\providecommand{\coloneqq}{\mathrel{\mathop:}=}
\newsavebox{\gwbox}\newlength{\gwidth}
\newcommand{\wrapminipage}[2][]{%
  \sbox{\gwbox}{#2}%
  \settowidth{\gwidth}{\usebox{\gwbox}}%
  \begin{minipage}[#1]{\gwidth}\usebox\gwbox\end{minipage}}
\newcommand{\gminipage}[2][]{%
  \wrapminipage{\includegraphics[#1]{#2}}}
\newcommand{\defdelim}[1]{\ifx\relax#1\relax\def\lsize{\left}\def\rsize{\right}\else\def\lsize{#1}\def\rsize{#1}\fi}
\newcommand{\floor}[2][]{\defdelim{#1}\lsize\lfloor#2\rsize\rfloor}
\newcommand{\ceil}[2][]{\defdelim{#1}\lsize\lceil#2\rsize\rceil}
\newcommand{\abs}[2][]{\defdelim{#1}\lsize\lvert#2\rsize\rvert}
\def\<#1>{\left\langle{#1}\right\rangle}
\DeclareMathOperator{\tr}{tr}
\newcommand{\E}[2][]{\operatorname{E}_{#1}\left[#2\right]}
\newcommand{\Var}{\operatorname{Var}}
\newcommand{\differentials}[3]{\;\prod_{i=#2}^{#3} \mathrm{d}{#1}_i}
\theoremstyle{plain}
\newtheorem{thm}{Theorem}
\newtheorem{lemma}{Lemma}
\newtheorem{cor}{Corollary}
\theoremstyle{definition}
\newtheorem{exam}{Example}
\theoremstyle{remark}
\newtheorem*{rem}{Remark}
\newcommand{\GOE}{\ensuremath{\mathrm{GOE}}}
\newcommand{\GSE}{\ensuremath{\mathrm{GSE}}}
\newcommand{\GBE}{\ensuremath{\mathrm{G}\beta\mathrm{E}}}
\newcommand{\CUE}{\ensuremath{\mathrm{CUE}}}
\newcommand{\GUE}{\ensuremath{\mathrm{GUE}}}
\newcommand{\LUE}{\ensuremath{\mathrm{LUE}}}
\newcommand{\LOE}{\ensuremath{\mathrm{LOE}}}
\newcommand{\herm}{\mathrm{H}}
\title{The Singular Values of the \GUE{} (Less is More)}
\author{Alan Edelman}
\address[Alan Edelman]{Department of Mathematics, Massachusetts Institute of
  Technology}
\email{edelman@mit.edu}
\author{Michael La$\,$Croix}
\address[Michael La$\,$Croix]{Department of Mathematics, Massachusetts Institute of Technology}
\email{malacroi@mit.edu}
\keywords{random matrices, GUE, anti-GUE, LUE, singular values,
  condition number, semicircle law, quarter-circle law}
\begin{document}
\begin{abstract}
  Some properties that nominally involve the eigenvalues of Gaussian
  Unitary Ensemble (\GUE{}) can instead be phrased in terms of
  singular values.  By discarding the signs of the eigenvalues, we
  gain access to a surprising decomposition: the singular values of
  the \GUE{} are distributed as the union of the singular values of
  two independent ensembles of Laguerre type.  
  This independence is remarkable given the well known phenomenon of
  eigenvalue repulsion.

  The structure of this decomposition reveals that several existing
  observations about large $n$ limits of the \GUE{} are in fact
  manifestations of phenomena that are already present for finite
  random matrices.  We relate the semicircle law to the quarter-circle
  law by connecting Hermite polynomials to generalized Laguerre
  polynomials with parameter $\pm1/2$.  Similarly, we write the
  absolute value of the determinant of the $n\times{}n$ \GUE{} as a
  product $n$ independent random variables to gain new insight into
  its asymptotic log-normality.  The decomposition also provides a
  description of the distribution of the smallest singular
  value of the \GUE{}, which in turn permits the study of the leading
  order behavior of the condition number of \GUE{} matrices.

  The study is motivated by questions involving the enumeration of
  orientable maps, and is related to questions involving powers of
  complex Ginibre matrices.  The inescapable conclusion of this work
  is that the singular values of the GUE play an unpredictably
  important role that had gone unnoticed for decades even though, in
  hindsight, so many clues had been around.
\end{abstract}

\maketitle

\section{Introduction}

This paper highlights some surprising interrelationships between
problems that involve singular values of \GUE{} random matrices.  By
discarding the signs of eigenvalues, we gain access to additional
structure, since despite the pairwise repulsion of its eigenvalues,
the singular values of the \GUE{} can be decomposed as the union of
two independent sets.  The decomposition is equivalent to a result of
Jackson and Visentin \cite{JV-Eulerian} from enumerative
combinatorics, and was previously reported by Forrester in
\cite[Sec.~2.2]{Forrester-Evenness}.  Our contribution is to consider
the decomposition as a complete result about singular values instead
of a specialized result about eigenvalues, and to note that this
single decomposition underlies several diverse phenomena.  From this
perspective, we can translate results about asymptotically large
matrices to the finite setting, and we can capitalize on the
independence to describe the determinant and extreme singular values
of the \GUE{}.

Several results, that we find individually surprising, are in fact hidden
facets of the same phenomenon.  Our aim is to expose these surprises
and the interconnections between them.
\begin{enumerate}
\item It is possible to partition the singular values of the \GUE{}
  into two statistically independent sets (stated in
  \cite{Forrester-Evenness} in terms of eigenvalues).  This stands
  in striking contrast, almost in contradiction with, the familiar
  fact that eigenvalues repel.
\item The logarithm of the absolute value of the determinant of the
  \GUE{} can be written as a sum of independent random variables
  (speculated as impossible by Tao and Vu in \cite{TV}).
\item Matrices of nominal half-integer size play a key role.
\item The decomposition is equivalent to a result from enumerative
  combinatorics that relates the cardinalities of two classes of
  orientable maps on surface of positive genus (\cite{JV-Eulerian}).
\item A bi-diagonal model for singular values gives all the moments of
  the \GUE{} determinant.
\item The bulk-scaling limit of the \GUE{} behaves as a superposition
  of two hard edges.  The first author has long since argued for the
  relatively obvious importance of the singular value view for
  Laguerre (Wishart) ensembles, and the less well known, but easy to
  recognize, generalized singular value view for Jacobi (MANOVA)
  ensembles (see the first author's course notes for course 18.337 at
  MIT).  The importance of a singular value view for the \GUE,
  however, is far more astonishing.
\end{enumerate}

Our approach is analogous to replacing a semicircle with a pair of
quarter-circles.  These curves occur as famous limiting distributions.
In particular, Wigner's semicircle law is the limiting distribution of
the eigenvalues of the (\GUE{}).  The Marchenko-Pastur distribution
similarly describes the limiting distribution for the singular values
of large rectangular random matrices.  In particular, Laguerre
ensemble singular values satisfy the quarter-circle law.  There is an
obvious geometric relationship between these distributions; a
semicircle is the union of two quarter-circles
(Figure~\ref{fig:semicircles}--top).  The semicircle and
quarter-circles also have a less obvious relationship: the semicircle
is symmetric about the $y$-axis, and its restriction to the first
quadrant is the average of two quarter-circles.  This second
relationship generalizes to matrices of finite size
(Figure~\ref{fig:semicircles}--right), with the quarter-circles
replaced by the distributions of singular values of rectangular
matrices of nominal half-integer size
(Figure~\ref{fig:semicircles}--bottom).  Variations of this second
relationship form the basis for this paper.

\begin{figure}
\begin{minipage}{.725\textwidth}
  \hspace*{\fill}\includegraphics[width=\textwidth]{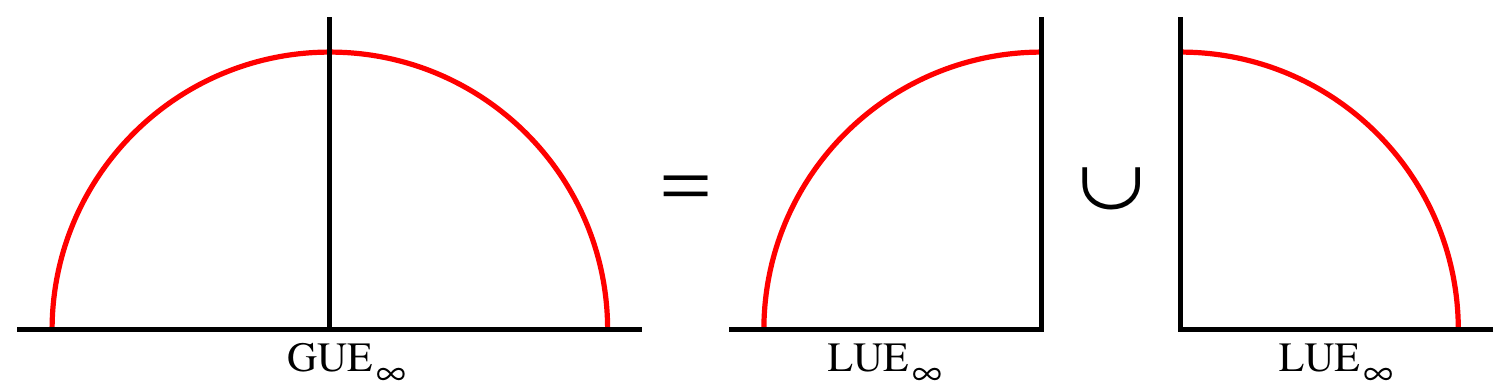}\hspace{\fill}\mbox{}\\[2ex]

  \hspace*{\fill}\includegraphics[width=\textwidth]{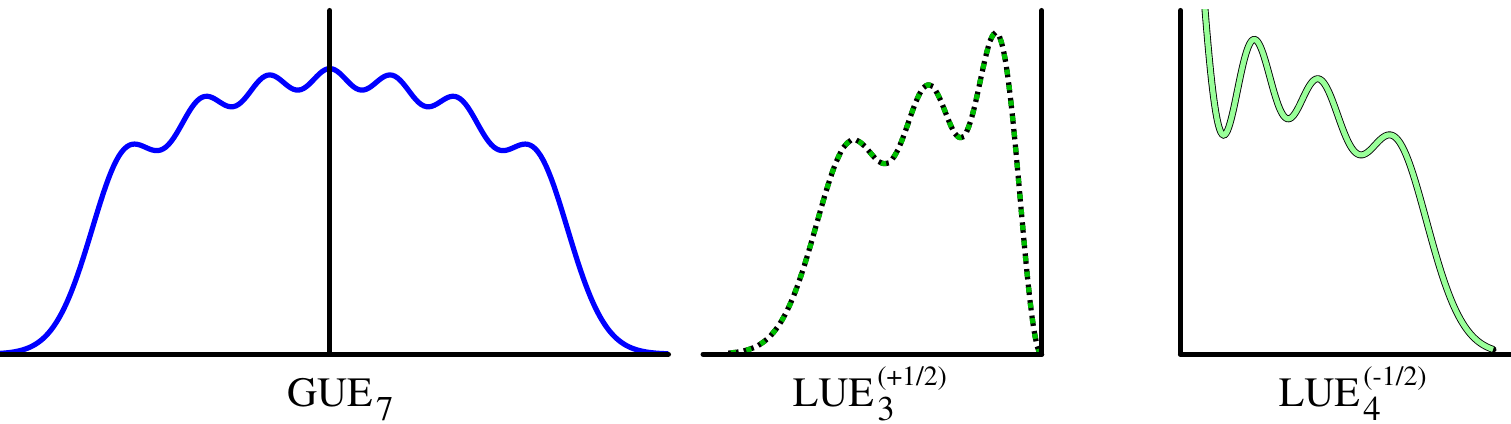}\hspace{\fill}\mbox{}
\end{minipage}\hspace{\fill}%
\begin{minipage}{.2083\textwidth}
  \includegraphics[width=\textwidth]{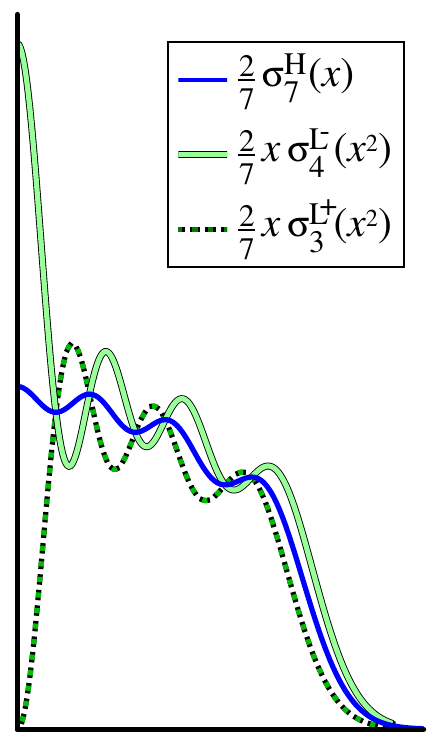}
\end{minipage}
\caption{A semicircle describes the limiting density of the
  eigenvalues of the \GUE{}.  It decomposes as two quarter-circles
  (\textcolor{BrickRed}{top}), related to the limiting densities of
  the singular values of rectangular matrices.  For finite matrices,
  the density of a random eigenvalue of the \GUE{}
  (\textcolor{blue}{bottom left}) is still described by a weighted
  average (right) of the densities of
  of the positive square roots of the eigenvalues from two \LUE{}s
  (\textcolor{OliveGreen}{bottom} \textcolor{OliveGreen}{center}).
}\label{fig:semicircles}
\end{figure}

Much of random matrix theory involves the behavior of eigenvalues of
asymptotically large matrices.  It is not always clear how such
phenomena correspond to finite matrices.  In this paper, we connect
the infinite to the finite by phrasing phenomena in terms of singular
values.  For Hermitian matrices, this amounts to considering the
magnitudes of eigenvalues and discarding their signs.  One might
assume that discarding signs limits the scope of possible conclusions,
but in practice several problems that are nominally about eigenvalues
are better analyzed in terms of singular values.  One could even argue
that existing results about the extreme eigenvalues of Laguerre and
Jacobi ensembles are elegant precisely because they are essentially
about singular values.
  
The change of setting becomes advantageous when we observe that the
singular values of the $n\times n$ \GUE{} exhibit an unexpected
decomposition: Theorem~\ref{thm:mainresult} shows that they are
distributed identically to the union of the distinct non-zero singular
values of two independent anti-\GUE{} ensembles (an anti-\GUE{} matrix
consists of purely imaginary Gaussian entries that are independently
distributed subject to skew-symmetry) one of order $n$, the other of
order $n+1$.  An equivalent result was previously observed by
Forrester in \cite[Sec.~2.2]{Forrester-Evenness} where it was stated
explicitly for the case that $n$ is even.  Since the eigenvalues of
the \GUE{} are readily seen to be pairwise dependent, the existence of
such a decomposition is itself somewhat surprising.

The decomposition allows us to analyze several statistics of the
\GUE{}, including the physically significant gap probability, in terms
of the anti-\GUE{}.  Ironically, most of the relevant facts about the
anti-\GUE{} can be found in Mehta's physically motivated text,
\cite[Ch. 13]{Mehta}, where his description is asserts that such
matrices have ``no immediate physical interest''.  After a change of
variables, the positive eigenvalues of the anti-\GUE{} are seen to
have distributions of Laguerre-type (Section~\ref{sec:antiGUE}),
corresponding to complex matrices with a half-integral dimension and
Laguerre parameter $\pm\frac{1}{2}$ (this is the $\beta=2$ case of a
more general analysis presented by Dumitriu and Forrester in
\cite{DuFo}).  It is thus possible to draw conclusions about the
\GUE{} from an understanding of corresponding facts about Laguerre
ensembles.  Physically significant existing results about level
densities, the absolute value of the determinant, the distributions of
the largest singular value, and the bulk-scaling limit can all be
analyzed using this framework.

As an unexpected consequence, we obtain the square of the determinant
of the $n\times n$ \GUE{} as a product of independent $\chi^2$ random
variables (Theorem~\ref{thm:absdetGUE}).  This is a direct analogue to
the result of Goodman for Wishart matrices \cite{Goodman}, and
precisely the form that Tao and Vu speculated did not exist
when discussing the log-normality of the absolute value of the
determinant of the \GUE{} in \cite{TV}.

In addition to providing a common framework for understanding existing
results about the \GUE{}, the decomposition permits a study of the
distribution of the smallest singular value of a matrix from the
ensemble.  This quantity may initially appear somewhat unnatural, but
for some applications it is an appropriate analog for the smallest
eigenvalue of Laguerre and Jacobi ensembles, in some ways behaving as
though governed by the existence of a virtual hard-edge.  The
distribution of the smallest singular value is also closely related to
the distribution of conditions numbers, and has implications for the
analysis of numerical stability of operations involving random
matrices.

The decomposition was first identified by the authors as part of an
attempt to find a combinatorial derivation for a functional identity,
given by Jackson and Visentin in \cite{JV-Characters}, between
generating series for two classes of orientable maps.  Physical
implications of their identity involve matrix models of 2-dimensional
gravity, and are discussed in \cite{JPV}.  They later generalized the
identity, in \cite{JV-Eulerian}, to a stronger form that is essentially
equivalent to the existence of our decomposition.  Their generating
series are effectively cumulant generating series for suitably scaled
ensembles of matrix eigenvalues, but Jackson and Visentin appear to
have been unaware of the random matrix interpretation of one of the
series, possibly because its direct interpretation involves a
half-integer evaluation of a parameter that nominally represents one
of the dimensions of a rectangular matrix of complex Gaussians.  While
their work required subtle manipulation of characters of the symmetric
group, we believe that the present proof is elementary and
enlightening from the perspective of random matrix theory, although a
combinatorial interpretation still remains elusive.

It should be noted that while the decomposition discussed here has
many superficial parallels with the ideas of superposition and
decimation superposition explored by Forrester and Rains
(\cite{FoRa-OUS, FoRa-Decimation}), the concepts are distinct,
although it is not difficult to imagine a more general setting in
which both their result and ours exist as special cases.

\subsection*{Outline}
The remainder of the paper has the following structure:
\begin{itemize}
\item Section~\ref{sec:definitions} describes the matrix
  ensembles we need to formulate the decomposition.
\item Section~\ref{sec:leveldensities} uses the level density of the
  \GUE{} as a warm-up exercise.
\item Section~\ref{sec:decomposition} demonstrates the decomposition.
  We also describes its equivalence to an identity of Jackson and
  Visentin, and discuses how the decomposition can be observed
  experimentally.
\item Section~\ref{sec:applications} applies the decomposition to
  provide a unified explanation to existing results.
\item Section~\ref{sec:Ginibre} relates the decomposition to
  properties of the complex Ginibre ensemble, and draws parallels to
  an earlier investigation by Rains of powers of compact Lie groups
  \cite{Rains-Powers, Rains-power-images}.
\item Finally, in Section~\ref{sec:future} we discuss some related questions
  for future work.
\end{itemize}

\section{The Ensembles}\label{sec:definitions}

\subsection*{Gaussian Unitary Ensembles}\label{sec:GUE}

The Gaussian Unitary Ensemble of order $n$, ($\GUE_{n}$), consists of
$n\times{}n$ Hermitian matrices invariant after conjugation by any
unitary matrix, and with entries that are normal, and independently
distributed, subject to Hermitian symmetry.  The ensemble is
completely defined by specifying the variance of the diagonal entries,
and we choose a normalization with diagonal entries standard normal.
As a consequence, the real and imaginary parts of the off-diagonal
entries are independently normal with mean $0$ and variance $\frac12$.
The ensemble can be sampled as $A=\frac{1}{2}(G+G^{\herm})$, where the
real and imaginary parts of the entries of the $n\times{}n$ matrix $G$
are independently standard normal, and $G^{\herm}$ denotes the
Hermitian conjugate of $G$.

\begin{rem}
  It is also common to work with a normalization where real and
  imaginary parts of the off-diagonal entries are standard normal, as
  in \cite{Mehta, MeNo}, or where the variance depends on $n$ (when
  the primary concern is taking large-$n$ limits).  Our choice is
  motivated by combinatorial considerations from the map enumeration
  setting studied by Jackson and Visentin (\cite{JV-Eulerian}), and
  provides the property that for every partition
  $\theta=(\theta_1,\theta_2,\dotsc,\theta_k)$, the moment
  $m_\theta(n)=\E[\GUE_n]{\prod_{i=1}^k\tr(M^{\theta_i})}$ is a
  polynomial in $n$ with non-negative integer coefficients depending
  only on $\theta$.  A convenient consequence of this normalization is
  that $\E[\GUE_n]{\det(M^{2k})}$ is a product of odd integers for
  every $n$ and $k$ (see Theorem~\ref{thm:absdetGUE}).
\end{rem}

An element of the \GUE{} has real eigenvalues, so the distribution on
the matrices induces a distribution on $n$-tuples of eigenvalues.  The
joint density function for this distribution on $\mathbb{R}^n$, is
\begin{equation}\label{eqn:GUEdensity}
  p_n^{\mathrm{H}}(x_1,x_2,\dotsc,x_n) =
  c_{n}^{\mathrm{H}}{\prod_{1\leq i<j\leq{}n}\abs{x_i-x_j}^2}
  \;\exp\Big(-\frac12\sum_{i=1}^n x_i^2\Big)
  \differentials{x}{1}{n}
\end{equation}
where $c_{n}^{\mathrm{H}}$ is such that the density defines a
probability measure.  A thorough discussion of the \GUE{} is given by
Mehta in \cite{Mehta}, though with a different choice of
normalization.  It is convenient to consider the density as consisting
of two factors: the Vandermonde squared factor,
$\prod_{1\leq{}i<j\leq{}n}\abs{x_i-x_j}^2$, occurs because the
ensemble is unitarily invariant, while the second factor,
$\exp\big(-\frac12\sum_{i=1}^n x_i^2\big)$, is associated to the
Hermite weight in the study of orthogonal polynomials (explaining the
use of `$\mathrm{H}$' in our notation), and occurs because the density
of a matrix $M$ is proportional to $\exp\big(-\frac12\tr(M^2)\big)$.

\begin{rem}
  It is also common to consider the \GUE{} in terms of a density on
  sets of eigenvalues, and thus use a density that is supported only
  on $x_1\leq{}x_2\leq\dotsm\leq{}x_n$.  For the present purposes, we
  prefer to have a density function that is invariant under
  permutation of its arguments, and so we consider a density on
  $n$-tuples constructed by randomly permuting the eigenvalues.  The
  two approaches are not substantially different, but would manifest
  as a factor of $n!$ if $c_n^{\mathrm{H}}$ were to be stated
  explicitly.  In Section~\ref{sec:decomposition} we will consider an
  alternate density on $n$-tuples that induces the same density on
  sets.
\end{rem}

\subsection*{Laguerre Unitary Ensembles}\label{sec:Laguerre}

The Laguerre Unitary Ensembles (\LUE) are a two-parameter family of
distributions on positive definite Hermitian matrices.  The parameter
$n$ corresponds to the order of the matrix, while the parameter $a$
determines the shape of the distribution.  In contrast to the \GUE{}
which traditionally found applications arising in physics, the \LUE{}
are more closely associated with statistics where the relevant
matrices are often referred to as Wishart matrices.  Many statistical
applications of the \LOE{}, an analogous ensemble based on real
instead of complex matrix entries, can be found in \cite{Muirhead},
and much of the commentary there applies to the \LUE{} with minor
modifications.  There are two related models of the \LUE{}: one model
applies when $n+a$ is a positive integer, while a second model applies
when $a+1$ is a positive real number.

When $n+a$ is a positive integer, the ensemble can be sampled as
$W=AA^\herm$, where $A$ is an $n\times{}(n+a)$ matrix of independent
complex Gaussian entries.  The spectrum of $A$ is completely
determined by $A$, and it is often more natural to work with the
singular values of $A$ than with the eigenvalues of $W$.  By
convention we will consider centered matrix entries, with equally
distributed real and imaginary parts chosen such that
$\E{A_{ij}\overline{A_{ij}}}=2$, so that the real and imaginary parts
of each entry are independent standard normal.  With this
normalization, the joint density for the eigenvalues of the \LUE{} on
$[0,\infty)^n$, is
\begin{equation}\label{eqn:laguerredensity}
p_{n,a}^{\mathrm{L}}(x_1,x_2,\dotsc,x_n)=
c_{n,a}^{\mathrm{L}}  \prod_{i=1}^{n}{x_i^{a}
  \prod_{1\leq i<j\leq n}\abs{x_i-x_j}^2}\,
  \exp\big(-\frac{1}{2}\sum_{i=1}^n x_i\big)
  \differentials{x}{1}{n}.
\end{equation}
In fact \eqref{eqn:laguerredensity} continues to define a probability
density for non-integral $a>-1$, and the densities are
realized when $A$ is bi-diagonal with its non-zero entries
independently $\chi$-distributed according to
\begin{equation}\label{eqn:bi-diagonalform}
  A\sim\begin{pmatrix} \chi_{2(n+a)} \\ \chi_{2(n-1)} &
  \chi_{2(n+a-1)} \\ & \chi_{2(n-2)} & \chi_{2(n+a-2)} \\ 
  & \makebox[0pt]{\hspace*{5em}$\ddots$} & \makebox[0pt]{\hspace*{4em}$\ddots$} \\ 
  & & \chi_2 & \chi_{2(a+1)}
  \end{pmatrix}.
\end{equation}
The correctness of this model for integer values of $a$ is verified by
considering the effect of applying Householder reflections to a matrix
$A$ of complex Gaussians, and can be seen to extend to non-integral
$a$ via the fact that the moments of \eqref{eqn:laguerredensity} must
depend polynomially on $a$.  A complete derivation of the bi-diagonal
model for Laguerre ensembles is given in a more general setting in
\cite{DuEd}.  In the present paper, we will be primarily interested in
Laguerre ensembles for which $a=\pm\frac12$ and their relationship to
anti-\GUE{} matrices of even and odd order, although ensembles
corresponding to arbitrary values of $a$ are closely related to the
combinatorics in \cite{JV-Eulerian} that motivated the present study.

\begin{rem}
  As with the \GUE{}, moments of the \LUE{} can be interpreted
  combinatorially.  Taking $m=n+a$, the moments
  $m_\theta(n,m)=\E[\LUE_n^{(a)}]{\prod_{i=1}^k\tr(M^{\theta_i})}$ are
  each polynomials in $m$ and $n$ with non-negative integer
  coefficients and are symmetric in $m$ and $n$.  These coefficients
  are related to the enumeration of hypermaps and associated with the
  generating series discussed in \cite{JV-Eulerian}, though a direct
  interpretation of the combinatorial results there requires the
  alternate normalization $\E{A_{ij}\overline{A_{ij}}}=1$.
\end{rem}

\subsection*{Anti-GUE}\label{sec:antiGUE}

The anti-\GUE{} consists of anti-symmetric Hermitian matrices with
independent (subject to anti-symmetric) normal entries.  Such matrices
were identified by Mehta as having a particularly elegant theory, with
no immediate applications to physics \cite[Ch.~13]{Mehta}.  Every such
matrix is of the form $M=iK$, where $K$ is a real skew-symmetric
matrix.  Such a matrix is unitarily diagonalizable, so its singular
values are the absolute values of its eigenvalues.  Since the
characteristic polynomial of $K$ has real coefficients, its
eigenvalues occur in complex conjugate pairs, and it follows that the
eigenvalues of $M$ occur in plus/minus pairs, so each non-zero
singular values occurs with even multiplicity.  If $M$ is $N\times{}N$
for $N=2n+r$ with $r\in\{0,1\}$, then except on a set of measure zero,
$M$ has $n$ distinct non-zero singular values, which we can denote by
$\theta_1,\theta_2,\dotsc,\theta_n$.  When the imaginary parts of the
entries of $M$ are distributed as independent standard Gaussians (up
to Hermitian symmetry), the joint probability density function for the
distinct singular values of $M$ (in this case also the positive
eigenvalues), supported on $[0,\infty)^n$, is given by
\begin{equation}\label{eqn:skewdensity}
  p^{\mathrm{aG}}_N(\theta_1,\theta_2,\dotsc,\theta_n)=
  c^{\mathrm{aG}}_{N}\prod_{j=1}^n\theta_j^{2r}\,\prod_{1\leq j<k\leq n}
  \left(\theta_j^2-\theta_k^2\right)^2
  \,\exp\big(-\frac{1}{2}\sum_{j=1}^n\theta_j^2\big)
    \differentials{\theta}{1}{n},
\end{equation}
an expression that combines the two cases described in
\cite[Section~3.4]{Mehta} or \cite[Ex~1.3~q.5]{Forrester-Log-gases}
after accounting for the differing choice of normalization.  Key to
the existence of the decomposition in Theorem~\ref{thm:mainresult} is
that the final factor,
$\exp\big(-\frac{1}{2}\sum_{j=1}^n\theta_j^2\big)$, which is common to
both this density and the \GUE{} density, is a symmetric product of
even functions.

For completeness, we will outline how this density can be derived from
the Laguerre density, \eqref{eqn:laguerredensity}, by establishing the
existence of a bi-diagonal model for the singular values.  This
follows closely one of the approaches used by Dumitriu and Forrester
in \cite{DuFo}, where several other derivations are also presented.
By applying a sequence of orthogonal Householder transformation to
$M$, it is seen to have the same eigenvalue distribution as the
tri-diagonal anti-symmetric matrix
\[
\mathrm{i}\begin{pmatrix}
  0 & \chi_{N-1} \\
  -\chi_{N-1} & 0 & \chi_{N-2} \\
  & -\chi_{N-2} & 0 & \chi_{N-3} \\
  & & -\chi_{N-3} & 0 & \ddots \\
  & & & \ddots & \ddots & \chi_1 \\
  &&&& -\chi_1 & 0
\end{pmatrix} ,
\]
which by simultaneously permuting rows and columns is orthogonally
similar to a matrix of the form $\mathrm{i}\begin{pmatrix} 0 & A \\
  -A^T & 0\end{pmatrix}$, where depending on the parity of $N$,
\[
    A_{N_{\text{odd}}} \sim
    \begin{pmatrix}
      \chi_{N-1} & \chi_{N-2} \\ & \chi_{N-3} & \chi_{N-4} \\ & &
      \ddots & \ddots \\ & & & \chi_{2} & \chi_{1}
    \end{pmatrix}
  \quad \text{or} \quad
    A_{N_{\text{even}}} \sim
    \begin{pmatrix}
      \chi_{N-1} & \chi_{N-2} \\
      & \chi_{N-3} & \chi_{N-4} \\
      & & \ddots & \ddots \\
      & & & \chi_3 & \chi_2 \\
      & & & & \chi_{1} 
    \end{pmatrix}.
\]
Despite the differing form for even and odd $N$, for many purposes
these bi-diagonal should be considered as comprising a one-parameter
family.  Their moments, for example, can be seen to linked, and to
depend polynomially on $N$.  Figure~\ref{fig:antiGUEstaircase}
emphasizes the uniformity by illustrating how each bi-diagonal matrix
is obtained from one of lower order by adding a single additional
non-zero matrix element.  Notice that when $N$ is odd, the matrix
$A_N$ is not square.

\begin{figure}
\hspace*{\fill}\includegraphics[width=.9\textwidth]{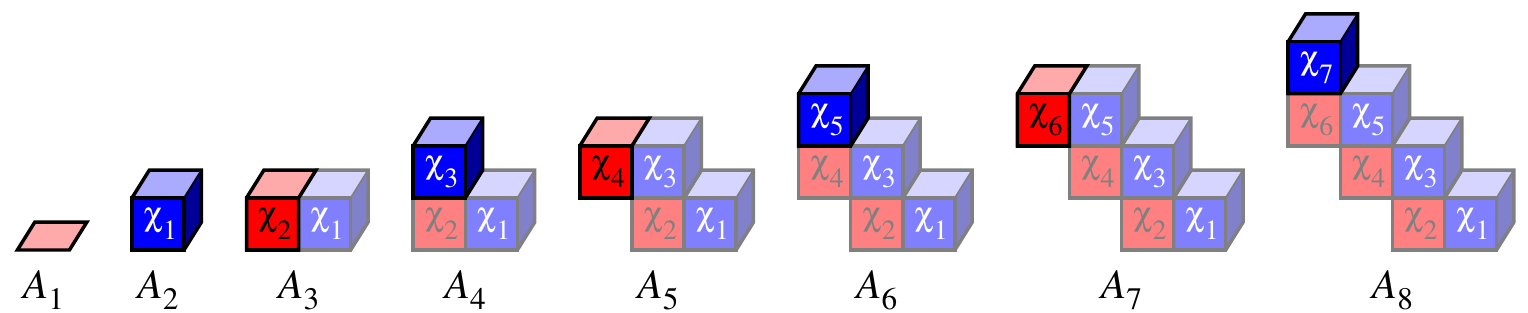}\hspace{\fill}\mbox{}
\caption{The bi-diagonal models for the positive eigenvalues of the
  anti-\GUE{} form a one-parameter family, where each is obtained from
  the previous by adding a single non-zero matrix
  element.}\label{fig:antiGUEstaircase}
\end{figure}

When $N$ is even, the matrix $A_{N_\text{even}}$ is the transpose of
the Laguerre form from \eqref{eqn:bi-diagonalform}, with $a=-\frac12$.
For odd values of $N$, the singular values of $A_{N_\text{odd}}$ can
also be seen to be Laguerre distributed, in this case with
$a=\frac12$, by noting that
\[
A_{N_{\text{odd}}}\sim\begin{pmatrix} \chi_{N-1} & \chi_{N-2} \\ &
  \chi_{N-3} & \chi_{N-4} \\ & & \ddots & \ddots \\ & & & \chi_{2} &
  \chi_{1}
\end{pmatrix}
\quad\text{and}\quad B_{N_\text{odd}}\sim\begin{pmatrix}
  \chi_{N} & \chi_{N-3} \\
  & \chi_{N-2} & \chi_{N-5} \\
  & & \ddots & \ddots \\
  &&& \chi_5 & \chi_2 \\
  &&& & \chi_3
\end{pmatrix}
\]
have identically distributed singular values.  Dumitriu and Forrester
\cite[Claim~6.5]{DuFo} demonstrated this equivalence by noting that
$B_{N_\text{odd}}$ describes the distribution of the Cholesky factor
of $A^\mathrm{T}A$.  The following lemma can be used to establish the
same claim while working directly with $A_{N_\text{odd}}$ and
$B_{N_\text{odd}}$, potentially avoiding numerical pitfalls
associated with constructing $A^\mathrm{T}A$.

\begin{lemma}\label{lem:chilem}
  If $A=\begin{pmatrix}W&0\\X&Y\end{pmatrix}$ has independent entries
  with $W\sim\chi_{r+s}$, $X\sim\chi_r$, and $Y\sim\chi_s$, and $Q$ is
  the reflection matrix
  $Q=\dfrac{1}{\sqrt{X^2+Y^2}}\begin{pmatrix}X&Y\\Y&-X\end{pmatrix}$,
  then $AQ=\begin{pmatrix}T&U\\V&0\end{pmatrix}$ has independent
  entries distributed as $T\sim\chi_r$, $U\sim\chi_s$, and
  $V\sim\chi_{r+s}$.
\end{lemma}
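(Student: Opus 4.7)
My plan is to multiply out $AQ$ explicitly and then recognise the three nonzero entries as the output of two successive applications of the classical gamma-beta independence identity. The first step is the purely algebraic verification that
\[
AQ = \begin{pmatrix} WX/\sqrt{X^2+Y^2} & WY/\sqrt{X^2+Y^2} \\ \sqrt{X^2+Y^2} & 0 \end{pmatrix},
\]
which immediately identifies $T=WX/\sqrt{X^2+Y^2}$, $U=WY/\sqrt{X^2+Y^2}$, and $V=\sqrt{X^2+Y^2}$, and confirms the vanishing $(2,2)$ entry. A quick check that $Q^{T}Q=I$ shows $Q$ is a reflection, which is the only hope for degrees of freedom to match ($T^2+U^2+V^2=W^2+X^2+Y^2$).

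For the distributional content, I would first dispense with $V$: since $X^2\sim\chi_r^2$ and $Y^2\sim\chi_s^2$ are independent, their sum $V^2=X^2+Y^2\sim\chi_{r+s}^2$, so $V\sim\chi_{r+s}$. The classical gamma-beta decomposition further asserts that $V^2$ is independent of the ratio $B:=X^2/(X^2+Y^2)$, and that $B\sim\mathrm{Beta}(r/2,s/2)$. Because $W$ is independent of the pair $(X,Y)$, it is in particular independent of $(V,B)$, so $W^2\sim\chi_{r+s}^2$ and $B\sim\mathrm{Beta}(r/2,s/2)$ are independent.

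The second step is to rewrite $T^2=W^2B$ and $U^2=W^2(1-B)$ and invoke gamma-beta independence in the reverse direction: when a $\chi_{r+s}^2$ variable is multiplied by an independent $\mathrm{Beta}(r/2,s/2)$ variable and by its complement, the two products are independent $\chi_r^2$ and $\chi_s^2$ random variables. This yields $T\sim\chi_r$, $U\sim\chi_s$, and independence of $T$ from $U$. Finally, $(T,U)$ is a measurable function of $(W,B)$, which is independent of $V$, so the full triple $(T,U,V)$ is mutually independent with the stated marginals.

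The only real obstacle is organisational: keeping track of which pair is independent of which at each step, since the independence of $V$ from $B$ comes from $(X,Y)$ alone, while the independence of $W^2$ from $B$ comes from the additional input $W$. A brute-force Jacobian change of variables from $(W,X,Y)$ to $(T,U,V)$ would also work and merely requires computing a $3\times 3$ determinant, but the gamma-beta route makes the structure transparent and avoids that calculation entirely.
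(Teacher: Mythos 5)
Your proof is correct. Both you and the paper begin identically: multiply out $AQ$ to get $T=WX/\sqrt{X^2+Y^2}$, $U=WY/\sqrt{X^2+Y^2}$, $V=\sqrt{X^2+Y^2}$, and then reformulate the claim as the statement that $\bigl(X^2+Y^2,\,\tfrac{W^2X^2}{X^2+Y^2},\,\tfrac{W^2Y^2}{X^2+Y^2}\bigr)\sim(\chi^2_{r+s},\chi^2_r,\chi^2_s)$ with mutual independence. Where you diverge is in how that equivalent statement is established: the paper simply asserts it follows ``by a change of variables in appropriate joint probability density functions,'' i.e.\ a direct $3\times 3$ Jacobian computation, whereas you decompose the claim into two applications of the classical gamma--beta independence theorem --- first $(X^2,Y^2)\mapsto(V^2,B)$ with $V^2\perp B$, then $(W^2,B)\mapsto(W^2B,W^2(1-B))$ in reverse. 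Your route buys structural transparency: each factor of independence is traced to a named classical fact, and no Jacobian determinant needs to be computed or verified. The one subtlety you rightly flag and handle --- that $(W,B)$ is jointly independent of $V$, not just marginally so --- is the crux of making the final triple mutually independent; you reduce it correctly to $W\perp(X,Y)$ together with $B\perp V$. The paper's approach is shorter on the page but pushes all the work into an unwritten integral; yours is longer but checkable line by line without calculation.
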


\begin{proof}
  This is equivalent to the more familiar fact that if $W^2$, $X^2$,
  and $Y^2$ are independent with
  $(W^2,X^2,Y^2)\sim(\chi^2_{r+s},\chi^2_{r},\chi^2_{s})$, then
  $X^2+Y^2$, $\frac{W^2X^2}{X^2+Y^2}$, and $\frac{W^2Y^2}{X^2+Y^2}$
  are also independent and distributed as
  $\left(X^2+Y^2,\frac{W^2X^2}{X^2+Y^2},\frac{W^2Y^2}{X^2+Y^2}\right)\sim(\chi^2_{r+s},\chi^2_{r},\chi^2_{s})$.
  This is established by a change of variables in appropriate
  joint probability density functions.
\end{proof}

By iteratively applying the lemma, a matrix distributed as
$A_{N_{\text{odd}}}$ can be orthogonally transformed into one
distributed as $[B_{N_\text{odd}}\mid 0]$ via a sequence of orthogonal
matrices that act on two columns at a time.  Subsequently dropping the
column of zeros does not alter the singular values.  In particular,
the lemma gives a constructive method for sampling $B_{N_\text{odd}}$
from a sample of $A_{N_\text{odd}}$.  Figure~\ref{fig:staircaseequiv}
illustrates the equivalence schematically for $N=7$.

\begin{figure}[!htbp]
\includegraphics[width=\textwidth]{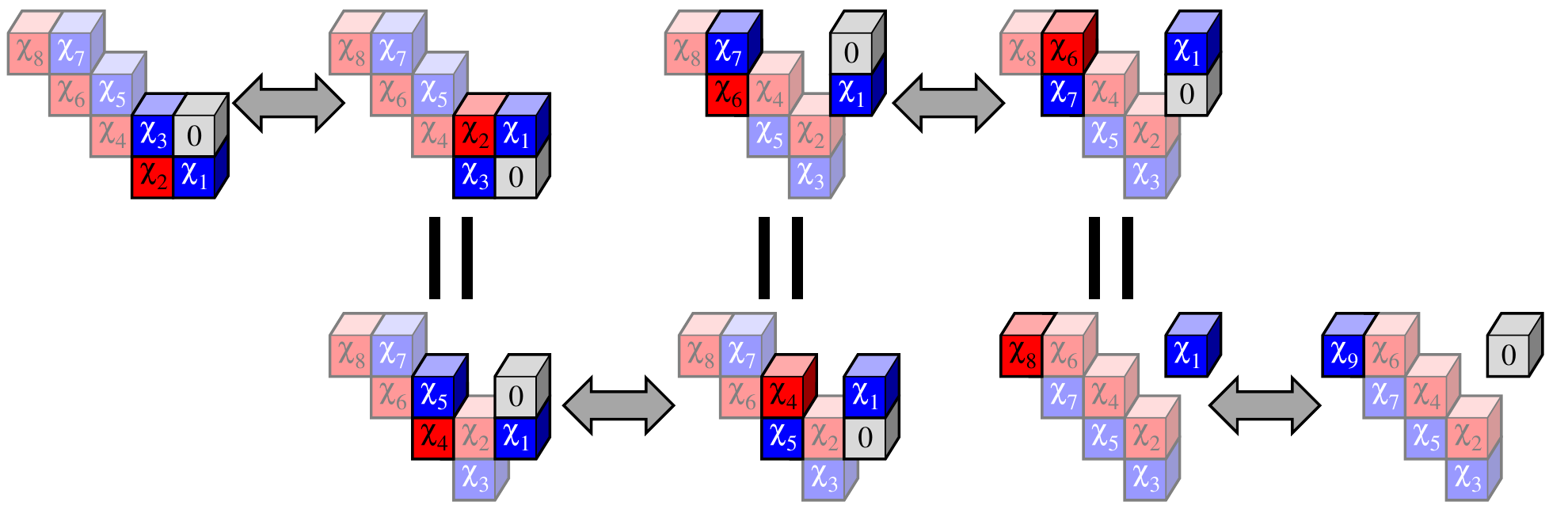}
\caption{Four orthogonal transformations (gray arrows)
  act on two columns at a time to transform a matrix distributed as
  $A_7$ into one distributed as $B_7$.%
}\label{fig:staircaseequiv}
\end{figure}

\begin{rem}
  Heuristically, the equivalence between the singular value
  distributions of $A_{N_{\text{odd}}}$ and $B_{N}$ can be anticipated by
  considering the effect of applying Householder reflections to
  bi-diagonalize a hypothetical complex random matrix with fractional
  size, namely $\frac{n-1}{2}\times{}\frac{n}{2}$.  Beginning the
  process by reducing the first column and then alternating between
  rows and columns produces the first distribution, while starting
  with the first row produces the second distribution.
\end{rem}

In both the cases of even $N$ and odd $N$, the singular values of an
anti-\GUE{} matrix are the singular values of a bi-diagonal matrix of
Laguerre type (Figure~\ref{fig:staircase-Laguerre}), and the
probability density function \eqref{eqn:skewdensity} follows from
\eqref{eqn:laguerredensity} after a change of variable, taking
$\theta_j^2=x_j$ and thus
$2\,\mathrm{d}\theta_j=x^{-1/2}\,\mathrm{d}x_j$, with additional
factors of $2$ absorbed into $c_n^{\mathrm{aG}}$.

\begin{figure}
\hspace*{\fill}\includegraphics[width=.9\textwidth]{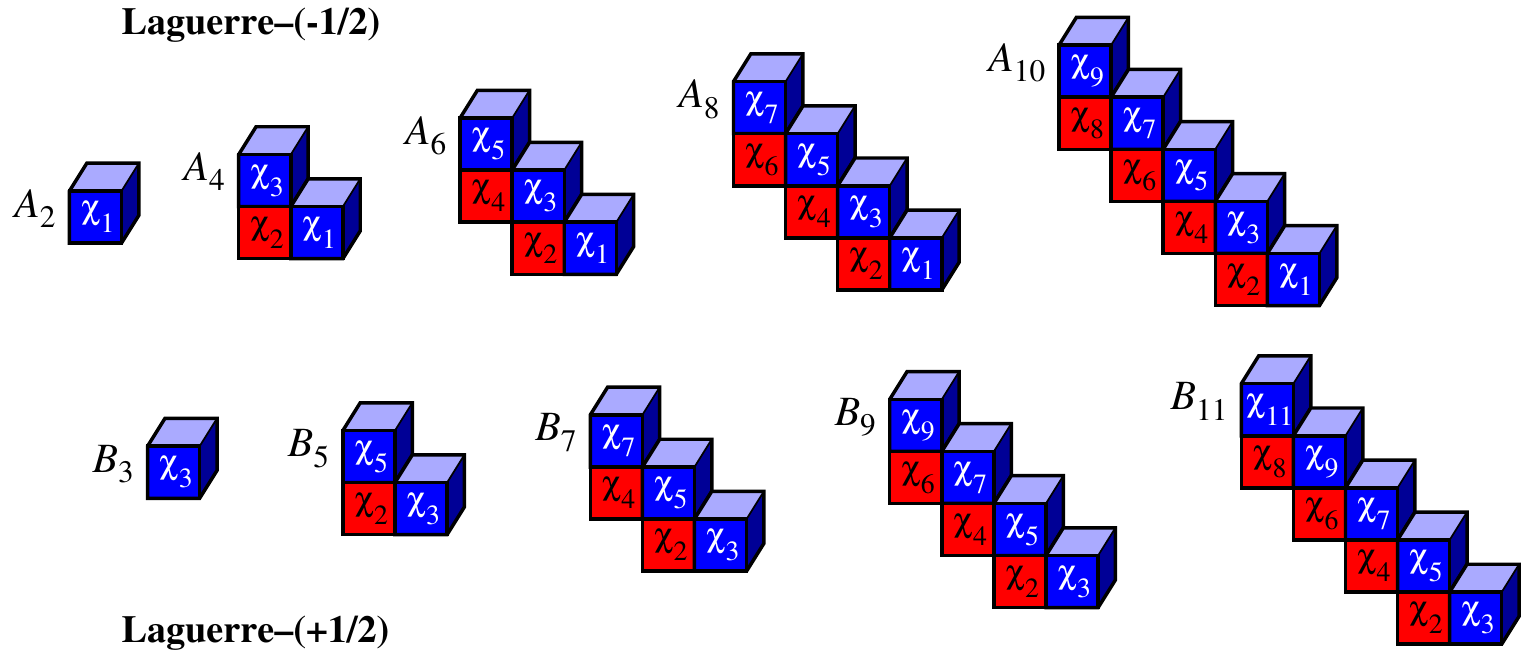}\hspace{\fill}\mbox{}
\caption{The positive eigenvalues of even and odd order anti-\GUE{}
  matrices are modeled by two different families of square bi-diagonal
  matrices of Laguerre type.}\label{fig:staircase-Laguerre}
\end{figure}

\begin{rem}
  It can also be advantageous to view the equivalence between the
  anti-\GUE{} and Laguerre ensembles from the opposite perspective.
  In particular, the relationship formalizes a sense in which the
  ensembles $\{\LUE_{k}^{(+1/2)}\}_{k=1}^{\infty}$ and
  $\{\LUE_{k}^{(-1/2)}\}_{k=1}^{\infty}$ are naturally part of a
  single one-parameter family.  In particular, the moments of
  $\LUE_n^{(1/2)}$ and $\LUE_n^{(-1/2)}$ share the same polynomial
  dependence on $n$, with each evaluated at half-integers relative to
  the other.  This matches our intuition that for the purpose of
  considering singular values, the dimensions of a rectangular matrix
  should be interchangeable, so that both $\LUE_{3}^{(+1/2)}$ and a
  hypothetical $\LUE_{3.5}^{(-1/2)}$ should involve the singular
  values of a nominal $3\times3.5$ matrix.
\end{rem}

\goodbreak

\section{Warm-up:  the level densities of the \GUE{} and the
  Semicircle Law}\label{sec:leveldensities}

Before proceeding to the general setting, we examine more closely the
motivating problem.  How is the semicircle from the \GUE{} related to
the quarter-circles describing singular values of bi-diagonal matrices
of Laguerre type?  What is the analogous relationship for matrices of
finite size?
%
%
By dropping limits, and using orthogonal polynomials to represent
relevant probability densities associated with finite random matrices,
we see that the semicircle associated with the \GUE{} emerges from an
average of two quarter-circles.

For a distribution on $n$-sets, the $m$-point correlation function,
$\sigma_n(x_1,x_2,\dotsc,x_m)$ describes the induced distribution on
uniformly selected subsets of size $m\leq{}n$.  By convention,
$\sigma_n(x_1,x_2,\dotsc,x_m)$ is not a probability distribution, but
is instead normalized such that
\[
\int_{\mathbb{R}^m}\sigma_n(x_1,x_2,\dotsc,x_m)\,\mathrm{d}x_1\dotsb\mathrm{d}x_m=\frac{n!}{(n-m)!}.
\]
Conceptually, when the underlying random process generates a single
unordered $n$-set, it can be thought of as producing
$m!\binom{n}{m}=\frac{n!}{(n-m)!}$ corresponding ordered $m$-tuples.
We will be interested primarily in $\frac{1}{n}\sigma_n(x)$, which
describes the pdf of a uniformly selected $1$-set.  When the
distribution on the $n$-sets takes the form
\[
p_n(x_1,x_2,\dotsc,x_n)=c_n\prod_{1\leq{}i<j\leq{}n}(x_i-x_j)^2\prod_{i=1}^n{}w(x_i)\,\mathrm{d}x_i,
\]
as with the \GUE{} ($w(x)=\mathrm{e}^{-x^2/2}$) and the \LUE{}
($w(x)=x^a\,\mathrm{e}^{-x/2}$), the $m$-point correlation function,
$\sigma_n(x_1,x_2,\dotsc,x_m)$, is given by an $m\times{}m$
determinant
\begin{equation}\label{eqn:m-pointcorrelation}
  \begin{split}
    \sigma_n(x_1,x_2,\dotsc,x_m)
    &=c_n\frac{n!}{(n-m)!}\int\prod_{1\leq{}i<j\leq{}n}(x_i-x_j)^2\prod_{i=1}^nw(x_i)\,
    \mathrm{d}x_{m+1}\mathrm{d}x_{m+2}\dotsb\mathrm{d}x_{n}\\ 
    &=\det\left(K_2(x_i,x_j)\right)_{1\leq{}i,j\leq{}m}
  \end{split}
\end{equation}
where
$K(x,y)=\sqrt{w(x)w(y)}\sum_{j=0}^{n-1}\varphi_j(x)\varphi_j(y)$, and
$\{\varphi_{j}(x)\colon j\geq0\}$ are orthonormal polynomials
associated with the weight $w(x)$ such that $\varphi_j(x)$ has degree
$j$ and $\int\varphi_i(x)\varphi_j(x)w(x)\,\mathrm{d}x=\delta_{i,j}$.
This result is based on the fact that the Vandermonde matrix can be
expanded in terms of any monic polynomials, and the resulting
integrals can be evaluated column by column, and can be conceptualized
as a generalization of the Cauchy-Binet formula to matrices of
continuous dimension.  A more complete discussion can be found, for
example, in \cite[Sec.~5.4]{Deift-OP} or \cite[Ch.~5]{Mehta}. 

For the \GUE{}, $w(x)=\mathrm{e}^{-x^2/2}$, and the functions
$\varphi$ are related to probabilists' Hermite polynomials described
by the initial conditions $H_0(x)=1$ and $H_1(x)=x$, and by the
$3$-term recurrence $H_{k+1}(x)=xH_k(x)-kH_{k-1}(x)$ for $k\geq1$.
%
%
It follows from the evaluation
\[
\int_{-\infty}^{\infty}H_i(x)H_j(x)\,\mathrm{e}^{-x^2/2}\;\mathrm{d}x=\delta_{i,j}\,n!\sqrt{2\pi},
\] 
that the level density, describing the probability density function
for the distribution of a single eigenvalue selected uniformly from
the eigenvalues of the order $n$ \GUE{} is given by
\begin{equation}\label{eqn:HermLD}
\frac{1}{n}\sigma_n^\mathrm{H}(x)
=\frac{1}{n\sqrt{2\pi}}\sum_{k=0}^{n-1}\frac{H_k(x)^2}{k!}\,\mathrm{e}^{-x^2/2}.
\end{equation}
Applying the Christoffel-Darboux formula to the sum provides the
compact representation
\[
\frac{1}{n}\sigma_n^\mathrm{H}(x)=\frac{1}{n!\sqrt{2\pi}}\big(H_n(x)^2-H_{n-1}(x)H_{n+1}(x)\big)\,\mathrm{e}^{-x^2/2},
\]
from which the eponymous semicircle law can then be recovered using
asymptotic properties of Hermite polynomials, as in
\cite[Appendix~A.9]{Mehta}.  Figure~\ref{fig:HermPlot} shows the level
density for the $7\times7$ \GUE{} as an approximation of a semicircle.
The relationship to \LUE{}s and the quarter-circle law will be
observed by considering the even and odd terms of the summand in
\eqref{eqn:HermLD} separately.

\begin{figure}
\hspace*{\fill}\includegraphics{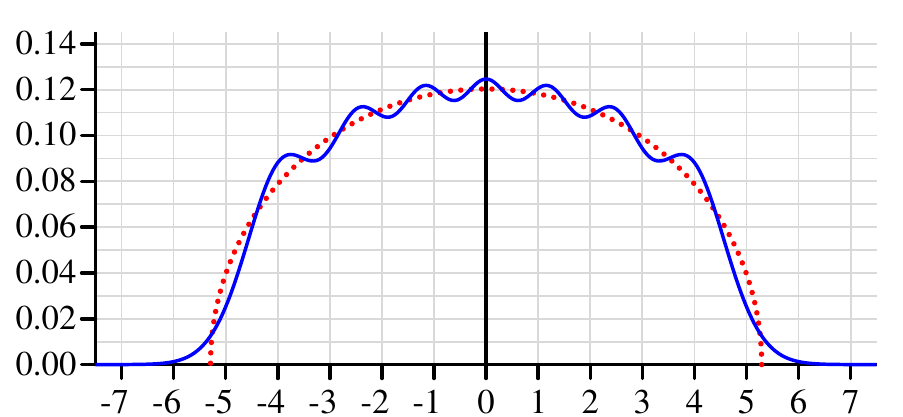}\hspace{\fill}\mbox{}
\caption{A semi-ellipse (\textcolor{BrickRed}{red}), with equation
  $y=\frac{1}{\sqrt{7}\,\pi}{\sqrt{1-\frac{x^2}{4\cdot7}}}$, is
  approximated by the probability density function for the
  distributions of a uniformly selected eigenvalue of the $7\times7$
  \GUE{} (\textcolor{blue}{blue}) as given by
  $\frac{1}{7}\sigma_7(x)
=\frac{1}{7\sqrt{2\pi}}\sum_{k=0}^{6}\frac{H_k(x)^2}{k!}\mathrm{e}^{-x^2/2}$.
}
\label{fig:HermPlot}
\end{figure}

For the \LUE{}, $w(x)=x^a\,\mathrm{e}^{-x/2}$, and the relevant
$\varphi$ can be expressed in terms of
$\{L_k^{(a)}(x)\colon{}k\geq0\}$, the generalized Laguerre polynomials
of parameter $a$, which satisfy
\begin{align*}
  \int_0^{\infty} L_i^{(a)}(x)L_j^{(a)}(x)\,
  x^{a}\mathrm{e}^{-x}\,\mathrm{d}x = \frac{\Gamma(j+a+1)}{j!} \delta_{i,j}.
\end{align*}
Although not required in the present context, it is convenient to note
that the Laguerre polynomials are given explicitly by
$L^{(a)}_n(x)=\sum_{i=0}^n(-1)^i\binom{n+a}{n-i}\frac{x^i}{i!}$.  The
weight function for the Laguerre polynomials differs by a factor of
$2$ in the exponential from our normalization of the \LUE{}, and this
is the source of the rescaled parameters in the subsequent formulae.
When $a$ is a half-integer, we can write $\Gamma(j+a+1)$ in terms of
factorials, and obtain
\begin{align*}
\sigma_{n}^{\mathrm{L}^-}(x)&=
          \frac{1}{\sqrt{2\pi}}
          \sum_{k=0}^{n-1}\frac{4^kk!^2}{(2k)!}
          \Big[L_k^{(-1/2)}\big(\frac{x}{2}\big)\Big]^2\frac{1}{\sqrt{x}}\,\mathrm{e}^{-x/2}\\
\sigma_{n}^{\mathrm{L}^+}(x)&=
          \frac{1}{\sqrt{2\pi}}
          \sum_{k=0}^{n-1}\frac{4^kk!^2}{(2k+1)!}
          \Big[L_k^{(+1/2)}\big(\frac{x}{2}\big)\Big]^2\sqrt{x}\,\mathrm{e}^{-x/2},
\end{align*}
corresponding to $a=-1/2$ and $a=+1/2$, with both functions supported
on the positive real axis.  The pdf for the distribution of a
uniformly selected singular values is thus given by
$\frac{2}{n}y\sigma_n^{\mathrm{L}^\pm}(y^2)$ (the extra factor of $2y$
is because the density is associated with an implicit differential, so
the change of variable $x=y^2$ also induces the substitution
$\mathrm{d}x=2y\,\mathrm{d}y$).  By the Marchenko-Pastur law, both
$\frac{2}{n}y\sigma_n^{\mathrm{L}^-}(y^2)$ and
$\frac{2}{n}y\sigma_n^{\mathrm{L}^+}(y^2)$ converge to
quarter-ellipses as $n\to\infty$.

To see the relationship between the semicircle law and the
quarter-circle law, we note that the left-right symmetry of
$\sigma_n^{\mathrm{H}}(x)$ is a consequence of the fact that the
matrix entries of the \GUE{} are distributed symmetrically about the
origin, so that the density of a matrix is identical to the density of
its negation.  The semicircle law is thus an example of a property
that nominally involves eigenvalues, but can instead be analyzed in
terms of singular values: it is sufficient to show a relationship
between $\sigma_{n}^{\mathrm{H}}$ and $\sigma_n^{\mathrm{L}^{\pm}}$
for positive arguments. 

Hermite polynomials, $H_k(x)$, are either even or odd polynomials,
according to the parity of $k$.  As a consequence, $H_{2m}(\sqrt{y})$
and $\frac{1}{\sqrt{y}}H_{2m+1}(\sqrt{y})$ are both monic polynomials
of degree $m$ in $y$.  Applying the change of variables $y=x^2$ to the
orthogonality relationship for Hermite polynomials, and using the fact
that $H_i(x)H_j(x)$ is an even polynomial whenever $i$ and $j$ have
the same parity, we see that on restricting to the positive $x$-axis
\[
\int_{0}^{\infty}H_{2m+r}\left(\sqrt{y}\right)H_{2l+r}\left(\sqrt{y}\right)\,y^{-\frac12}\mathrm{e}^{-\frac{y}{2}}\,\mathrm{d}y=\delta_{l,m}\,(2m+r)!\sqrt{2\pi}.
\]
So the monic polynomials $\{H_{2m}(\sqrt{y})\colon{}m\geq0\}$ are
orthogonal on $(0,\infty)$ relative to
$y^{-\frac12}\mathrm{e}^{-\frac{y}{2}}$, and similarly
$\{\frac{1}{\sqrt{y}}H_{2m+1}(\sqrt{y})\colon{}m\geq0\}$ are
orthogonal on $(0,\infty)$ relative to
$y^{\frac12}\mathrm{e}^{-\frac{y}{2}}$.  As a consequence we recover
the classical fact (see for example \cite[Sec.~5.6]{Szego-OP}) that Hermite
polynomials can also be expressed in terms of the generalized Laguerre
polynomials.  In particular, even ($r=0$) and odd ($r=1$) Hermite
polynomials are given by the expression
\begin{equation}\label{eqn:HermiteasLaguerre}
H_{2n+r}(x)=x^r(-2)^n\,n!\,L_n^{(r-1/2)}\big(\frac{x^2}{2}\big) \quad\text{for $r\in\{0,1\}$},
\end{equation}
Substituting \eqref{eqn:HermiteasLaguerre} into \eqref{eqn:HermLD} we
find
\begin{align*}
  \sigma_n^{\mathrm{H}}(x)
  &=\frac{1}{\sqrt{2\pi}}\sum_{k=0}^{n_1}\frac{4^kk!^2}{(2k)!}
                        \Big[L_{k}^{(-1/2)}\big(\frac{x^2}{2}\big)\Big]^2\mathrm{e}^{-x^2/2}
  +\frac{1}{\sqrt{2\pi}}\sum_{k=0}^{n_2}\frac{4^kk!^2}{(2k+1)!}
                        \Big[L_{k}^{(+1/2)}\big(\frac{x^2}{2}\big)\Big]^2x^2\,\mathrm{e}^{-x^2/2}\\
  &=x\sigma_{n_1}^{\mathrm{L}^-}(x^2)+x\sigma_{n_2}^{\mathrm{L}^+}(x^2)
\end{align*}
where $n_1=\ceil{\frac{n}{2}}$ and $n_2=\floor{\frac{n}{2}}$.  An
immediate consequence is that the restriction of the semicircle law to
the positive quadrant is manifestly a weighted average of two copies
of the quarter-circle law.  Since
$\frac{2}{n_1}x\sigma_{n_1}^{L^-}(x^2)$ and
$\frac{2}{n_2}x\sigma_{n_2}^{L^+}(x^2)$ describe the pdfs of
uniformly chosen singular values of bi-diagonal Laguerre matrices, we
conclude that a single singular value of a \GUE{} matrix has the same
distribution as a single singular value selected from the direct sum
of two \LUE{} matrices, or via the equivalence of \LUE{} and
anti-\GUE{} matrices two anti-\GUE{} matrices of consecutive sizes.

\begin{figure}
\includegraphics[width=\textwidth]{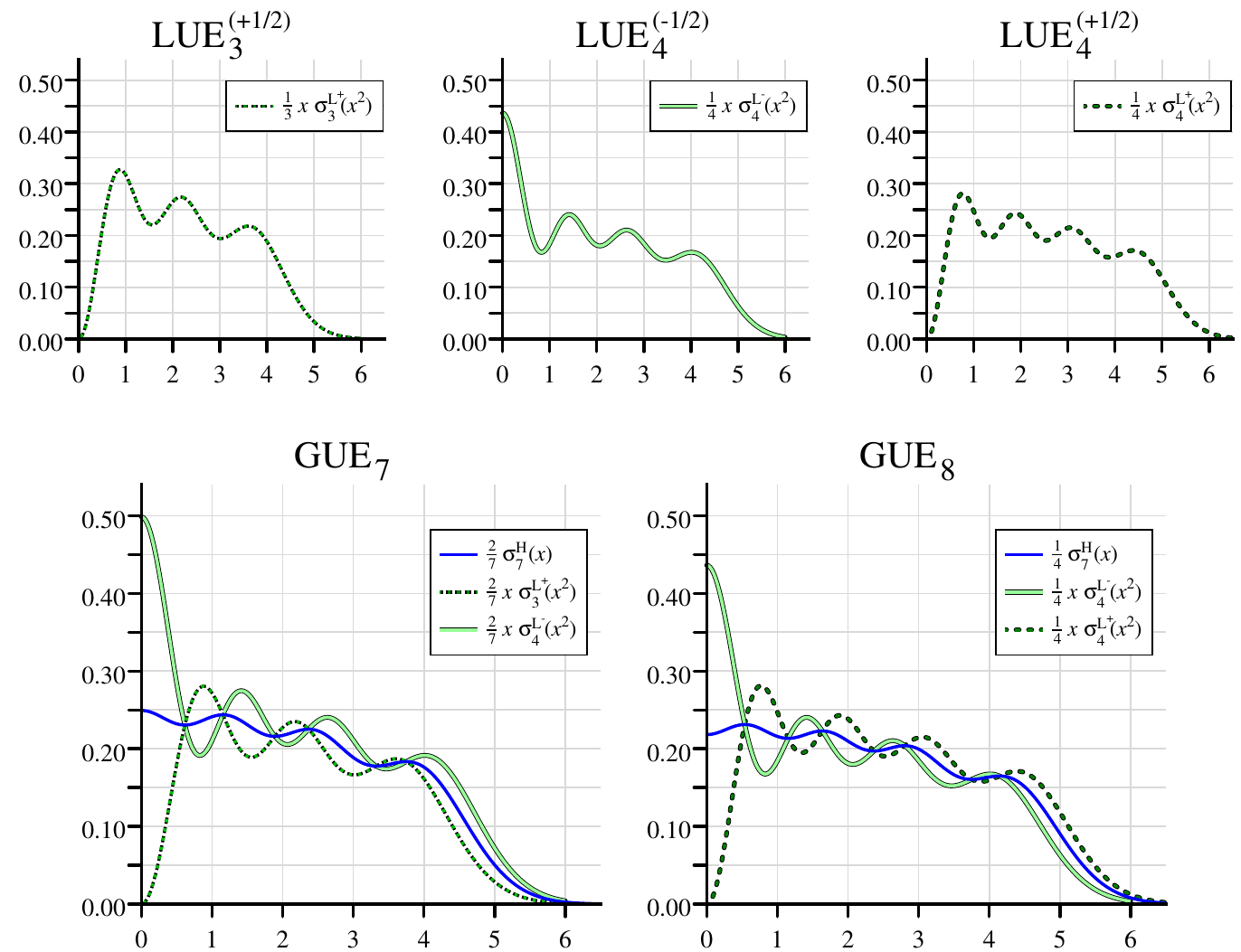}
\caption{The top three plots show pdfs for distributions of the
  positive square root of an eigenvalue selected uniformly at random
  from various Laguerre ensembles.  The bottom two plots give the
  distributions for uniformly selected singular values of \GUE{}s
  (\textcolor{blue}{blue}) as a weighted average of the Laguerre
  pdfs (\textcolor{OliveGreen}{green}).  For odd $n$, the weighting is
  unequal.  For example,
  $\frac27\sigma_7^\mathrm{H}(x)=\frac{1}{2}\left(
  \frac{4}{7}x\sigma_3^{\mathrm{L}^+}(x^2)+\frac{4}{7}x\sigma_4^{\mathrm{L}^-}(x^2)
  \right)$, but it is $\frac{2}{3}x\sigma_3^{\mathrm{L}^+}(x^2)$ and
  $\frac{2}{4}x\sigma_4^{\mathrm{L}^-}(x^2)$ that are probability
  densities.  When $n$ is even the weighting is equal, for example,
  $\frac28\sigma_8^\mathrm{H}(x)=\frac{1}{2}\left(
  \frac{2}{4}x\sigma_4^{\mathrm{L}^+}(x^2)+\frac{2}{4}x\sigma_4^{\mathrm{L}^-}(x^2)
  \right)$.}\label{fig:LagPlots}
\end{figure}

\begin{rem}
We see that the $1$-point correlation function for the singular
values of the \GUE{} is a sum of two $1$-point correlation functions
for Laguerre singular values, and conclude that picking a random
singular value from the \GUE{} is equivalent to picking one from a
mixture of two appropriate \LUE{}s.  It is notable that each
\LUE{} distribution is a component of two consecutive \GUE{}
distributions.  Figure~\ref{fig:LagPlots} presents the level densities
for the $7\times7$ and $8\times8$ \GUE{}'s as weighted averages of
\LUE{} singular value level densities.  Notice that the
$\LUE_4^{(+1/2)}$ distribution is a component of both mixtures.
\end{rem}

Concretely, we conclude that a uniformly random singular value of the
$n\times{}n$ \GUE{} has the same distribution as uniformly random
singular value from the union of an $n\times{}n$ anti-\GUE{} and an
$(n+1)\times(n+1)$ anti-\GUE{}.  In fact, it is possible to continue
in this direction, as Forrester did in \cite{Forrester-Evenness}, to
effectively show the analogous property for all $m$-point correlation
functions, and conclude that the singular values of the \GUE{} are a
mixture of the singular values of two independent ensembles.  In the
next section, we take a different approach, and derive the same
conclusion without appealing to the theory of orthogonal polynomials.

\goodbreak

\section{Main Result - A Decomposition of the Singular Values of the \GUE{}}\label{sec:decomposition}

The main result of the paper equates the joint probability density
functions for two distributions, the singular values of the
$n\times{}n$ \GUE{} and the union of the distinct non-zero singular
values of two independent anti-\GUE{} matrices, one of order $n$ and
the other of order $(n+1)$.  We do not know of any particularly
compact descriptions for the joint pdf for the distribution of the
singular values in either setting, but instead express each as a sum.
In terms of the $n\times{}n$ \GUE{}, this sum involves $2^n$ terms,
corresponding to the number of ways that $n$ singular values can be
assigned signs.  In contrast, there are $\binom{n}{\floor{{n}/{2}}}$
ways the singular values can be partitioned between an $n\times{}n$
anti-\GUE{} and a $(n+1)\times{}(n+1)$ anti-\GUE{}.  Neither sum is
particularly compact, but the second involves asymptotically fewer
terms by a factor of $\sqrt{2\pi{}n}$.  The result is thus to be
interpreted primarily as structural: it is this extra structural
information, rather than the expressions themselves, that can be used
to computational advantage.

Our main tool is to express relevant probability densities in terms of
determinants, and then to recognize evaluations that induce a
structured sparsity and permit writing the resulting determinants as
products.  In addition to appearing in Forrester's work on gap
probabilities, where \cite[Eq.~(2.6)]{Forrester-Evenness} is
equivalent to our main result, the same pattern occurs in existing
proofs of the applications discussed in Section~\ref{sec:applications}
(\cite[Ch.~20]{Mehta} and \cite{MeNo}), as well as related problems
about enumerative properties of orientable maps (\cite{JV-Characters,
  JV-Eulerian, JPV}).  The emphasis of the present work is to show
that all of these results are consequences of the same underlying
structural decomposition.

In practice, when evaluating integrals of symmetric functions, it is
often convenient to write the integral as a sum of terms that are
equal by symmetry, and then to consider only a single term.  This is
the case, for example, in \cite[Ch.~15]{Mehta}), where Mehta considers
integrals related to complex Ginibre ensembles.  We give presentations
of \eqref{eqn:GUEdensity} and \eqref{eqn:skewdensity} that are
suitable for desymmetrization.

\begin{lemma}\label{lem:nonsymGUE} 
  The joint probability density for the eigenvalues of the
  \GUE{} can be represented as
  \begin{equation}\label{eqn:nonsymGUE}
    p_n^{\mathrm{H}}(x_1,x_2,\dotsc,x_n) =
       c_n^{\mathrm{H}}\sum_{\pi\in\mathfrak{S}_n}
       {\det\left(\big(x_{\pi_i}^{i+j-2}\big)_{1\leq i,j\leq n}\right)}
       \,\exp\Big(-\frac12\sum_{i=1}^n x_i^2\Big)
       \differentials{x}{1}{n},
  \end{equation}
  where the sum is taken over all permutations of the indices $\{1,2,\dotsc,n\}$.
\end{lemma}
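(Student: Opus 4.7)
The plan is to prove the polynomial identity
\[
\prod_{1\leq i<j\leq n}\abs{x_i-x_j}^2 = \sum_{\pi\in\mathfrak{S}_n}\det\big((x_{\pi_i}^{i+j-2})_{1\leq i,j\leq n}\big),
\]
which, since the Gaussian weight, the normalizing constant $c_n^{\mathrm{H}}$, and the differentials already agree between \eqref{eqn:GUEdensity} and \eqref{eqn:nonsymGUE}, will immediately give the claimed representation of $p_n^{\mathrm{H}}$.

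The key observation I would exploit is that the $(i,j)$-entry inside the determinant factors as $x_{\pi_i}^{i+j-2} = x_{\pi_i}^{i-1}\cdot x_{\pi_i}^{j-1}$. My first step is to pull $x_{\pi_i}^{i-1}$ out of the $i$-th row, which reduces each summand to the product of a monomial $\prod_{i=1}^n x_{\pi_i}^{i-1}$ with the residual determinant $\det(x_{\pi_i}^{j-1})_{1\leq i,j\leq n}$. The latter is the standard Vandermonde $\det(x_i^{j-1})_{i,j}$ with its rows reordered according to $\pi$, so I would identify it as $\mathrm{sgn}(\pi)\prod_{i<j}(x_j-x_i)$.

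My second step is to sum over $\pi$. The Vandermonde $\prod_{i<j}(x_j-x_i)$ is independent of $\pi$ and pulls out, leaving $\sum_\pi \mathrm{sgn}(\pi)\prod_{i=1}^n x_{\pi_i}^{i-1}$ to be evaluated. I would recognize this, via the Leibniz formula, as the determinant of the matrix with $(i,k)$-entry $x_k^{i-1}$. Since this matrix is the transpose of the standard Vandermonde, its determinant is again $\prod_{i<j}(x_j-x_i)$, and the two Vandermonde factors multiply to give the required $\prod_{i<j}\abs{x_i-x_j}^2$.

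The argument is essentially routine; the one point where care is needed is the sign bookkeeping. The splitting $i+j-2 = (i-1) + (j-1)$ is engineered so that the row-permuted residual determinant contributes exactly $\mathrm{sgn}(\pi)$, and this sign is precisely what is needed to convert the subsequent $\pi$-sum into a second Vandermonde rather than into an alternating sum that would collapse to zero. Any other splitting of the row exponent would destroy this cancellation, so the specific shape of \eqref{eqn:nonsymGUE} is essentially forced by the requirement that each summand already be a signed monomial multiple of a single Vandermonde factor, ready for the desymmetrization arguments of Section~\ref{sec:decomposition}.
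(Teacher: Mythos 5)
Your argument is correct but runs in the opposite direction from the paper's. The paper works from the left side: it recognizes $\prod_{i<j}\abs{x_i-x_j}^2$ as $\det(V)\det(V^{\mathrm T})=\det(V^{\mathrm T}V)$ with $V=(x_i^{j-1})$, so the inner matrix becomes a Hankel matrix of power sums $p_{i+j-2}(\vec x)$; multilinearity of the determinant in the columns then expands this into a sum over choices of one variable per column, and terms with a repeated variable vanish because two columns become proportional, leaving exactly the sum over permutations. You work from the right side: for each fixed $\pi$ you pull $x_{\pi_i}^{i-1}$ out of row $i$, identify the residual $\det(x_{\pi_i}^{j-1})$ as the row-permuted Vandermonde $\mathrm{sgn}(\pi)\det V$, and then recognize $\sum_\pi\mathrm{sgn}(\pi)\prod_i x_{\pi_i}^{i-1}$ as a second Vandermonde by the Leibniz formula. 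Both proofs rest on the same two elementary facts (multiplicativity of determinants and Leibniz/multilinear expansion), are comparably short, and are clearly dual; the paper's direction has the advantage of \emph{producing} the desymmetrized sum rather than merely verifying it, which foreshadows the proof of Theorem~\ref{thm:mainresult}, while yours is the cleaner check once the formula is in hand. Your closing remark that the exponent split $i+j-2=(i-1)+(j-1)$ is essentially forced is accurate; in the paper's direction this split falls out automatically from the Gram factorization $V^{\mathrm T}V$ rather than having to be engineered.
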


\begin{proof}
  We recognize the product $\prod_{1\leq{}i<j\leq{}n}\abs{x_i-x_j}^2$
  in \eqref{eqn:GUEdensity} as the square of the Vandermonde
  determinant, and use multiplicativity and invariance under
  transposition to obtain
  \begin{align*}
    {\prod_{1\leq i<j\leq n}\abs{x_i-x_j}^2}
    &={\det\left(\big(x_i^{j-1}\big)_{1\leq i,j\leq
          n}\big(x_j^{i-1}\big)_{1\leq i,j\leq n}\right)}.
  \end{align*}
  Multiplying the matrices inside the determinant gives the expression.
  \begin{align*}
    \prod_{1\leq{}i<j\leq{}n}\abs{x_i-x_j}^2&=
    {\det\begin{pmatrix} n & p_1(\vec{x}) & p_2(\vec{x})
      & \cdots & p_{n-1}(\vec{x}) \\ p_1(\vec{x}) & p_2(\vec{x}) &
      p_3(\vec{x}) & \cdots & p_{n}(\vec{x}) \\ p_2(\vec{x}) &
      p_3(\vec{x}) & p_4(\vec{x}) & \cdots & p_{n+1}(\vec{x})
      \\ \vdots & \vdots & \vdots & \ddots & \vdots
      \\ p_{n-1}(\vec{x}) & p_n(\vec{x}) & p_{n+1}(\vec{x}) & \cdots &
      p_{2n-2}(\vec{x}).
      \end{pmatrix}}
  \end{align*}
  By column-linearity, we can write this as a sum of determinants
  where each column depends on a single variable.  The determinant
  vanishes if any variable is duplicated, so the sum can be restricted
  to terms in which each variable occurs in a single column, and we
  obtain
  \begin{align*}
    \prod_{1\leq{}i<j\leq{}n}\abs{x_i-x_j}^2&=
    \sum_{\pi\in\mathfrak{S}_n}
       {\det\left(\big(x_{\pi_i}^{i+j-2}\big)_{1\leq i,j\leq n}\right)}.
  \end{align*}
  The factor, $\exp\left(-\frac12\sum_{i=1}^nx_i^2\right)$ is
  symmetric and thus common to all terms, and the result follows.
\end{proof}

The proof relied only on the presence of the Vandermonde factor, so a
similar derivation gives a corresponding presentation for the joint
probability density functions of the anti-\GUE{} given in
\eqref{eqn:skewdensity}.

\begin{lemma} For $r\in\{0,1\}$, the joint pdf of the
$(2n+r)\times(2n+r)$ anti-\GUE{} can be presented as
  \begin{equation}\label{eqn:nonsymaGUE}
    p_{2n+r}^{\mathrm{aG}}(x_1,x_2,\dotsc,x_n)\coloneqq{}
    c_{2n+r}^{\mathrm{aG}}\sum_{\pi\in\mathfrak{S}_n}
    {\det\left(\big(\theta_{\pi_i}^{2i+2j-4+2r}\big)_{1\leq i,j\leq n}\right)}
    \,\exp\big(-\frac{1}{2}\sum_{j=1}^n\theta_j^2\big)
    \prod_{j=1}^n\mathrm{d}\theta_i.
  \end{equation}
\end{lemma}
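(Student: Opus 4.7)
The plan is to mimic the proof of Lemma~\ref{lem:nonsymGUE} almost verbatim, treating $\theta_j^2$ as the basic variable so that the factor $\prod_{1\le j<k\le n}(\theta_j^2-\theta_k^2)^2$ in \eqref{eqn:skewdensity} becomes a standard Vandermonde-squared expression. The extra factor $\prod_{j=1}^n \theta_j^{2r}$ is then absorbed into each desymmetrized determinant as a row-scaling.

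First I would set $y_j\coloneqq\theta_j^2$ and observe that
\[
\prod_{1\le j<k\le n}(\theta_j^2-\theta_k^2)^2=\prod_{1\le j<k\le n}(y_j-y_k)^2.
\]
The argument of Lemma~\ref{lem:nonsymGUE} used only the algebraic identity $\prod_{j<k}(y_j-y_k)^2=\det(V)^2=\det(V^\mathrm{T}V)$ together with column-linearity and the vanishing of determinants with repeated columns, so it applies unchanged in the variables $y_j$ and yields
\[
\prod_{1\le j<k\le n}(y_j-y_k)^2
=\sum_{\pi\in\mathfrak{S}_n}\det\!\left(\big(y_{\pi_i}^{i+j-2}\big)_{1\le i,j\le n}\right).
\]
Substituting $y_j=\theta_j^2$ back gives
\[
\prod_{1\le j<k\le n}(\theta_j^2-\theta_k^2)^2
=\sum_{\pi\in\mathfrak{S}_n}\det\!\left(\big(\theta_{\pi_i}^{2i+2j-4}\big)_{1\le i,j\le n}\right).
\]

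Next I would handle the prefactor $\prod_{j=1}^n\theta_j^{2r}$, the only new feature compared with Lemma~\ref{lem:nonsymGUE}. Because $\pi$ is a bijection, $\prod_{j=1}^n\theta_j^{2r}=\prod_{i=1}^n\theta_{\pi_i}^{2r}$, and in the matrix $\big(\theta_{\pi_i}^{2i+2j-4}\big)_{i,j}$ every entry of row $i$ depends only on the single variable $\theta_{\pi_i}$. Therefore the scalar $\theta_{\pi_i}^{2r}$ can be pulled into row $i$, multiplying each entry of that row by $\theta_{\pi_i}^{2r}$, which bumps the exponent up to $2i+2j-4+2r$. This gives
\[
\prod_{j=1}^n\theta_j^{2r}\,\det\!\left(\big(\theta_{\pi_i}^{2i+2j-4}\big)_{1\le i,j\le n}\right)
=\det\!\left(\big(\theta_{\pi_i}^{2i+2j-4+2r}\big)_{1\le i,j\le n}\right),
\]
performed term-by-term inside the sum over $\pi$. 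The Gaussian factor $\exp\!\big(-\tfrac12\sum_{j=1}^n\theta_j^2\big)$ is symmetric in the $\theta_j$'s and is simply carried along, yielding \eqref{eqn:nonsymaGUE}.

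There is no real obstacle: the only step requiring a second of attention is the row-scaling move, and it works precisely because the desymmetrization has already separated the variables row-by-row. The whole proof therefore reduces to the combination of the already-established Vandermonde computation, a trivial change of variable $y=\theta^2$, and one elementary row-scaling per term of the sum.
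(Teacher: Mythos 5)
Your proof is correct and follows exactly the route the paper intends: the paper itself dispenses with this lemma in a single remark (``The proof relied only on the presence of the Vandermonde factor, so a similar derivation gives...''), and your argument fills in the details by applying the Lemma~\ref{lem:nonsymGUE} computation in the variables $y_j=\theta_j^2$ and then absorbing the extra weight $\prod_j\theta_j^{2r}$ via row-multilinearity. That row-scaling step is exactly what is needed to move the $2r$ into the exponent $2i+2j-4+2r$, a detail the paper's remark glosses over, and you have identified and justified it cleanly.
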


The identity terms of the summations in \eqref{eqn:nonsymGUE} and
\eqref{eqn:nonsymaGUE} determine signed measures,
$\mu_{n}^{\mathrm{H}}$ and $\mu_{n}^{\mathrm{aG}}$, on ordered
$n$-tuples of real numbers given by
\begin{align*}
  \mu_{n}^{\mathrm{H}}(x_1,x_2,\dotsc,x_n)&\coloneqq n!\,c_n^{\mathrm{H}}
  {\det\left(\big(x_{i}^{i+j-2}\big)_{1\leq i,j\leq n}\right)}
  \,\exp\Big(-\frac12\sum_{i=1}^n x_i^2\Big)
  \differentials{x}{1}{n},\\
  \mu_{2n+r}^{\mathrm{aG}}(\theta_1,\theta_2,\dotsc,\theta_n)&\coloneqq
    n!\,c_{2n+r}^{\mathrm{aG}}
    {\det\left(\big(\theta_{i}^{2i+2j-4+2r}\big)_{1\leq i,j\leq n}\right)}
    \,\exp\big(-\frac{1}{2}\sum_{j=1}^n\theta_j^2\big)
    \prod_{j=1}^n\mathrm{d}\theta_i,
\end{align*}
that can be used to evaluate expectations of symmetric functions of
eigenvalues.  Unlike $p_n^{\mathrm{H}}$ and $p_{2n+r}^{\mathrm{aG}}$
which can be sampled by diagonalizing randomly generated matrices (for
example the tri-diagonal matrices of \cite{DuEd} or \cite{Trotter}),
it is not clear how to use $\mu_{n}^{\mathrm{H}}$ and
$\mu_{2n+r}^{\mathrm{aG}}$ in Monte Carlo experiments.

\goodbreak

We are now in a position to give the main result of the paper, an
equivalent form of which was previously described by Forrester in
\cite[Eq.~(2.6)]{Forrester-Evenness}.  

\begin{thm}\label{thm:mainresult}
  The singular values of the $n\times{}n$ \GUE{} have the same
  distribution as the union of the distinct singular values of two
  anti-\GUE{} matrices, one of order $n$, the other of order $n+1$.
  In terms of probability densities, for $x_1,x_2,\dotsc,x_n\geq0$,
  \begin{equation}\label{eqn:maintoprove}
    \sum_{\epsilon\in\{\pm1\}^n}p_n^{\mathrm{H}}(\epsilon_1x_1,\epsilon_2x_2,\dotsc,\epsilon_nx_n)
    =
    \frac{1}{\binom{n}{\floor{n/2}}}
    \sum_{S,T}p_{n}^{\mathrm{aG}}(x_{s_1},x_{s_2},\dotsc,x_{s_{\floor{n/2}}})
    p_{n+1}^{\mathrm{aG}}(x_{t_1},x_{t_2},\dotsc,x_{t_{\ceil{n/2}}})
  \end{equation}
  where the left sum runs over the $2^n$ ways $\{x_1,x_2,\dotsc,x_n\}$
  can be assigned signs to describe eigenvalues of a \GUE{}, and the right
  sum runs over the $\binom{n}{\floor{n/2}}$ ways to partition
  $\{1,2,\dotsc,n\}$ into two sets
  $S=\{s_1,s_2,\dotsc,s_{\floor{n/2}}\}$ and
  $T=\{t_1,t_2,\dotsc,t_{\ceil{n/2}}\}$ corresponding to the
  anti-\GUE{} factors.
\end{thm}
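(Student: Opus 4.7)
The plan is to apply Lemma~\ref{lem:nonsymGUE} on the left and the analogous desymmetrized representation \eqref{eqn:nonsymaGUE} on the right, then show that both sides collapse to the same permutation sum. The decisive observation is that summing over $\epsilon\in\{\pm1\}^n$ annihilates every term in the Leibniz expansion of the \GUE{} determinant indexed by a permutation that does not preserve parity; the surviving terms cause the $n\times n$ determinant to factor into two smaller determinants whose entries match exactly the anti-\GUE{} kernels of orders $n$ and $n+1$.

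First I would use Lemma~\ref{lem:nonsymGUE} to write
\[
  \sum_{\epsilon} p_n^{\mathrm{H}}(\epsilon_1 x_1,\dotsc,\epsilon_n x_n)
  = c_n^{\mathrm{H}}\,\mathrm{e}^{-\frac12\sum x_i^2}\sum_{\pi\in\mathfrak{S}_n}\sum_\epsilon\det\bigl((\epsilon_{\pi_i}x_{\pi_i})^{i+j-2}\bigr),
\]
expand each determinant via Leibniz, and evaluate the inner sign sum as $\prod_i\sum_{\epsilon_i=\pm1}\epsilon_i^{i+\sigma(i)-2}$, which equals $2^n$ when every exponent $i+\sigma(i)$ is even and $0$ otherwise. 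Parity-preserving $\sigma$ decompose uniquely as pairs $(\sigma_O,\sigma_E)$ of permutations of the odd- and even-indexed positions, with $\operatorname{sgn}(\sigma)=\operatorname{sgn}(\sigma_O)\operatorname{sgn}(\sigma_E)$, so the determinant factors into a product of two smaller determinants $D_O(x_{\pi_1},x_{\pi_3},\dotsc)$ and $D_E(x_{\pi_2},x_{\pi_4},\dotsc)$ whose $(a,b)$ entries are $y_a^{2a+2b-4}$ and $z_a^{2a+2b-2}$. These are precisely the kernels appearing in \eqref{eqn:nonsymaGUE} for $r=0$ and $r=1$; checking sizes shows they correspond to anti-\GUE{} matrices of orders $n$ and $n+1$, with the role of each block swapping according to the parity of $n$.

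Next I would apply \eqref{eqn:nonsymaGUE} to each factor on the right-hand side. For each ordered partition $(S,T)$ of $\{1,\dotsc,n\}$ with $|S|=\floor{n/2}$ and $|T|=\ceil{n/2}$, the product $p_n^{\mathrm{aG}}(x_S)\,p_{n+1}^{\mathrm{aG}}(x_T)$ expands into a double sum over orderings $\rho_S$ of $S$ and $\rho_T$ of $T$ whose integrand is a product of the same kernels $D_O,D_E$ against the common weight $\mathrm{e}^{-\frac12\sum x_i^2}$. The combined sum $\sum_{(S,T)}\sum_{\rho_S,\rho_T}$ is in evident bijection with $\sum_{\pi\in\mathfrak{S}_n}$: to $\pi$ we associate $S=\{\pi_1,\pi_3,\dotsc\}$ and $T=\{\pi_2,\pi_4,\dotsc\}$ with their inherited orderings. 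After this reindexing both sides of \eqref{eqn:maintoprove} become the identical permutation sum times overall constants.

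Matching those constants is then forced: both sides of \eqref{eqn:maintoprove} are probability densities on $[0,\infty)^n$---the left-hand side by construction, the right-hand side because each of the $\binom{n}{\floor{n/2}}$ terms $p_n^{\mathrm{aG}}(x_S)\,p_{n+1}^{\mathrm{aG}}(x_T)$ has total mass one---so proportionality forces equality, yielding $2^n c_n^{\mathrm{H}}\binom{n}{\floor{n/2}}=c_n^{\mathrm{aG}}\,c_{n+1}^{\mathrm{aG}}$ as a byproduct. The main obstacle will be bookkeeping: tracking which of the two smaller determinants represents the order-$n$ versus the order-$(n+1)$ anti-\GUE{} as $n$ flips parity, and confirming that the sign factors $\operatorname{sgn}(\sigma_O)\operatorname{sgn}(\sigma_E)$ produced by the restriction to parity-preserving $\sigma$ agree with the signs implicit in the Leibniz expansions of the anti-\GUE{} determinants.
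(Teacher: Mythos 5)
Your proposal is correct and takes essentially the same approach as the paper: desymmetrize via Lemma~\ref{lem:nonsymGUE}, observe that the $\epsilon$-sum annihilates odd-power terms and factors the determinant into two anti-\GUE{} blocks of orders $n$ and $n+1$, reindex the permutation sum as a sum over ordered set partitions, and fix the overall constant by noting both sides are probability densities on the positive orthant. The only difference is cosmetic---you expand via Leibniz and restrict to parity-preserving permutations $\sigma=(\sigma_O,\sigma_E)$, while the paper simultaneously permutes rows and columns to expose a block-diagonal (checkerboard) structure; these are two descriptions of the identical factorization.
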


\begin{proof}
  Using Lemma~\ref{lem:nonsymGUE}, the density of the singular values
  of the \GUE{} is given by
  \begin{equation*}
    c_n^{\mathrm{H}}
    \sum_{\epsilon\in\{\pm1\}^n}
    \sum_{\pi\in\mathfrak{S}_n}
    {\det\left(\big((\epsilon_{i}x_{\pi_i})^{i+j-2}\big)_{1\leq
        i,j\leq n}\right)} \,\exp\Big(-\frac12\sum_{i=1}^n x_i^2\Big)
    \differentials{x}{1}{n}.
  \end{equation*}
  Both sums are finite, so the $\epsilon$-summation can be carried out
  first, and since for fixed $\pi$ each $\epsilon_i$ occurs in a
  single column, it can be applied column-by-column.  Except for the
  determinant, the expression is invariant under sign-change, so
  summing over $\epsilon$ annihilates all matrix entries involving
  monomials of odd degree and creates a checkerboard pattern of
  sparsity.  The rows and columns of the resulting matrix can be
  simultaneously permuted so that the odd columns and rows occur
  before the even columns and rows, and this collects the zero and
  non-zero entries into blocks (see Figure~\ref{fig:shuffles}).  
  \begin{figure}
    \[
    \left(\gminipage[scale=.9]{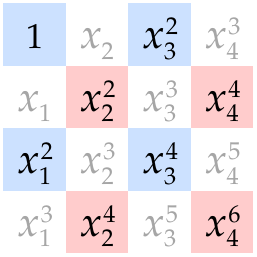}\right)\sim\left(\gminipage[scale=.9]{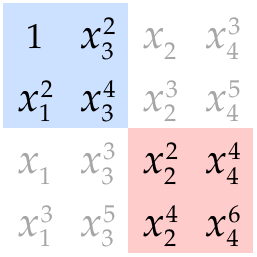}\right)
    \quad
    \left(\gminipage[scale=.9]{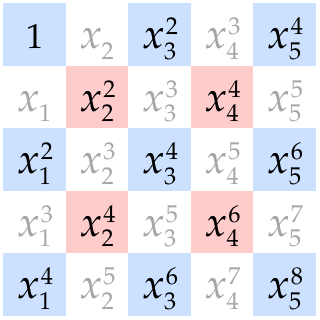}\right)\sim\left(\gminipage[scale=.9]{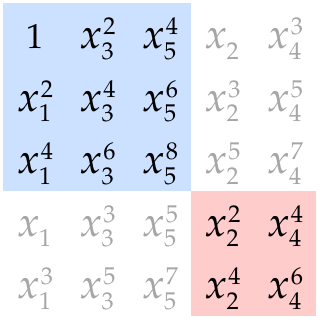}\right)
    \]
    \caption{The rows and columns of
      $\big(x_i^{i+j-2}\big)_{1\leq{}i,j\leq{}n}$ are permuted to
      group the even monomials into two blocks. 
      One block involves even-indexed variables, the
      other involves odd-indexed variables.  Odd monomials are
      annihilated by the $\epsilon$-summation and do not contribute to
      the density of singular values.  }\label{fig:shuffles}
  \end{figure}
  We need only consider the direct sum of the non-zero blocks, and 
  obtain the decomposition:
  \[
  \sum_{\epsilon\in\{\pm1\}^n}
  \det\left(\big((\epsilon_ix_{\pi_i})^{i+j-2}\big)_{1\leq{}i,j\leq{}n}\right)
  =2^n\det\left( \big(x_{\pi_{2i-1}}^{2i+2j-4}\big)_{1\leq i,j\leq
    \ceil{\frac{n}{2}}} \oplus \big(x_{\pi_{2i}}^{2i+2j-2}\big)_{1\leq
    i,j\leq \floor{\frac{n}{2}}} \right).
  \]
  The determinant on the right-hand side factors as a product of two
  determinants of the type occurring in \eqref{eqn:nonsymaGUE} for
  anti-\GUE{} of orders $2\ceil{\frac{n}{2}}$ and
  $2\floor{\frac{n}{2}}+1$, which correspond to $n$ and $n+1$ for all
  choices of $n$ (although the particular pairing depends on parity).
  The exponential factor and volume elements are separable, and thus
  the theorem is true up to a scalar factor.  This scalar is
  necessarily unity since both sides of \eqref{eqn:maintoprove} are
  probability densities on the positive orthant.
\end{proof}

  \goodbreak

\begin{exam}\label{exam:mu2}
  For the \GUE{} of order $2$, we have
  $p_2^{\mathrm{H}}(x,y)=\frac{1}{4\pi}(x^2-2xy+y^2)\mathrm{e}^{-x^2/2-y^2/2}$
  and
  $\mu_2^{\mathrm{H}}(x,y)=\frac{1}{2\pi}(y^2-xy)\mathrm{e}^{-x^2/2-y^2/2}$.
  Since
  $p_2^{\mathrm{H}}(a,b)=\frac{1}{2}\mu_2^{\mathrm{H}}(a,b)+\frac{1}{2}\mu_2^{\mathrm{H}}(b,a)$
  for all $a$ and $b$, they induce the same density on $2$--sets.
  While $p_2^{\mathrm{H}}(x,y)$ is symmetric about $y=x$ and $y=-x$,
  the signed measure $\mu_2^{\mathrm{H}}(x,y)$ is symmetric about 
  $y=\frac12x$ and $y=-2x$ (see Figure~\ref{fig:GUE2pdfs}).
\end{exam}

\begin{figure}[!htbp]
\hspace*{\fill}%
\gminipage[width=.45\textwidth]{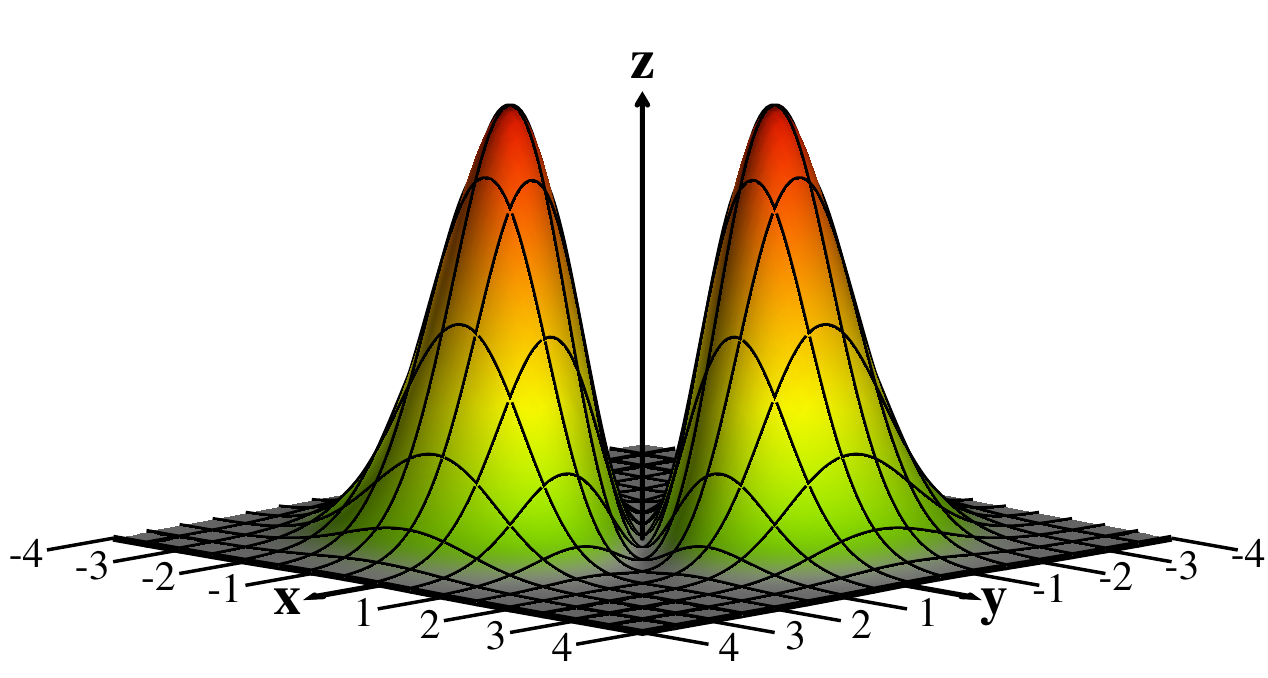}%
\hspace*{\fill}\mbox{}\hspace{\fill}%
\gminipage[width=.45\textwidth]{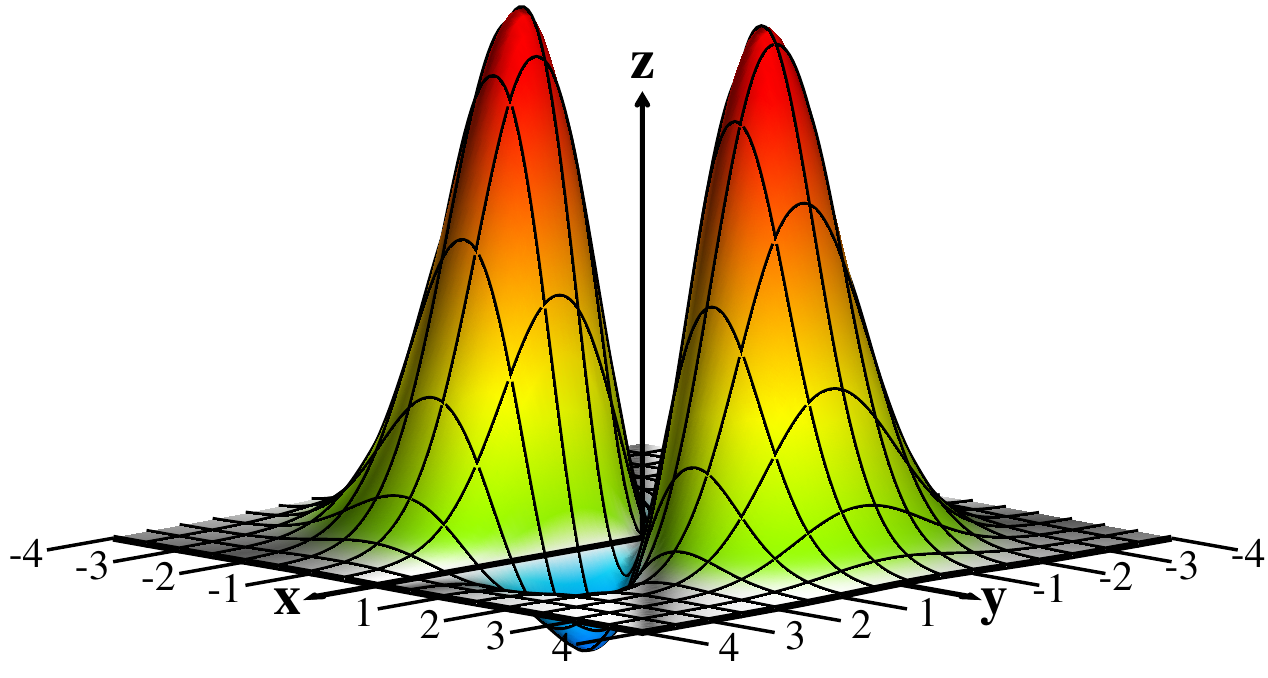}%
\hspace{\fill}\mbox{}
\hspace*{\fill}%
\gminipage[width=.36\textwidth]{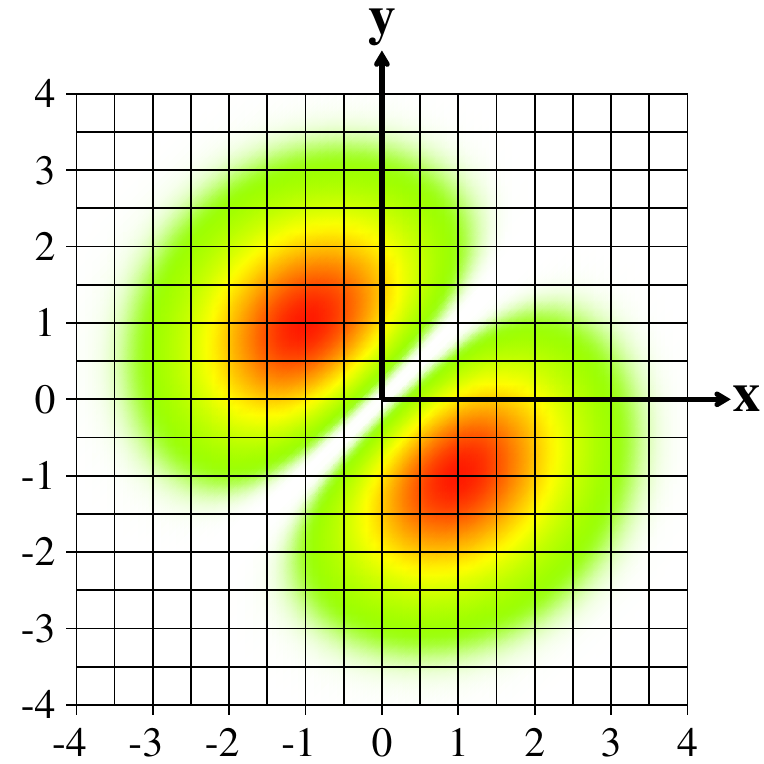}%
\hspace*{\fill}\mbox{}\hspace{\fill}%
\gminipage[width=.36\textwidth]{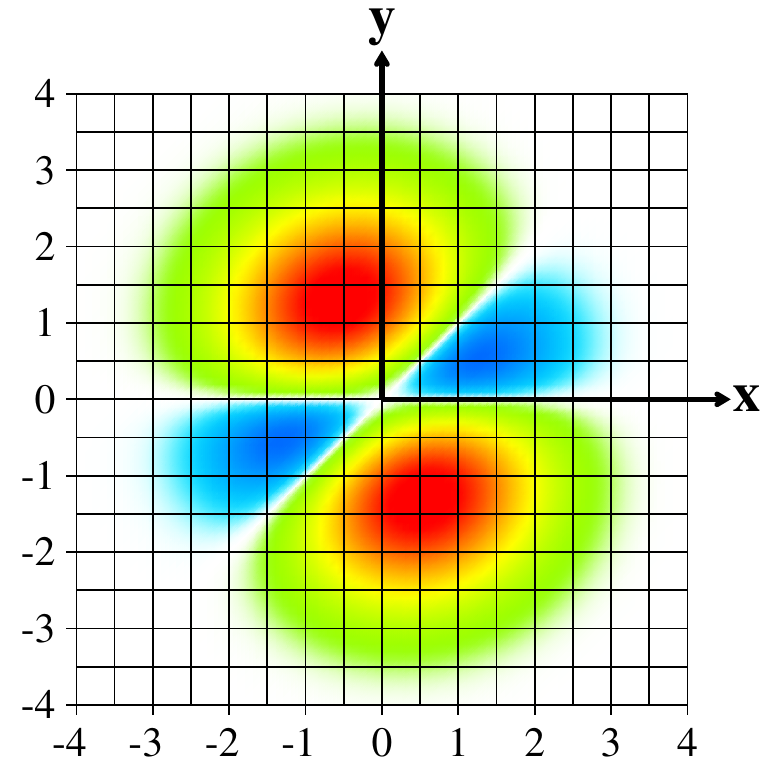}%
\hspace{\fill}\mbox{}
\caption{ The density $p_2^{\mathrm{H}}(x,y)$ (left) and signed
  measure $\mu_2^{\mathrm{H}}(x,y)$ (right).
}\label{fig:GUE2pdfs}
\end{figure}

\begin{exam}\label{exam:mu-sing-is-separable}
  The singular values of the $2\times2$ \GUE{} have density
  $\frac{1}{2}f(x,y)+\frac{1}{2}f(y,x)$ where
  \[
  f(x,y)\coloneqq\mu_2^{\mathrm{H}}(x,y)+\mu_2^{\mathrm{H}}(-x,y)+\mu_2^{\mathrm{H}}(x,-y)+\mu_2^{\mathrm{H}}(-x,-y)
  =\frac{2}{\pi}y^2\,\mathrm{e}^{-x^2/2-y^2/2}
  \]
  is itself a probability density.  It is the product of densities of
  independent $X\sim\chi_1$ and $Y\sim\chi_3$ random variables.
  Figure~\ref{fig:GUE2sing} plots
  $g(x,y)=\frac{f(x,y)}{f(0,\sqrt{2})}$ as the product
  $g(x,y)=g(x,\sqrt{2})\times{}g(0,y)$.
\end{exam}

\begin{figure}
\hspace*{\fill}\includegraphics[width=.7\textwidth]{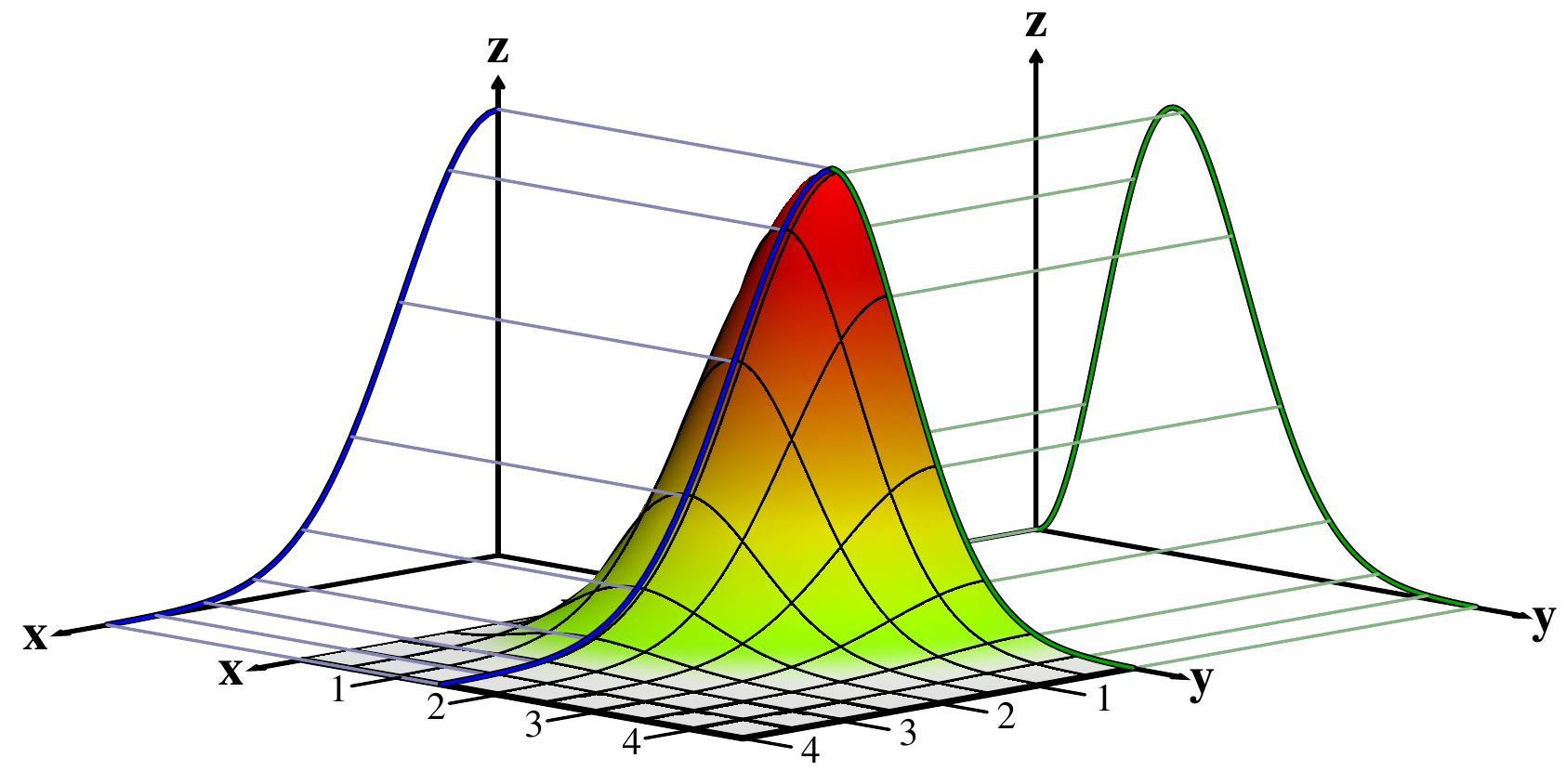}\hspace{\fill}\mbox{}
\caption{The function
  $g(x,y)=\frac{\mathrm{e}}{2}y^2\mathrm{e}^{-x^2/2-y^2/2}$ is
  separable with a maximum value of $g(0,\sqrt{2})=1$ and satisfies
  the relation $g(x,y)=g(x,\sqrt{2})\times{}g(0,y)$.  The factors
  correspond to the fact that the singular values of the $2\times{}2$
  \GUE{} have the same distribution as independently distributed
  $\chi_1$ and $\chi_3$ random variables.}\label{fig:GUE2sing}
\end{figure}

\begin{rem}
  The proof did not use the independent Gaussian nature of the matrix
  entries in any essential way.  As in
  \cite[Eq.~2.6]{Forrester-Evenness} and
  \cite[Sec~8.4.1]{Forrester-Log-gases}, a similar decomposition
  applies when working with any density of the form $c\prod_{1\leq
    i<j\leq n}(x_i-x_j)^2 \prod_{i=1}^nw(\abs{x_i})$.  The authors are
  unaware of any immediate applications involving these more general
  distributions.
\end{rem}

\begin{rem}
  In related work, previous authors have often been forced to invoke a
  case analysis, depending on the parity of $n$.  This is the case,
  for example, in the motivating work of Jackson and Visentin
  \cite{JV-Characters,JV-Eulerian}, in which their factorization
  applies directly only when $n$ is even, with odd $n$ recovered by
  polynomiality.  Avoiding this complication is the principal reason
  that we prefer to describe the component distributions in terms of
  anti-\GUE{} instead of $\LUE^{(\pm1/2)}$ distributions.
\end{rem}

\subsection{Bi-diagonal Representations}
A consequence of Theorem~\ref{thm:mainresult} is that the singular
values of the \GUE{} inherit bi-diagonal representations from anti-\GUE{}
and \LUE{} matrices.  In fact, for a given $n$, we have two distinct
bi-diagonal models for the singular values of the order $n$ \GUE{},
one using the natural representation of the odd anti-\GUE{} component
that emphasizes the one-parameter nature of the \GUE{}s, and a second
obtained by considering this component in terms of Laguerre--$(+1/2)$
matrices.  Figure~\ref{fig:bi-diagonalstaircase} shows the relationship
between these representations.  Using the \LUE{} representation of the
factors, we have the direct sum decomposition
\begin{equation}\label{eqn:GUEbidiag}
\GUE_n^{(s)}\sim\begin{pmatrix}
\chi_{n_1}\\\chi_{n_1-1}&\chi_{n_1-2}\\&\ddots&\ddots\\&&\chi_4&\chi_3\\&&&\chi_2&\chi_1
\end{pmatrix}
\oplus\begin{pmatrix}
\chi_{n_2}\\\chi_{n_2-3}&\chi_{n_2-2}\\&\ddots&\ddots\\&&\chi_4&\chi_5\\&&&\chi_2&\chi_3
\end{pmatrix}
\end{equation}
where $n_1=2\ceil{\frac{n}{2}}-1$ and $n_2=2\floor{\frac{n}{2}}+1$.
This gives even powers of the determinant of \GUE{} matrices as simple
a structure as determinants of Wishart matrices, and is the basis of
Section~\ref{sec:determinant}.

\begin{figure}
\hspace*{\fill}\includegraphics[scale=.9]{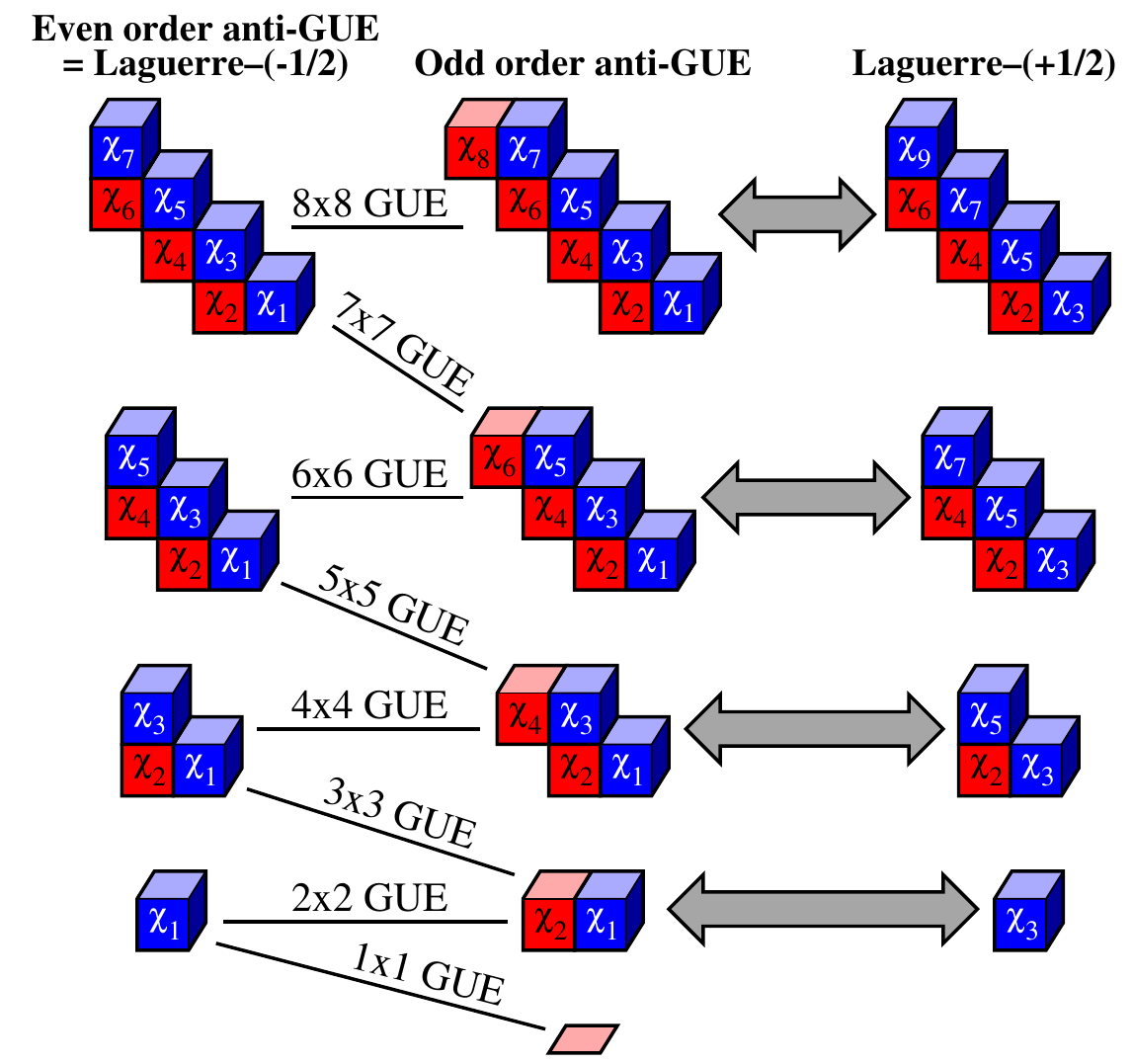}\hspace{\fill}\mbox{}
\caption{\label{fig:bi-diagonalstaircase} Bi-diagonal models for the
  singular values of \GUE{}s.  Each \GUE{} takes its singular values
  from two independent blocks corresponding to an even and an odd
  order anti-\GUE{}.  Each odd-order anti-\GUE{}s (second column) has
  an alternate representation when considered as a Laguerre--$(+1/2)$
  matrix (third column).}
\end{figure}

\subsection{Relationship with Combinatorics of Orientable Maps}\label{sec:maps}

Our interest in Theorem~\ref{thm:mainresult} stem from the second
author's attempts to develop a more natural understanding of what
turned out to be an equivalent result from enumerative combinatorics.
In fact both Theorem~\ref{thm:mainresult} and the relationship between
Hermite and Laguerre level densities from
Section~\ref{sec:leveldensities} can be stated as results about map
enumeration, although the equivalence is non-trivial.

Combinatorially, the moments of the $\GUE{}_n$ count rooted embeddings
of graphs in orientable surfaces in which each face is painted with
one of $n$ colors.  Similarly, when $n+a$ is a positive integer, the
moments of the $\LUE{}_n^{(a)}$ count rooted embeddings of
face-bipartite graphs in orientable surfaces such that each face of
one class is painted with one of $n$ colors, and each face of the
other class is painted with one of $m=n+a$ colors.  Scalings of these
moments for large $n$ are dominated by embeddings with a maximum
number of faces.  For planar maps, these are necessarily the duals of
trees, which are enumerated by Catalan numbers.  In this context, the
relationship between the semicircle law and the quarter-circle law can
be seen as a statement that all trees are bipartite, so it follows
that one-part moments of the two ensembles have the same leading order
behavior.




In \cite{JV-Characters}, Jackson and Visentin used permutation
representations of orientable maps to derive expressions for
generating series of several classes of such maps in terms of
irreducible characters of the symmetric group, and manipulated these
expressions \emph{via} their relationships with Schur functions.  In
this framework, they exhibited a sparsity pattern and factorization
for determinantal representations of products of the form
$V(\vec{x})^2s_{\theta}(\vec{x})$, with $s_\theta$ a Schur function,
and used this factorization to obtain a functional relationship
between generating series.  Their construction applies directly only
when $n$ is an even integer, but the functional identity is extended
to odd $n$ with the observation that moments depend polynomially on
$n$.  In contrast, by working with $V(\vec{x})^2$ directly in the
proof of Theorem~\ref{thm:mainresult}, we have been able to treat even
and odd values of $n$ simultaneously, and have avoided technical
manipulations of irreducible characters of the symmetric group.

The enumerative result was later interpreted in terms of matrix models
arising in the study of $2$--dimensional quantum gravity in
\cite{JPV}, and extended in \cite{JV-Eulerian} to a form equivalent to
Theorem~\ref{thm:mainresult}, although the authors were unaware of the
random matrix interpretation of this extension.  The generating series
they considered are effectively the cumulant generating functions for
the $\GUE{}_n$ and $\LUE{}_n^{(a)}$ densities, \eqref{eqn:GUEdensity}
and \eqref{eqn:laguerredensity}, taken to have functional dependence
on $n$ and $a$, although they did not interpret them in this way.  In
fact, the combinatorial interpretation for the cumulant generating
function for $\LUE{}_n^{(a)}$ can only be established directly when
$a$ is a non-negative integer, since $m=n+a$ is to be interpreted as
the cardinality of a set of colors.  The functional identity, however,
applies only when $a=\pm\frac12$.  This disconnect is resolved by
applying the observation that both the generating series for maps, and
the moments of the bi-diagonal model of the Laguerre ensemble,
\eqref{eqn:laguerredensity}, depend polynomially on $a$.

\goodbreak

\subsection{Observing the Decomposition}

Theorem~\ref{thm:mainresult} shows that it is impossible to
distinguish between the singular values of an $n\times{n}$ \GUE{} and
a mixture of the singular values of two anti-\GUE{} of appropriate
sizes.  A natural question is how this can be observed numerically,
and whether this kind of decomposition has a signature that can be
used to identify (or discount) related decompositions.  As we saw in
Examples~\ref{exam:mu2} and~\ref{exam:mu-sing-is-separable}, the
product structure is not visible directly for the pdf of the
$\GUE_2$, but only on a desymmetrized transform.

\definecolor{listingbackground}{gray}{.9}
\begin{lstlisting}[
    float, 
    language=Octave, 
    morekeywords={nchoosek}, 
    caption={MATLAB code for partitioning the singular values of $\GUE_n$},%
    label=lst:UnmixingSingularValues,%
    mathescape=true,
    captionpos=t,     % The caption is placed above the listing.
    frame=tlbr,       % Frame the listing on the [t]op, [l]eft, [b]ottom, and [r]ight
    frameround=,  % using rounded corners at all 4 corners
    columns=flexible, % fixed-width looks nicer, but the resulting pdf
                      % has spaces inserted between characters of
                      % words, and is unsuitable as a source for
                      % copy/paste.
    backgroundcolor=\color{listingbackground},
    commentstyle=\color{OliveGreen},
    keywordstyle=\color{Blue},
    % numberstyle=\tiny\color{Gray},  % Using line numbers creats a
    % numbers=left,                   % pdf from which source code
    % xleftmargin=.05\textwidth,      % cannot be copied easily
    stringstyle=\color{Purple},
    basicstyle=\footnotesize,
]
% We probabilistically un-mix singular values sampled from the nxn GUE
% to produce samples distributed as the union of two Laguerre ensembles.

% t = number of samples, n = order of GUE
t = 100000; n = 7; outlist = zeros(t,n);

% We need a list of all partitions of {1,2,...,n} into two balanced parts.
% We generate all subsets of size floor(n/2) and their complements.

cbinom = nchoosek(n,floor(n/2));      % - a central binomial coefficient
parta = nchoosek([1:n],floor(n/2));   % - subsets of {1,2,...,n}
partb = zeros(cbinom,ceil(n/2));      % - their complements
for prep = 1:cbinom
    partb(prep,:) = setdiff([1:n],parta(prep,:));
end
partitions = [parta partb];
P = zeros(1,cbinom);

for rep = 1:t;
    % Sample singular values from the GUE of appropriate size
    G = randn(n)+i*randn(n); A = (G+G')/2; eiglist = sort(abs(eig(A)));

    % We'll need the differences of the squares of the eigenvalues
    singdiffs = (eiglist.^2*ones(1,n) - (eiglist.^2*ones(1,n))');

    % The n eigenvalues can be partitioned in binom(n,floor(n/2)) ways.
    % Compute the relative densities with common factors ommited. 
    for prep = 1:cbinom
	P(prep) = (abs(prod(eiglist(parta(prep,:))))/prod(prod(singdiffs(parta(prep,:),partb(prep,:)))))^2;
    end;

    % Separate the singular values with each partition occuring proportionally to its density
    outlist(rep,:) = eiglist(partitions(find(cumsum(P)>rand*sum(P),1),:))';
end

figure(1); hist(reshape(outlist(:,1:3),1,[]),100);
figure(2); hist(reshape(outlist(:,4:7),1,[]),100);$\mskip200mu\raisebox{3.2in}[0pt][0pt]{\begin{minipage}{2in}\includegraphics[width=2in]{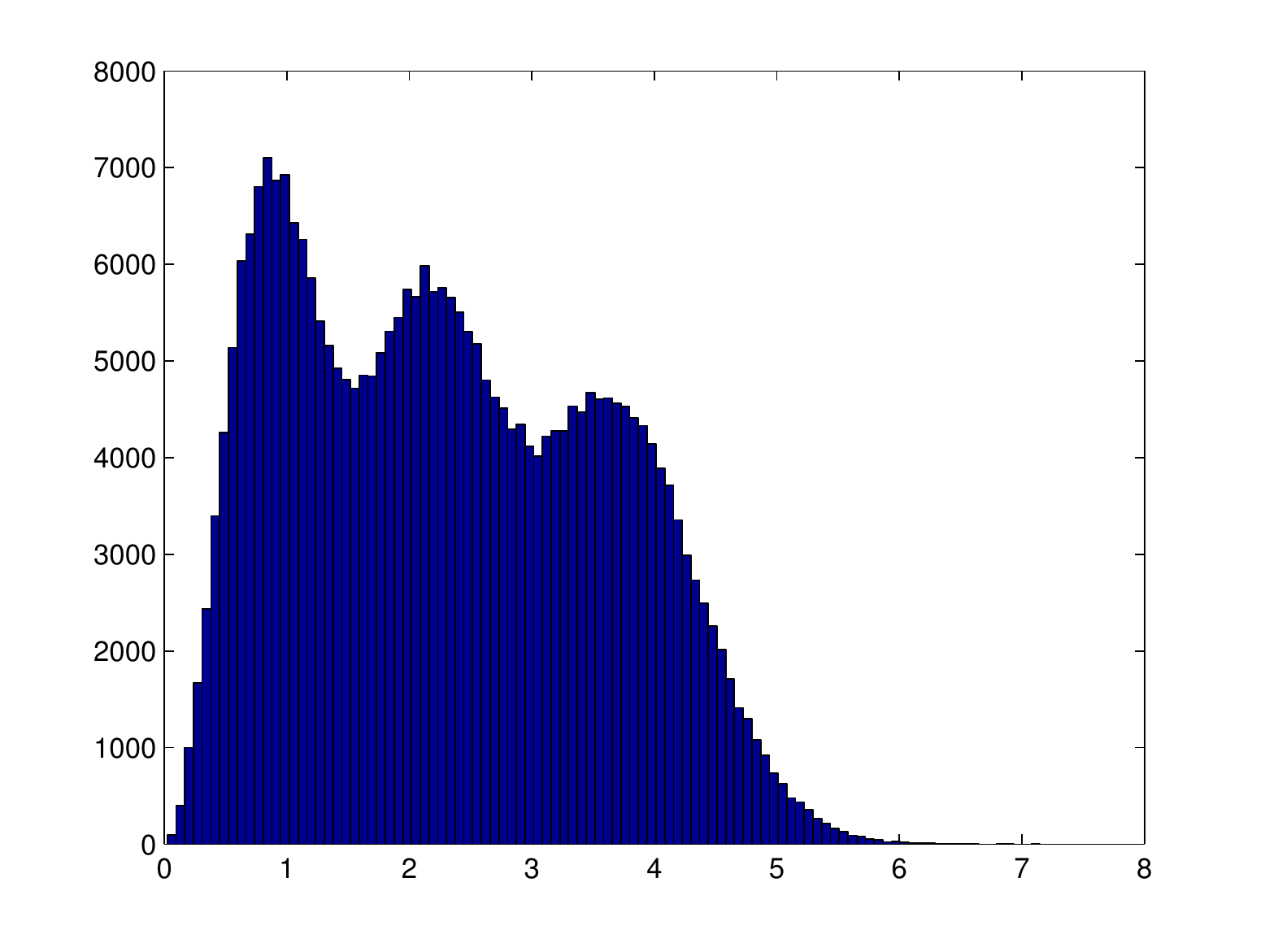}\\[-2.5ex]~Figure~M1\\[3ex]\includegraphics[width=2in]{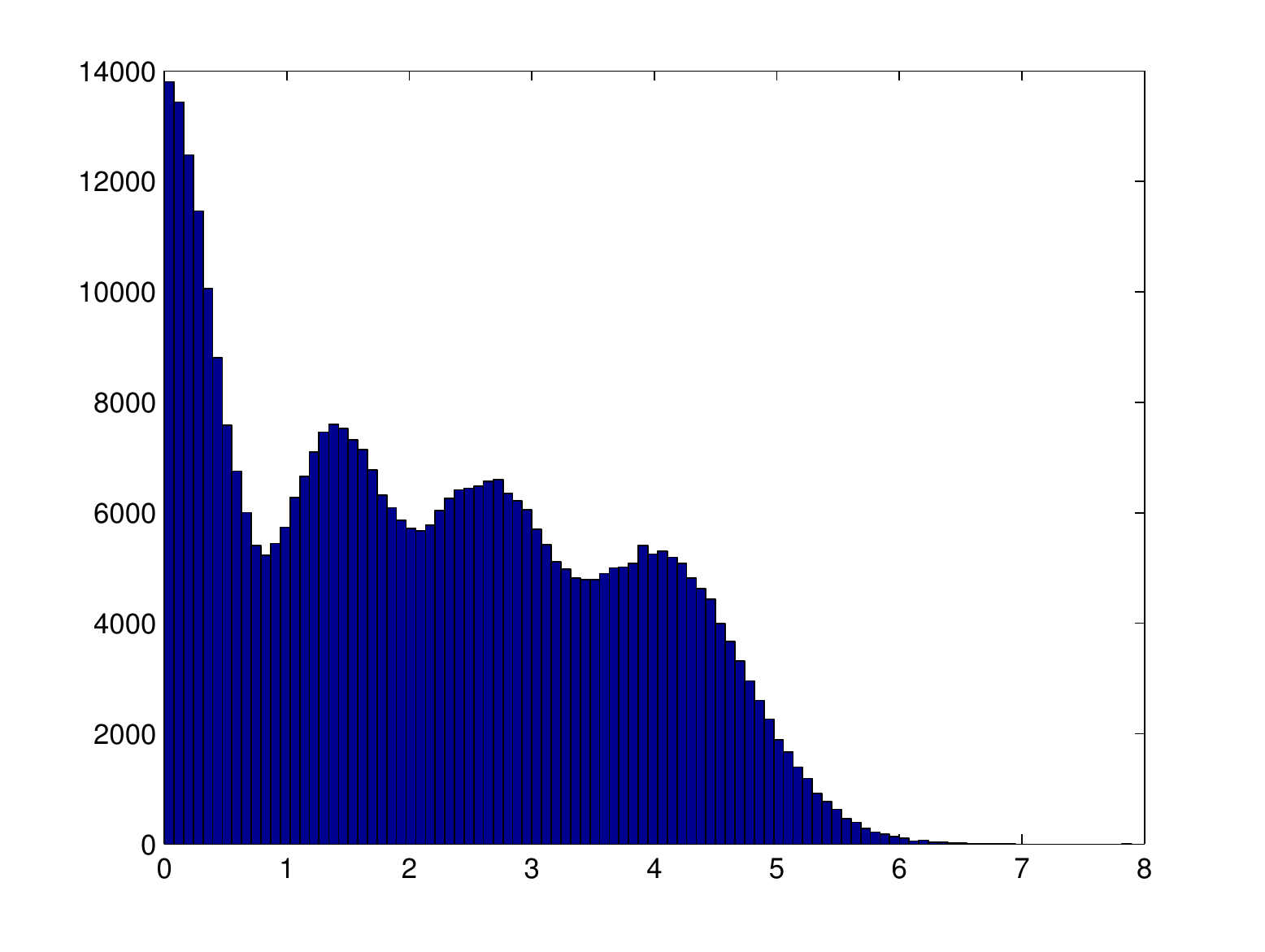}\\[-2.5ex]~Figure~M2\end{minipage}}$
\end{lstlisting}

It appears to be an interesting algorithmic question in general to
determine when a collection of data can be partitioned into two (or
more) independent sets.  In the particular case of the \GUE{}, we know
the exact distributions of the factors, and can thus use conditional
probabilities to generate two independent sets from their union.
Listing~\ref{lst:UnmixingSingularValues} provides a sample of MATLAB
code that demonstrates this.  After a precomputation of the matrices
\texttt{parta}, \texttt{partb}, and \texttt{partitions} holding a list
of appropriately sized subsets and their complements taken from
$\{1,2,\dotsc,n\}$, the main loop generates $t$ samples of the
singular values of $\GUE_n$.  For each sample, relative probabilities
are computed that this data arose from each of the
$\binom{n}{\floor{n/2}}$ partitions of the values between
anti-$\GUE_n$ and anti-$\GUE_{n+1}$ factors.  Choosing a random
partition with weight proportional to these probabilities allows us to
partition the values into the columns of \texttt{outlist}, such that
the first $\floor{n/2}$ columns are distributed as the positive
singular values of an anti-$\GUE_{n}$ and the final $\ceil{n/2}$ are
independently distributed as the positive singular values of an
anti-$\GUE_{n+1}$.  The embedded figures, M1 and M2 show histograms
approximating the level densities for the parts when $n=7$, and should
be compared with the theoretical level densities of
$\LUE_{3}^{(+1/2)}$ and $\LUE_{4}^{(-1/2)}$ from
Figure~\ref{fig:LagPlots}.

\goodbreak

\section{Applications}\label{sec:applications}

The structure exhibited by Theorem~\ref{thm:mainresult} provides new
explanations for existing observations about the distribution of the
determinant and extreme singular values of the \GUE{}.  We do not
believe they have previously been recognized as consequences of the
same underlying structure.

\subsection{The Determinant of the \GUE{}}\label{sec:determinant}

While the determinant of the \GUE{} depends on the signs of
eigenvalues, its absolute value does not.  The bi-diagonal model for
the singular values of the \GUE{}, given in \eqref{eqn:GUEbidiag}, and
illustrated in Figure~\ref{fig:staircasedet} permits us to write it as
a product of independent $\chi$-distributed random variables.  We
obtain directly the expected values of even powers of the determinants
of \GUE{} matrices, and by invoking duality between $k$ and $n$ when
computing the expected value of $\det{M^k}$ for $M\sim\GUE_n$ we
obtain expected values of odd powers as well.  A second consequence is
a direct explanation for the asymptotic log-normality of the absolute
value of large \GUE{} matrices, which had been previously concluded
\emph{via} technical computations in~\cite{D-LC} and~\cite{TV}.

\begin{figure}[!bp]
\includegraphics[width=\textwidth]{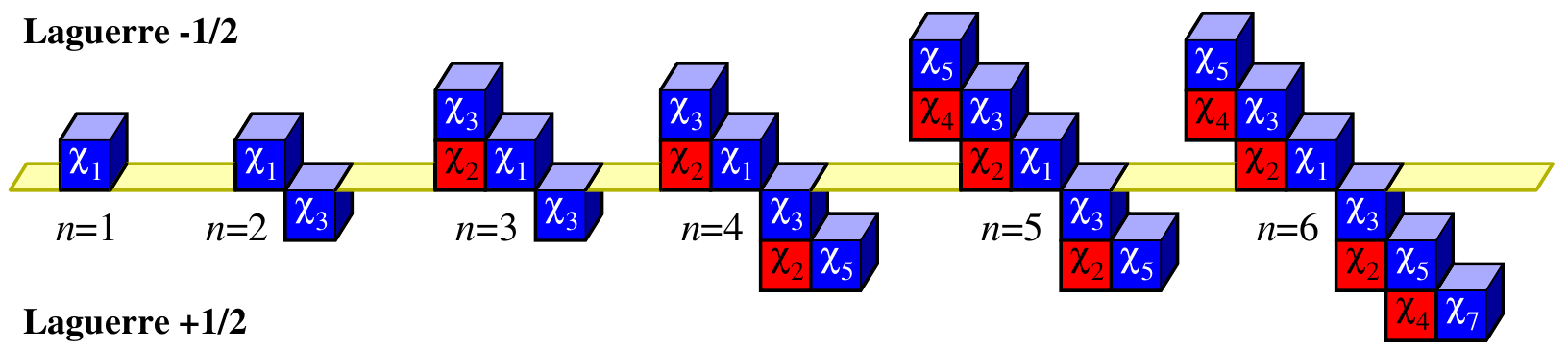}
\caption{Bi-diagonal models for the singular values of the $\GUE{}_n$.
  Only diagonal entries (white on \textcolor{Blue}{blue}) contribute
  to the absolute value of the determinant.}\label{fig:staircasedet}
\end{figure}

The distribution of the determinant of the \GUE{} was previously
studied by Mehta and Normand \cite{MeNo}, who examined the Mellin
transform of its even and odd parts, using sparsity to write each as a
product of determinants.  The distribution of the absolute value of
the determinant corresponds to the transform of the even part, and
using our decomposition we can quickly re-derive this part of their
conclusion: although Mehta and Normand did not interpret the factors
probabilistically, they are the Mellin transforms of
$\chi$-distributed random variables, corresponding to the diagonal
entries of our bi-diagonal model for the singular values of the
\GUE{}.  We do not presently have a corresponding decomposition to
describe the Mellin transform of the odd part of the pdf, but odd
moments of the determinant of $\GUE{}_n$ are either zero for reasons
of symmetry when $n$ is odd, or are recoverable using a duality that
reverses the roles of the order of a matrix and the power of its
determinant.

To describe the distribution of the absolute value (or even powers) of
the determinant of the \GUE{}, it is sufficient to describe the
distribution of the products of the singular values.  Using the
bi-diagonal representation, \eqref{eqn:GUEbidiag}, the product of the
singular values is identified with the product of the independently
$\chi$-distributed diagonal entries of these matrices (see
Figure~\ref{fig:staircasedet}).  The following theorem is an immediate
consequence.

\begin{thm}\label{thm:absdetGUE}
  For $M\sim\GUE_n$, the absolute value of the determinant of $M$ is
  distributed as the product of independent random variables
  $\prod_{i=1}^n X_i$ where $X_i\sim\chi_{2\floor[ ]{\frac{i}{2}}+1}$.
\end{thm}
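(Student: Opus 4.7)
The plan is to combine the bi-diagonal representation \eqref{eqn:GUEbidiag} with the elementary observation that the absolute value of the determinant of a square bi-diagonal matrix equals the product of its diagonal entries. Everything else is already in place from Theorem~\ref{thm:mainresult} and the preceding discussion, so the remaining work is essentially bookkeeping.

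First I would write $|\det M| = \sigma_1(M)\sigma_2(M)\cdots\sigma_n(M)$, which holds for any square matrix, and so it suffices to study the product of the singular values of $\GUE_n$. By Theorem~\ref{thm:mainresult}, this multiset is distributed as the disjoint union of the singular values of the two independent bi-diagonal blocks appearing in \eqref{eqn:GUEbidiag}; the full product therefore factors as the product of $|\det|$ of the two blocks, and these two factors are independent.

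Next, since each block is a square bi-diagonal matrix, its determinant is the product of its diagonal entries, and those entries are mutually independent $\chi$-distributed random variables by construction of the Laguerre bi-diagonal model. Reading off from \eqref{eqn:GUEbidiag}, the first block (of size $\lceil n/2\rceil$) contributes diagonal entries $\chi_{n_1}, \chi_{n_1-2}, \ldots, \chi_3, \chi_1$ with $n_1 = 2\lceil n/2\rceil - 1$, while the second block (of size $\lfloor n/2\rfloor$) contributes $\chi_{n_2}, \chi_{n_2-2}, \ldots, \chi_5, \chi_3$ with $n_2 = 2\lfloor n/2\rfloor + 1$. Merging these two lists and sorting by the subscript yields the sequence $\chi_1, \chi_3, \chi_3, \chi_5, \chi_5, \chi_7, \chi_7, \ldots$, which is exactly the prescription $X_i \sim \chi_{2\lfloor i/2\rfloor + 1}$ for $i=1,2,\ldots,n$.

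There is no genuine obstacle here; the only care needed is to verify that the merged list has the correct length and multiplicities for both parities of $n$, which is a one-line case check using the explicit values of $n_1$ and $n_2$. The required independence follows at two levels: between the two bi-diagonal blocks (via Theorem~\ref{thm:mainresult}) and among the $\chi$ entries within each block (via the Laguerre bi-diagonal model of \eqref{eqn:bi-diagonalform} and its anti-\GUE{} reformulation).
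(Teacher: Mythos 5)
Your proposal is correct and takes essentially the same route as the paper: both identify $|\det M|$ with the product of the $\GUE_n$ singular values, invoke the bi-diagonal model \eqref{eqn:GUEbidiag} (whose two blocks are lower triangular, so the product of singular values of each block is the product of its $\chi$-distributed diagonal entries), and then merge the two diagonals to read off the indices $\chi_{2\lfloor i/2\rfloor+1}$. The paper compresses this into "an immediate consequence" of \eqref{eqn:GUEbidiag} and Figure~\ref{fig:staircasedet}; your write-up supplies the same steps with the bookkeeping made explicit.
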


From Theorem~\ref{thm:absdetGUE} we obtain immediate expressions for
expected values of even moments of the determinant in terms of the
moments of $\chi$-distributed random variables, which are not
altered by considering products of singular values instead of
eigenvalues.  Symmetry implies that odd moments of determinants of odd
order $\GUE$ vanish, and it remains only to determine the expected
values of odd moments of \GUE{} of even orders.  For this we invoke
the following Lemma, which is a special case of a duality principal
described by Dumitriu in \cite[Theorem~8.5.3]{Du-Thesis}, where a more
general result was derived using the machinery of symmetric function
theory (see \cite{Macdonald, Stanley-Jack}), based on the observation
that powers of the determinant can be expressed in terms of
evaluations of Jack symmetric functions.

\begin{lemma}[{Dumitriu \cite[part of Theorem~8.5.3]{Du-Thesis}}]\label{lem:DumitriuDuality} If
  $n$ and $k$ are positive integers, at least one of which is even, then
  \[
  \E{\det(M_n^k)}=(-1)^{nk/2}\E{\det(M_k^n)},
  \]
  where $M_n\sim\GUE_n$ and $M_k\sim\GUE_k$.
\end{lemma}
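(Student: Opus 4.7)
The plan is to evaluate each side via the Andr\'eief (Heine) identity, reducing both expectations to Hankel determinants of Gaussian moments, and then match the resulting product formulas after interchanging $n$ and $k$.

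First, I apply the \GUE{} eigenvalue density \eqref{eqn:GUEdensity}. Absorbing $\prod x_i^k$ into one copy of the Vandermonde determinant inside $\prod_{i<j}\abs{x_i-x_j}^2=\det(x_j^{i-1})\det(x_j^{i-1})$, Andr\'eief's identity yields
\[
\E{\det(M_n^k)} \;=\; \frac{1}{(2\pi)^{n/2}\prod_{j=0}^{n-1}j!}\,\det\bigl(m_{i+j+k-2}\bigr)_{i,j=1}^{n},
\]
where $m_r=\int_{\mathbb{R}} x^{r}\,e^{-x^{2}/2}\,dx$ vanishes for odd $r$ and equals $\sqrt{2\pi}\,(r-1)!!$ for even $r$. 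The analogous formula for $\E{\det(M_k^n)}$ has the roles of $n$ and $k$ interchanged.

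Next, the hypothesis that $nk$ is even forces both Hankel matrices to exhibit a checkerboard sparsity: the $(i,j)$ entry is nonzero only when $i+j$ has a prescribed parity. Simultaneously permuting rows and columns to group indices of like parity, exactly the device used in the proof of Theorem~\ref{thm:mainresult}, converts each matrix into a direct sum of two dense Hankel blocks whose entries are even Gaussian moments $m_{2r}=2^{r+1/2}\Gamma(r+1/2)$. After the substitution $y=x^{2}$, each block is a Gram determinant for a Laguerre-type weight with parameter $\pm\tfrac{1}{2}$, so it evaluates to an explicit product of $\Gamma$-values by the classical Selberg--Mehta formula.

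Finally, I compare the two sides. After the normalizations $(2\pi)^{n/2}\prod_{j=0}^{n-1}j!$ and $(2\pi)^{k/2}\prod_{j=0}^{k-1}j!$ cancel against the $\Gamma$-products produced by the blocks, the two expressions agree up to the sign introduced by the row/column permutations. The principal obstacle is a careful combinatorial count verifying that this total sign equals $(-1)^{nk/2}$. When $k$ is even both sides are manifestly positive, the permutations are even on each side, and the identity reduces to a symmetric $\Gamma$-identity---consistent with Theorem~\ref{thm:absdetGUE}'s expression $\E{\det M_n^k}=\prod_{i=1}^{n}\E{X_i^{k}}$ for $X_i\sim\chi_{2\floor{i/2}+1}$. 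When $k$ is odd and $n$ is even, the sparsity pattern is anti-diagonal rather than block-diagonal, and the order-reversing column permutations required to produce the two dense blocks contribute the requisite extra factor, which a direct count shows to be $(-1)^{nk/2}$.
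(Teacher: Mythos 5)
The paper does not actually prove Lemma~\ref{lem:DumitriuDuality}; it cites Dumitriu's thesis, where a more general duality is established \emph{via} symmetric function theory (expressing powers of the determinant as Jack polynomial evaluations). Your Andr\'eief/Hankel-determinant strategy is therefore a genuinely different route, and it is closer in spirit to the rest of the paper's machinery: it reuses the checkerboard-sparsity device from the proof of Theorem~\ref{thm:mainresult} and stays in the language of moment determinants rather than symmetric functions.

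That said, the proposal has a gap precisely where the work lies. You correctly reduce both sides to normalized Hankel determinants $\det\bigl(m_{i+j+k-2}\bigr)_{i,j=1}^{n}\big/\big((2\pi)^{n/2}\prod_{j=0}^{n-1}j!\big)$, and you correctly identify the block structure obtained by grouping indices by parity. But the two facts that actually constitute the proof are only asserted, not demonstrated: (i) that after evaluating each block the resulting products of $\Gamma$-values on the two sides coincide, and (ii) that the net sign from the permutations is $(-1)^{nk/2}$. Claim (ii) is checkable in a line --- for $n$ even, $k$ odd, exposing $\begin{pmatrix}0&A\\ B&0\end{pmatrix}$ with $p=n/2$ square blocks contributes $(-1)^p$, and $(-1)^{n/2}=(-1)^{nk/2}$ since $k$ is odd --- but claim (i) is the substance of the duality and must be exhibited as a $\Gamma$-identity, not hoped for; as written the argument essentially restates the conclusion after a change of variables. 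Two smaller points. The checkerboard sparsity is present for \emph{all} $n,k$ (odd Gaussian moments always vanish); the parity hypothesis is what makes the off-diagonal blocks square and makes $(-1)^{nk/2}$ well-defined. And your appeal to Theorem~\ref{thm:absdetGUE} for the case $k$ even is legitimate (its proof does not use the duality), but even there the symmetry in $n\leftrightarrow k$ is not ``manifest'': one needs to observe that $\prod_{a=1}^{n/2}\prod_{j=1}^{k/2}(2a+2j-3)$ and $\prod_{a=1}^{n/2}\prod_{j=1}^{k/2}(2a+2j-1)$ are each individually symmetric in the roles of $a$ and $j$. Supply those computations and the proof stands.
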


We thus obtain division-free expressions for the moments of the
determinant of the \GUE{}.  These moments were previously given by
Mehta and Normand (for a different choice of normalization) in
\cite{MeNo} as products of ratios of $\Gamma$-functions, and with a
more direct derivation by Andrews~\emph{et al.} in
\cite[Theorem~1]{AGJ} as products of ratios of factorials.  In both
cases, the moments were listed in four cases, depending on the
parities of both $n$ and $k$.

\begin{cor}\label{cor:detmoments}
  The expected values of powers of the determinant of the $M\sim\GUE_n$ are
  given by:
  \begin{equation}
    \E{\det{M^k}}=\left\{\begin{array}{cl}
    \displaystyle\prod_{i=1}^{n}\prod_{j=1}^{k/2}\Big(2\floor[\Big]{\frac{i}{2}}+2j-1\Big)
    &\text{if $k$ is even}\\
    (-1)^{nk/2}\displaystyle\prod_{i=1}^{k}\prod_{j=1}^{n/2}\Big(2\floor[\Big]{\frac{i}{2}}+2j-1\Big)
    &\text{if $n$ is even}\\
    0&\text{if $n$ and $k$ are both odd.}
    \end{array}\right.
  \end{equation}
\end{cor}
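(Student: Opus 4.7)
The plan is to split the corollary into its three cases and treat each using a different combination of the two preceding results. The heart of the argument is the chi-moment calculation, which translates Theorem~\ref{thm:absdetGUE} into a closed-form product.

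First I would dispose of the case that both $n$ and $k$ are odd. Since the density of the \GUE{} is invariant under $M\mapsto -M$, and $\det(-M)=(-1)^n\det M$, the change of variables gives
\[
\E[\GUE_n]{\det M^k} = (-1)^{nk}\,\E[\GUE_n]{\det M^k},
\]
which forces the expectation to vanish whenever $nk$ is odd. This is a one-line argument that does not require the decomposition at all.

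Next I would handle the case of even $k$. For $k$ even, $\det M^k=\abs{\det M}^k$, so no sign information is lost. By Theorem~\ref{thm:absdetGUE}, $\abs{\det M}$ has the same distribution as $\prod_{i=1}^n X_i$ with independent $X_i\sim\chi_{2\floor{i/2}+1}$. Independence factors the expectation as $\prod_{i=1}^n\E{X_i^k}$, and I only need the standard identity for even moments of the chi distribution: writing $k=2s$ and $m=2\floor{i/2}+1$, one has $\E{\chi_m^{2s}}=m(m+2)\cdots(m+2s-2)$, which after substitution gives exactly $\prod_{j=1}^{k/2}\bigl(2\floor{i/2}+2j-1\bigr)$. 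Multiplying over $i$ yields the first case of the corollary.

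Finally, the case of odd $k$ with $n$ even is reduced to the previous case by Lemma~\ref{lem:DumitriuDuality}: since $n$ is even, the hypothesis of the lemma is satisfied, and
\[
\E[\GUE_n]{\det M^k} = (-1)^{nk/2}\,\E[\GUE_k]{\det M^n}.
\]
The inner expectation involves $\det M^n$ for $M\sim\GUE_k$, where now $n$ is even, which was handled above. Substituting its closed form produces the second case of the corollary (with the roles of $n$ and $k$ swapped inside the product, matching the stated formula). The main obstacle, such as it is, is purely bookkeeping: one must verify that the duality sign $(-1)^{nk/2}$ is an integer (ensured by $n$ even) and that the index ranges in the double product transform correctly when interchanging $n$ and $k$. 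No new analysis is required beyond the chi-moment identity and the two preceding results.
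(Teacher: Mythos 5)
Your proposal is correct and matches the paper's own route exactly: even $k$ by Theorem~\ref{thm:absdetGUE} together with the standard even $\chi$-moment identity, the doubly-odd case by the $M\mapsto -M$ symmetry of the ensemble, and the remaining case (odd $k$, even $n$) via Lemma~\ref{lem:DumitriuDuality} reducing to the even-exponent case. The chi-moment computation $\E{\chi_m^{2s}}=m(m+2)\cdots(m+2s-2)$ and the index bookkeeping you flag are exactly the content the paper leaves implicit.
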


\begin{table}
\caption{For $M\sim\GUE_n$, we write $\E{\det(M^k)}$ as
  a product of odd integers.}\label{tab:detmoments}\vskip-2ex
\hspace*{\fill}\includegraphics[]{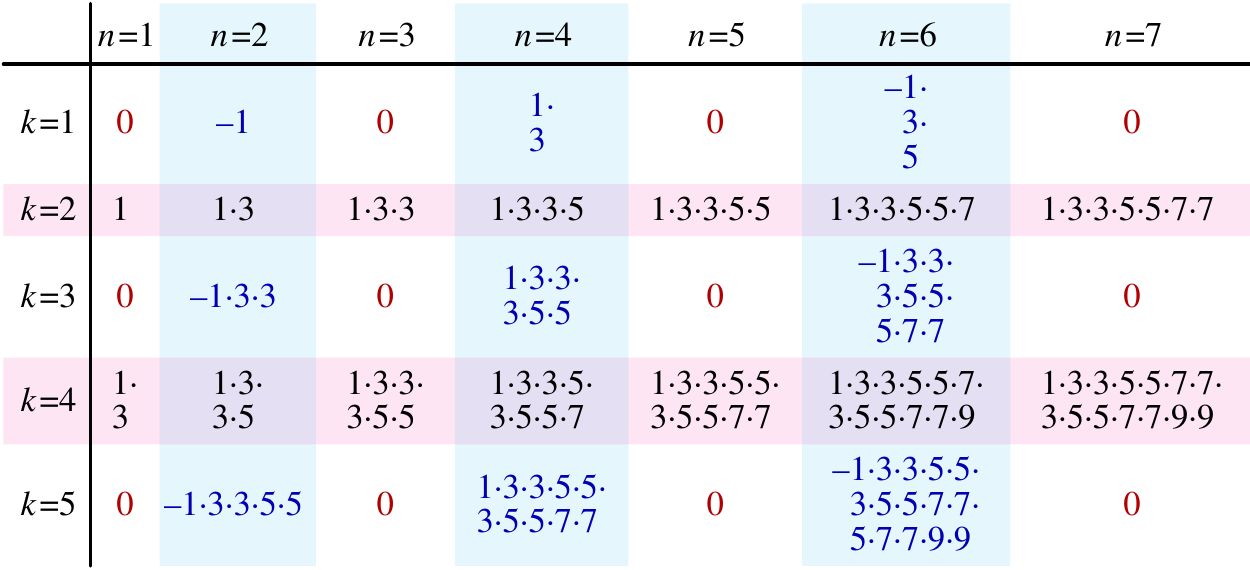}\hspace{\fill}\mbox{}
\end{table}

\begin{rem}
  When $n$ and $k$ are both even, Corollary~\ref{cor:detmoments}
  provides two valid formulae.  For $M\sim\GUE_6$ the first
  formula gives
  $\E{\det{M^4}}=(1\cdot3\cdot3\cdot5\cdot5\cdot7)(3\cdot5\cdot5\cdot7\cdot7\cdot9)$,
  which interpolates between $n=5$ and $n=7$ entries of the $k=4$ row
  of Table~\ref{tab:detmoments}, while the second formula presents the
  factorization in the form
  $(1\cdot3\cdot3\cdot5)(3\cdot5\cdot5\cdot7)(5\cdot7\cdot7\cdot9)$,
  which interpolates between $k=3$ and $k=5$ entries of the $n=6$
  column of the same table.
\end{rem}

\begin{rem}
  A consequence of Corollary~\ref{cor:detmoments} is that the expected
  value of the $2k$--th power of the determinant of a matrix sampled
  from the $n\times{}n$ \GUE{} is a product of odd integers, none of
  which exceeds $2\floor{\frac{n}{2}}+2k-1$.  These factorizations are
  given explicitly in Table~\ref{tab:detmoments}.  This can be
  considered a signature of the fact that the singular values of the
  \GUE{} have a bi-diagonal model with independent $\chi$-distributed
  diagonal entries.  By contrast, moments of the determinants of
  \GOE{} and \GSE{} matrices of even order can involve
  larger prime factors.  For example,
  $\E[\GOE_2]{\det(M^6)}=(3^2)(5^2)(167)$ and
  $\E[\GSE_4]{\det(M^6)}=(3^6)(5^2)(7^2)(11)(347)$, with appropriate scaling.
\end{rem}

In \cite{D-LC}, Delannay and Le~Ca{\"e}r used the Mellin transform of
the distribution of the determinant of the \GUE{} from \cite{MeNo} to
establish the asymptotic log-normality of its absolute value, and
presented analogous computations for the \GOE{}.  Tao and Vu gave new
parallel derivations of the distributions of $\log\abs{\det(A)}$ when
$A$ is distributed as $\GUE_n$ or $\GOE_n$, and proved asymptotic
normality in both cases (\cite[Theorem~4]{TV}).  They speculated that
normality cannot be explained as a consequence of the existence of
underlying independent random variables.  In fact the logarithms of
the diagonal entries of our bi-diagonal model provide such variables.
This is exactly analogous to the case of Wishart matrices, analyzed by
Goodman in \cite{Goodman}, but here the $\chi$-distributed factors
occur only with odd degrees of freedom.  Properties of logarithms of
$\chi$-distributed random variables give the \GUE{} half of
\cite[Theorem~4]{TV} as a corollary.

\begin{cor}[{Tao and Vu \cite[part of
    Theorem~4]{TV}}]\label{cor:GUElogCLT}
  With $M_n\sim\GUE_n$ we have the central limit theorem,
  \vskip-5ex
  \[
  \frac{\log\abs{\det{M_n}}-\frac{1}{2}\log{n!}+\frac{1}{4}\log{}n}{\sqrt{\frac{1}{2}\log{}n}}
  \mathrel{\overset{\mathrm{d}}{\to}}N(0,1)
  \]
  where $\overset{\mathrm{d}}{\to}$ denotes
  convergence in distribution.
\end{cor}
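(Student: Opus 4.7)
The plan is to use Theorem~\ref{thm:absdetGUE} to realize $\log\abs{\det M_n}$ as an explicit sum of independent random variables, and then apply the Lyapunov central limit theorem, verifying that the centering and scaling agree with the stated form.

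By Theorem~\ref{thm:absdetGUE}, $\log\abs{\det M_n}=\sum_{i=1}^{n}\log X_i$ where the $X_i$ are independent and $X_i\sim\chi_{k_i}$ with $k_i\coloneqq 2\floor{i/2}+1$. For $X\sim\chi_k$, differentiating the Mellin transform $\E{X^s}=2^{s/2}\Gamma((k+s)/2)/\Gamma(k/2)$ at $s=0$ gives $\E{\log X}=\tfrac{1}{2}(\log 2+\psi(k/2))$ and $\Var(\log X)=\tfrac{1}{4}\psi'(k/2)$, where $\psi$ and $\psi'$ denote the digamma and trigamma functions. These are the only single-variable facts needed.

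Next I would compute the mean and variance asymptotics. The multiset $\{k_i\}_{i=1}^n$ consists of the odd integers from $1$ up to $2\floor{n/2}+1$, each appearing twice except for the endpoints which each appear once, so $\sum_i\psi(k_i/2)$ and $\sum_i\psi'(k_i/2)$ reduce to sums over half-integer arguments. Using the expansions $\psi(x)=\log x-1/(2x)+O(x^{-2})$ and $\psi'(x)=1/x+O(x^{-2})$, together with the identity $\Gamma(N+\tfrac{1}{2})=\sqrt{\pi}(2N)!/(4^N N!)$ and Stirling's formula, I would establish
\begin{align*}
\E{\log\abs{\det M_n}} &= \tfrac{1}{2}\log n!-\tfrac{1}{4}\log n+O(1),\\
\Var(\log\abs{\det M_n}) &= \tfrac{1}{2}\log n+O(1).
\end{align*}

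Finally I would apply the Lyapunov CLT with exponent $\delta=1$ to the centered summands $Y_i\coloneqq\log X_i-\E{\log X_i}$. Each has $\E{\abs{Y_i}^3}=O(\psi'(k_i/2)^{3/2})=O(k_i^{-3/2})$ for large $k_i$ and is bounded for the finitely many small $k_i$, so $\sum_i\E{\abs{Y_i}^3}=O(1)$ whereas $s_n^3\sim(\tfrac{1}{2}\log n)^{3/2}\to\infty$; hence Lyapunov's condition holds. Replacing the true mean by the prescribed centering $\tfrac{1}{2}\log n!-\tfrac{1}{4}\log n$ introduces only an $O(1)=o(\sqrt{\log n})$ shift, which vanishes after dividing by $\sqrt{\tfrac{1}{2}\log n}$.

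I expect the main obstacle to be pinning down the coefficient $-\tfrac{1}{4}$ of $\log n$ in the mean asymptotic. This requires extracting Stirling's $\tfrac{1}{2}\log n$ correction at the right order from the half-integer sum $\sum_{m}\log(m+\tfrac{1}{2})=\log((2N)!/(4^{N}N!))$ and checking that it cancels cleanly against the corresponding correction in $\tfrac{1}{2}\log n!$ up to a bounded discrepancy. The variance asymptotic reduces to the elementary estimate $\sum_{j=1}^{N}1/(2j-1)=\tfrac{1}{2}\log N+O(1)$, and the Lyapunov check is essentially immediate given the $1/k$ decay of $\Var(\log\chi_k)$.
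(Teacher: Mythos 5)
Your proposal takes essentially the same route as the paper: both decompose $\log\abs{\det M_n}$ via Theorem~\ref{thm:absdetGUE} into a sum of independent $\log\chi$ variables, express the mean and variance through the digamma and trigamma functions, verify the stated centering and scaling asymptotics, and conclude by checking a Lyapunov condition. The only cosmetic differences are that the paper evaluates the polygamma sums in closed form rather than by asymptotic expansion, and checks Lyapunov with fourth central moments ($\delta=2$, for which it has an explicit polygamma formula) whereas you use absolute third moments ($\delta=1$) bounded via $\E{\abs{Y_i}^3}\le(\E{Y_i^4})^{3/4}$ --- both are correct.
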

\begin{proof}[Proof sketch]
    From Theorem~\ref{thm:absdetGUE}, if $M_n\sim\GUE_n$, then
    $\log\abs{\det(M_n)}\sim\sum_{i=1}^n\log\abs{X_i}$, is a sum of $n$
    independent random variables with $X_i\sim\chi_{2\floor[ ]{\frac{i}{2}}+1}$.
    Now the expected value of the logarithm of a $\chi_k$-distributed
    random variable, $X$, is given by
    \[  
    \mu_k\coloneqq\E{\log(X)}=\frac{2^{1-k/2}}{\Gamma(k/2)}
    \int_0^\infty \log(x) x^{k-1}\mathrm{e}^{-x^2/2}\;\mathrm{d}x =
    \frac{1}{2}\psi\Big(\frac{k}{2}\Big)+\frac{1}{2}\log2
    \]
    where $\psi(x)\coloneqq\frac{\mathrm{d}}{\mathrm{d}x}\ln\Gamma(x)=\frac{\Gamma'(x)}{\Gamma(x)}$ is
    the digamma function, which satisfies the summation formula
    \[
      \sum_{l=0}^{n-1}\psi\Big(l+\frac{1}{2}\Big)
      =\Big(n-\frac{1}{2}\Big)\psi\Big(n+\frac12\Big)-\frac{\gamma}{2}-\log2-n.
    \]
    Applying the summation to the even and odd terms of our expression
    for $\log\abs{\det(M_n)}$ we conclude 
    \[
    \E{\log\abs{\det{}M_n}}=\frac{n}{2}\psi
    \Big(\ceil[\Big]{\frac{n}{2}}+\frac{1}{2}\Big)
    +\frac{n}{2}\log{2}-\ceil[\Big]{\frac{n}{2}}=\frac{1}{2}\log\frac{n!}{\sqrt{2\pi{}n}}+O(n^{-1}),
    \]
    and hence that
    \[
    \lim_{n\to\infty}\E{\log\abs{\det{M_n}}}
    -\frac{1}{2}\log{}n!+\frac{1}{4}\log{}n=-\frac{1}{4}\log2-\frac{1}{4}\log{\pi}
    =-0.459469\dots
    .
    \]
    Similarly, the variance of the logarithm of a $\chi_k$-distributed
    random variable, $X$, is given by
    \[
    \sigma_k^2\coloneqq\E{(\log{}X-\mu_k)^2}=\frac{1}{4}\psi_1\Big(\frac{k}{2}\Big)
    ,
    \]
    where
    $\psi_1(x)\coloneqq\frac{\mathrm{d}}{\mathrm{d}x}\psi(x)$ is
    the trigamma function, which satisfies the recurrence
    \[
    \sum_{l=0}^{n-1}\psi_1\Big(l+\frac12\Big)=\Big(n-\frac12\Big)\psi_1\Big(n+\frac12\Big)
    +\psi\Big(n+\frac{1}{2}\Big)+\gamma+\frac{\pi^2}{4}+2\log2.
    \]
    Again summing over even and odd terms, we get
    \begin{align*}
    \Var\big[\log\abs{\det{}M_n}\big]&=
    \frac{n}{4}\psi_1\Big(\ceil[\Big]{\frac{n}{2}}+\frac{1}{2}\Big)
    +\frac{1}{2}\psi\Big(\ceil[\Big]{\frac{n}{2}}+\frac{1}{2}\Big)+\frac{\gamma}{2}+\log2\\
    &=\frac{1}{2}\log{}n+\frac{1}{2}(\gamma+1+\log2)+O(n^{-2}),
    \end{align*}
    and obtain the limit
    \[
    \lim_{n\to\infty}\Var\big[\log\abs{\det{M_n}}\big]-\frac{1}{2}\log{n}=
    \frac{1}{2}(\gamma+1+\log2)=1.1351814\dots.
    \]
    Asymptotic normality, and the stated central limit theorom, follow
    by checking the Lyapunov condition for fourth moments.  Letting
    $\beta_k$ denote the fourth central moment of a $\chi_k$
    distributed random variable, $X$, we have
    \[
    \beta_k=\E{(\log
      X-\mu_k)^4}=\frac{3}{16}\psi_1\Big(\frac{k}{2}\Big)^2+\frac{1}{16}\psi_3\Big(\frac{k}{2}\Big).
    \]
    Since the polygamma function $\psi_3(x)=\psi'''(x)$ satisfies
    the bound $\psi_3(x)\leq\psi_1(x)\leq1$ when $x\geq2$, we conclude
    that with $s_n^2=\Var\big[\log\abs{\det{M_n}}\big]$
    \[
    \sum_{l=1}^n\beta_{2\floor[ ]{\frac{l}{2}}+1}=O(s_n^2)
    \qquad\text{so}\qquad
    \lim_{n\to\infty}\frac{1}{s_n^4}\sum_{l=1}^{n}\beta_{2\floor{\frac{l}{2}+1}}=0.\qedhere
    \]
\end{proof}


Andrews \emph{et al.} also provide relatively compact expressions for
the determinant of the \GOE{} for even order matrices as
\cite[Eq~(23)]{AGJ} and odd order matrices as \cite[Eq~(24)]{AGJ},
with less compact forms derived by Delannay and Le~Ca{\"e}r in
\cite{D-LC} and summarized by Mehta as \cite[Eq.~(26.5.11),
Eq.~(26.6.15), and Eq.~(26.6.16)]{Mehta}.  After observing numerically
that the moments for odd order matrices have only small prime factors,
we identified the following theorem.


\begin{thm}\label{thm:oddGOEdet}
  For $M\sim\GOE_{2n+1}$, the determinant of $M$ has the same moments
  as the product of independent random variables
  $\sqrt{2}\,X\prod_{i=1}^{n} Y_i$ where $X\sim{}N(0,1)$ and
  $Y_{i}\sim\chi_{2i+1}^{2}$.
\end{thm}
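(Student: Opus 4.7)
The proof plan is to match moments. Both sides have symmetric distributions --- the right-hand side because $X$ is symmetric, the left-hand side because the density of $\GOE_{2n+1}$ is invariant under $M\mapsto-M$ --- so all odd moments vanish on both sides and it is enough to verify that the $(2k)$-th moments agree for every positive integer $k$.

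For the right-hand side, independence together with the standard moment formulas for normal and chi-squared random variables writes $\E{\bigl(\sqrt{2}\,X\prod_{i=1}^{n}Y_i\bigr)^{2k}}$ as the product of one double-factorial factor $(2k-1)!!$ contributed by $X$, a power of $2$, and $n$ Gamma-ratio factors of the form $\Gamma(i+\tfrac12+2k)/\Gamma(i+\tfrac12)$ contributed by the $Y_i$. Each Gamma ratio expands as a product of $2k$ consecutive odd integers, so the whole expression is naturally a product of odd-integer factors of the form $(2i+2j-1)$, indexed by $1\leq i\leq n$ and $1\leq j\leq 2k$, multiplied by $(2k-1)!!$ and a power of $2$.

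For the left-hand side, I would invoke the closed-form expression for $\E[\GOE_{2n+1}]{\det(M)^{2k}}$ provided by Andrews~\emph{et al.}\ in \cite[Eq.~(24)]{AGJ} (equivalently recoverable from the Mellin-transform analysis of Mehta and Normand \cite{MeNo}), adjust for the normalization difference between their $\GOE{}$ and the one used in this paper, and rearrange the resulting half-integer Gamma ratios to reproduce the product above factor by factor. The numerical observation cited just above the theorem --- that these moments have only small prime factors --- is what makes the rearrangement feasible and is the reason the theorem was conjectured in the first place.

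The main obstacle is purely bookkeeping: reconciling the two normalization conventions for the $\GOE{}$, and correctly attributing the lone $\sqrt{2}\,X$ factor, which contributes one ``extra'' odd double factorial and is what forces the identification to hold in odd rather than even order. A more structural but considerably harder alternative would be to seek a $\beta=1$ analogue of Theorem~\ref{thm:mainresult} in which $\sqrt{2}\,X$ plays the role of an unpaired eigenvalue accompanying $n$ paired chi-squared contributions; the absence of a bi-diagonal model for the $\GOE{}$ (the Dumitriu--Edelman tridiagonal form does not factor the determinant) makes this route much more delicate, and it is not needed for the moment-level claim actually stated.
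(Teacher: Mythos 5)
Your proposal is correct and follows essentially the same route as the paper's proof: reduce to even moments by symmetry, invoke \cite[Eq.~(24)]{AGJ} for $\E[\GOE_{2n+1}]{\det(M^{2u})}$, and match the resulting double-factorial product factor by factor against $2^u(2u-1)!!$ from $\sqrt{2}\,X$ and $\prod_{j=0}^{2u-1}(2i+1+2j)$ from each $Y_i\sim\chi^2_{2i+1}$. The normalization reconciliation you flag as a potential obstacle is a non-issue here (the paper's convention matches AGJ's formula as stated), and the ``power of $2$'' you attribute to the Gamma ratios in fact gets absorbed into the odd-integer factors $2(i+\tfrac12+j)=2i+2j+1$, so the only surviving $2^u$ is the one supplied by $(\sqrt{2})^{2u}$.
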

\begin{proof}
  The distributions of the determinant and our purported product
  are both symmetric about zero, so it is sufficient to show that even
  moments agree.  Beginning with 
  \cite[Eq.~(24)]{AGJ},
  \[
  \E[\GOE_{2n+1}]{\det{M^{2u}}}=2^u(2u-1)!!\prod_{j=1}^{2u}\frac{(2n+2j-1)!!}{(2j-1)!!},
  \]
  we recognize $(2u-1)!!$ as the expected value of $X^{2u}$ for
  $X\sim{}N(0,1)$.  By rewriting the product as
  \[
  \prod_{j=1}^{2u}\frac{(2n+2j-1)!!}{(2j-1)!!}=\prod_{i=1}^{n}\bigg(\prod_{k=0}^{2u-1}(2i+1+2k)\bigg)
  \]
  we obtain a division-free expression for the moments of the
  determinant, and observe that the factors are the expected values of
  $Y_i^{2u}$ for $Y_i\sim\chi_{2i+1}^2$.
\end{proof}

\begin{rem}
  In \cite{Bornemann-LaCroix-GOE}, Bornemann and La$\;$Croix give a
  direct interpretation for the factors in
  Theorem~\ref{thm:oddGOEdet}.  A corresponding description of the
  distribution of the modulus of the determinant for even order
  matrices then allowed them to derive the central limit
  theorem for the \GOE{} analog to Corollary~\ref{cor:GUElogCLT}.
\end{rem}


\subsection{Extreme Singular Values and the Condition Number of the
  \GUE{}}\label{sec:extreme}

We use the condition number of a matrix to motivate the study of the
distributions of the largest and smallest singular values of the
\GUE{}.  From Theorem~\ref{thm:mainresult}, these are related to
singular values of Laguerre ensembles with parameters $\pm1/2$.  This
perspective unifies some 
asymptotic results and also
suggests the need for additional special functions for describing
the singular value of Laguerre ensembles with
non-integer parameters.  Of particular note, the smallest singular
value of the \GUE{}, which is associated with the bulk scaling limit,
is described in terms of the smallest singular values of Laguerre
ensembles, which are associated with hard-edge limits, giving the
\GUE{} a sort of virtual hard edge.

The \emph{condition number} of a matrix predicts stability in
numerical linear algebra, and is given by the ratio of its largest and
smallest singular values.  In practice, fluctuations of the largest
singular value are small, and the distribution of the condition number
can be approximated by considering only the smallest singular value
(see the first author's analysis of the corresponding problem for
Laguerre ensembles in \cite{Edelman-condition}).  For the purpose of
analyzing the condition number, the signs of eigenvalues introduce
noise which we can ignore by partitioning the singular values of the
\GUE{} according to Theorem~\ref{thm:mainresult}.  We can then analyze
both the smallest and largest singular values as functions of
independent quantities.  This makes the product structures of the
eigenvalue counting functions in the limits into extensions of
corresponding finite factorizations.

The earliest results we have identified along these lines involve gap
probabilities in the bulk scaling limit of the \GUE{}.  The
probability that the smallest singular value is at least $s$ is also
the probability that there are no eigenvalues between $-s$ and $s$.
In the bulk-scaling limit, this is known as the \emph{gap
  probability}, and, by translation invariance, becomes independent of
the particular interval chosen.  A more general problem is to describe
the eigenvalue counting function, $E_2(k;s)$, giving the probability
that a random interval of length $2s$ contains precisely $k$
eigenvalues.  The corresponding problem for the \GOE{} was given a
Fredholm determinantal representation by Gaudin in
\cite{Gaudin1961447}.  This was adapted to the \CUE{} by Dyson in
\cite{Dyson-STELCSIII}, and extended by Mehta and des~Cloizeaux in
\cite{MehClo} to encompass $k\neq0$.  Bornemann provides an excellent
summary of the computational implications of this and related results
in \cite{Bornemann} where the following appears as his equation~(5.7)
\begin{equation}\label{eqn:bulksum}
  E_2(k;s)=\sum_{j=0}^{k}E_+(j;s)E_-(k-j;s). 
\end{equation}
Note that $E_+$ and $E_-$ are not themselves eigenvalue counting
functions, but defined instead in terms of the decomposition of the
sine kernel by its orthogonal actions on even and odd functions.  In
fact, the right side of \eqref{eqn:bulksum} can also be interpreted as
a limit of counting functions for singular values.  When we showed
this to Bornemann, he observed that the finite version can be obtained
using kernel methods parallel to the derivation of the limiting case,
and that this is essentially the content of Forrester's observations
about the \GUE{} and related ensembles in \cite{Forrester-Evenness}.

Most of the required observations are already present in
\cite[Ch.~6]{Mehta}, where Mehta notes the characteristic checkerboard
sparsity pattern as the reason that the determinantal representation
of $E_2(0;s)$ factors.  This factorization is not a consequence of the
relationship between the bulk-scaling limits of the \GUE{} and \CUE{}
(such a factorization is somewhat less surprising for the \CUE{},
because of properties of compact Lie groups, as explored by Rains
\cite{Rains-Powers, Rains-power-images}).  In fact a similar
factorization occurs for \GUE{} of finite order, and we recognize the
factors as corresponding to the complementary cdfs of the smallest
singular values of the two component anti-\GUE{}s implicit in
Theorem~\ref{thm:mainresult}.

An initial analysis of extreme singular values is harmed by the
inclination to partition eigenvalues according to sign.  The largest
singular value is then a function of the largest and smallest
eigenvalues, while the smallest singular values is determined by the
least positive and greatest negative eigenvalues.  For asymptotically
large \GUE{} matrices, the largest and smallest eigenvalues are at
soft-edges, and their distributions are described by the Tracy-Widom
law.  In the large $n$ limit, these two soft-edges become independent,
and the cumulative distribution function for the largest singular
value factors as the product of cumulative distribution functions for
the two edges.  Our intuition suggests that for moderately-sized
matrices, the largest and smallest eigenvalues are essentially
independent, so we expect a near factorization for large but finite
$n$.  To interpret \eqref{eqn:bulksum} as a limit we need to change
perspectives.  Instead of two identical distributions (those of the
largest and smallest eigenvalues) becoming independent, the finite
version involves two independent distributions becoming identical (the
soft edges of $\LUE^{(+1/2)}$ and $\LUE^{(-1/2)}$).

By contrast, the smallest singular value is either the least
positive eigenvalue, or the greatest negative eigenvalue, and these
two quantities do not become independent for large matrices.  This
makes the factorization of the complementary cdf of the smallest
singular value much more surprising.


The symmetry between smallest and largest singular values is most
easily phrased in terms of singular value counting functions.  We
denote these by $S$, and define them as analogs to the $E$ discussed
previously.  For a subset $J\subseteq\mathbb{R}_+$, we let
$S_2^{(n)}(k,J)$ denote the probability that exactly $k$ singular
values of $\GUE_n$ lie in the set $J$.  Similarly, we let
$S_{\LUE_{\pm}}^{(n)}(k;J)$ denote the corresponding probabilities
that exactly $k$ singular values of the bi-diagonal matrix modeling
the $\LUE_n^{(\pm1/2)}$, recall \eqref{eqn:bi-diagonalform}, lie in
$J$.  Note that $S_{\LUE_-}$ and $S_{\LUE_+}$ can also be considered
as the even and odd elements of a one-parameter family, since we could
have defined them equivalently by relating $S_{\LUE_-}^{(n)}$ to the
singular values of the order $2n$ anti-$\GUE{}$ and $S_{\LUE_+}^{(n)}$
to the order $2n+1$ anti-$\GUE{}$.  With this notation, the following
corollary to Theorem~\ref{thm:mainresult}, equivalent forms of which
were considered by Forrester in \cite{Forrester-Evenness}, is
immediate.

\begin{cor}\label{cor:singcount} For any measurable subset $J\subseteq\mathbb{R}_+$, the
  singular value counting functions of $\GUE_n$ can be expressed in
  terms of $\LUE$ counting functions \emph{via}
\begin{equation}\label{eqn:singcount}
  S_2^{(n)}(k;J)=\sum_{j=0}^k S_{\LUE_{+}}^{(n_1)}(j;J)
  S_{\LUE_{-}}^{(n_2)}(k-j;J),
\end{equation}
where $n_1=\floor[ ]{\frac{n}{2}}$ and $n_2=\ceil[ ]{\frac{n}{2}}$.
\end{cor}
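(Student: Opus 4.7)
The plan is to read this corollary as a direct bookkeeping consequence of Theorem~\ref{thm:mainresult}, combined with the identification between anti-\GUE{} and \LUE{} singular values established in Section~\ref{sec:antiGUE}. Fix a measurable $J\subseteq\mathbb{R}_+$ and let $N_J$ denote the number of singular values of $\GUE_n$ that lie in $J$, so that $S_2^{(n)}(k;J)=\Pr(N_J=k)$.

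First, I would invoke Theorem~\ref{thm:mainresult} to couple $N_J$ with the decomposition $N_J=N^{(n)}_J+N^{(n+1)}_J$, where $N^{(n)}_J$ and $N^{(n+1)}_J$ count the distinct non-zero singular values of two independent anti-\GUE{} matrices of orders $n$ and $n+1$ that fall in $J$. The crucial content supplied by the theorem is not the decomposition itself but the \emph{independence} of the two summands; this is precisely what turns the pmf of $N_J$ into the convolution of the pmfs of $N^{(n)}_J$ and $N^{(n+1)}_J$.

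Next, I would translate each anti-\GUE{} count into an \LUE{} count using the bi-diagonal models of Section~\ref{sec:antiGUE}: the non-zero singular values of the anti-$\GUE{}_{2m+r}$ agree in distribution with the singular values of a bi-diagonal $\LUE_m^{(r-1/2)}$ matrix, and since $J\subseteq\mathbb{R}_+$ this identification preserves the count in $J$. A short parity check (for even $n$, anti-$\GUE_n\leftrightarrow\LUE_{n/2}^{(-1/2)}$ and anti-$\GUE_{n+1}\leftrightarrow\LUE_{n/2}^{(+1/2)}$; for odd $n$, the $\pm$ roles swap and the sizes shift by one) confirms that in either parity the two factors appear with exactly the indices $n_1=\floor{n/2}$ and $n_2=\ceil{n/2}$ advertised in the statement, with $n_1$ attached to the $\LUE_+$ count and $n_2$ to the $\LUE_-$ count.

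Combining these steps gives the claimed identity. There is no genuine analytic obstacle here: once Theorem~\ref{thm:mainresult} is in hand, the only step requiring care is the parity bookkeeping in the correspondence between anti-\GUE{} orders and $\LUE^{(\pm 1/2)}$ indices, and this is purely mechanical.
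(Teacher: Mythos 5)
Your proposal is correct and is essentially the paper's own argument: the paper declares the corollary ``immediate'' from Theorem~\ref{thm:mainresult} once the notation identifying $S_{\LUE_-}^{(m)}$ with anti-$\GUE_{2m}$ and $S_{\LUE_+}^{(m)}$ with anti-$\GUE_{2m+1}$ is in place, and what you have supplied is precisely that: the independence of the two anti-\GUE{} factors turns the counting distribution of the union into a convolution, and the parity check (even $n$ gives anti-$\GUE_n\leftrightarrow\LUE_{n/2}^{(-1/2)}$, anti-$\GUE_{n+1}\leftrightarrow\LUE_{n/2}^{(+1/2)}$; odd $n$ swaps the signs and shifts the sizes) matches the advertised indices $n_1=\floor{n/2}$ and $n_2=\ceil{n/2}$. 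No gap.
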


\begin{rem}
  The similarity between \eqref{eqn:bulksum} and \eqref{eqn:singcount}
  is intentional, but also slightly overstated.  On passing to limits,
  the r\^oles of the subscripts `$+$' and `$-$' are reversed.  The
  Laguerre $+1/2$ ensembles are associated with the action of the sine
  kernel on odd functions, which correspond to the $E_-$ factors in
  \eqref{eqn:bulksum}.  Similarly, on passing to
  the limit, the $S_-$ factors in \eqref{eqn:singcount} correspond to
  $E_+$ factors in \eqref{eqn:bulksum}.
\end{rem}

Forrester summarizes these results into a generating series identity
encompassing all $k$ simultaneously.  We take the opposite emphasis,
and believe that via their relationship to condition numbers, the two
particular cases $k=0$ and $k=n$ are of special interest.  In these
cases, the sum consists of a single non-zero term, so the result takes
the form of a factorization.

%

\begin{exam}
  Suppose that $X$, $Y$, and $Z$ are the minimum singular value of
  matrices sampled from $\GUE_7$, and nominal $3\times3.5$ and
  $4\times3.5$ matrices (corresponding to $\LUE_3^{(+1/2)}$ and
  $\LUE_4^{-1/2}$ respectively).  The cumulative distribution function for $X$,
  $Y$, and $Z$ are plotted in Figure~\ref{fig:minimumsing} (top).  By
  specializing \eqref{eqn:singcount} to $n=7$ and $k=1$, we find that
  \[
  1-\mathrm{P}(X\leq{}x)=\big(1-\mathrm{P}(Y\leq{}x)\big)
                         \big(1-\mathrm{P}(Z\leq{x})\big).
  \]
  To observe this graphically, we take logarithms, and note that
  $\log\big(1-\mathrm{P}(X\leq{}x)\big)$ is the average of
  $2\log\big(1-\mathrm{P}(Y\leq{}x)\big)$ and
  $2\log\big(1-\mathrm{P}(Z\leq{x})\big)$, a relationship plotted in
  Figure~\ref{fig:minimumsing} (bottom).
\end{exam}



\begin{figure}
\hspace*{\fill}\includegraphics[width=.9\textwidth]{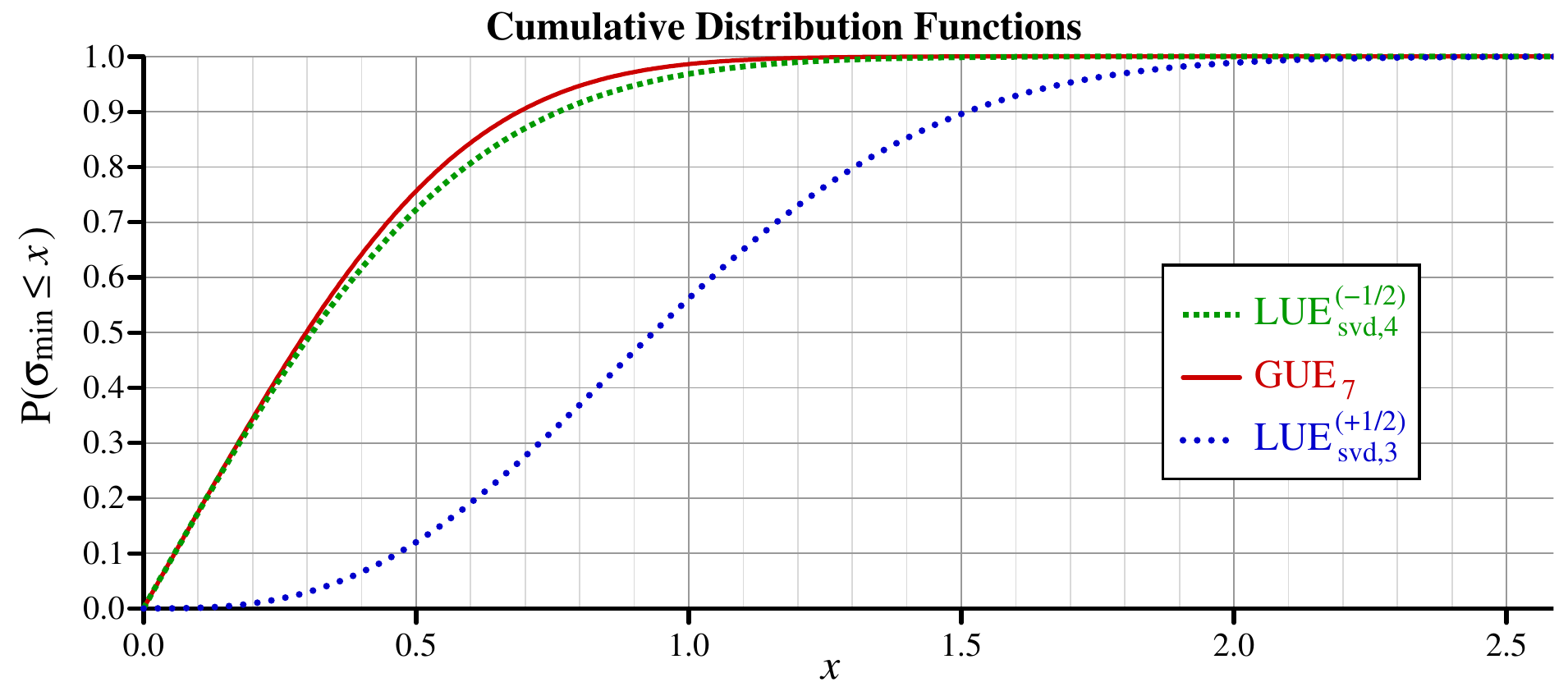}\hspace{\fill}\mbox{}
%
%
\hspace*{\fill}\includegraphics[width=.9\textwidth]{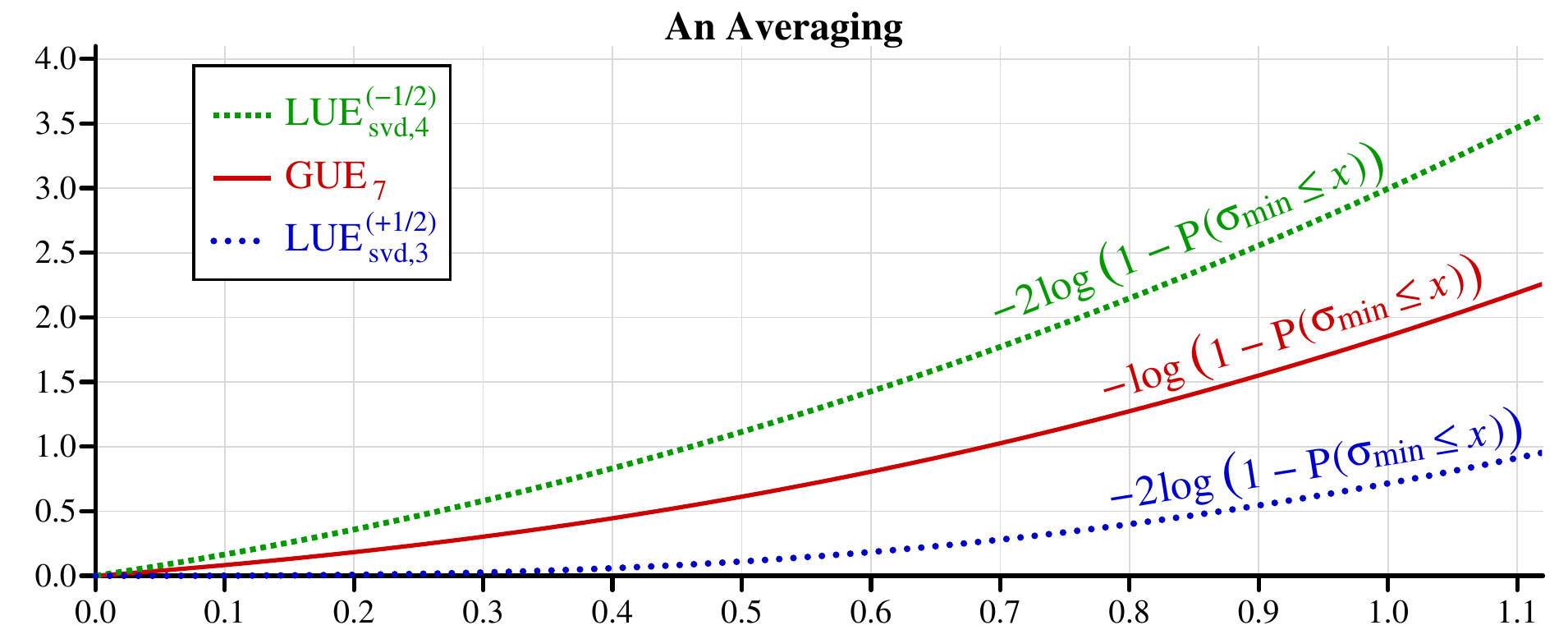}\hspace{\fill}\mbox{}
\caption{The cdfs for $\sigma_{\min}$ of \textcolor{BrickRed}{$\GUE{}_7$},
  \textcolor{OliveGreen}{$\LUE_{\mathrm{svd},4}^{(-1/2)}$}, and
  \textcolor{blue}{$\LUE_{\mathrm{svd},3}^{(+1/2)}$} (top) are related
  via the logarithms of their complements (bottom).
}\label{fig:minimumsing}
\end{figure}

One of the initial aims of our investigation was to describe the
manner in which the distribution of the smallest singular value of the
$\GUE_n$ depends on $n$.  We had hoped that by specializing
\eqref{eqn:singcount} to $k=0$ we could bootstrap from corresponding
descriptions for Laguerre ensembles.  For integral values of $a$, the
distribution of the smallest singular value of $\LUE_n^{(a)}$ is
described by a confluent hypergeometric function of matrix argument
\cite{Du-Thesis, KoEd}.  The series expansions of these functions are,
however, non-convergent when $a$ is not an integer, and we are unsure
what the appropriate analog is in this situation.  Similarly when $a$
is integral, Forrester and Hughes showed in \cite{FoHu} that relevant
probabilities could be computed as $a\times{}a$ determinants of
matrices with generalized Laguerre polynomials as entries.  This
allows a natural generalization to non-integer $n$, but only when $a$
is an integer.  Similarly the recurrence they identified from the
double Wronskian structure of the determinants in not closed in this
case.

The distributions can still be described for all $a$ as determinants
involving special functions.  In particular, a direct computation
gives the probability that there are no singular values less than $s$
in terms of determinants of Hankel matrices of upper incomplete
$\Gamma$-functions.  Letting
\[
F(a,n,s)\coloneqq\det\big(\Gamma(i+j-1+a,s)\big)_{1\leq{}i,j\leq{}n}
\]
where
$\Gamma(s,x)=\int_{x}^{\infty}t^{s-1}\mathrm{e}^{-t}\;\mathrm{d}t$ is
the upper incomplete gamma function, we can integrate the Vandermonde
term by term to see that the complementary cdf for the smallest
singular value of the \GUE{} is
\begin{align*}
  \mathrm{P}_{\GUE_n}(\sigma_{\min}\ge{}s) &= 
  \frac{F\big(-\frac{1}{2},\ceil[ ]{\frac{n}{2}},\frac{s^2}{2}\big)}
       {F\big(-\frac{1}{2},\ceil[ ]{\frac{n}{2}},0\big)} \times 
  \frac{F\big(+\frac{1}{2},\floor[ ]{\frac{n}{2}},\frac{s^2}{2}\big)}
       {F\big(+\frac{1}{2},\floor[ ]{\frac{n}{2}},0\big)}. 
\end{align*}
For small values $n$ these can be evaluated directly, but the matrices
become ill-conditioned as $n$ grows.  These expressions can be
generalized to any $a>1$, but only when $n$ is an integer.

It should be noted that the computation of
$S_2^{(n)}\big(k;(0,s)\big)$ is well-suited to the numerical Fredholm
techniques described by Bornemann in \cite{Bornemann}.  In this
setting, it is unclear under what conditions, if any, it is preferable
to work with the Laguerre factors instead of the \GUE{} expression
directly.

\section{Relationship to Complex Ginibre Ensembles}\label{sec:Ginibre}

We close by noting that Theorem~\ref{thm:mainresult} also has
parallels involving complex Ginibre ensembles (see
\cite[Ch.~15]{Mehta} for a discussion of these ensembles).  A slight
modification of the proof of Theorem~\ref{thm:mainresult} shows that
the magnitudes of the eigenvalues of Ginibre matrices are independent
$\chi$-distributed random variables, each having a different even
number of degrees of freedom.  Furthermore, the phases of the
eigenvalues of sufficiently high powers of such matrices are also
independent.  We speculate that the Ginibre ensembles could be a
bridge to connect our observations to results of Rains involving
powers of compact Lie groups, and to the unitary groups in particular
(see \cite{Rains-Powers, Rains-power-images}), the connection with
which was pointed out to us in discussion with Paul Bourgade.  We
outline some of the relevant properties of Ginibre ensembles, and
sketch some of the reasons that we think they may lie at the center of
the theory.

To give a more concrete motivation for considering Ginibre matrices,
we note that instead decomposing the singular values of the \GUE{}, we
could equally well have decomposed the eigenvalues of the square of
the \GUE{}.  In fact, it is this formulation that most closely matches
the combinatorial identities of Jackson and Visentin \cite{JPV,
  JV-Characters, JV-Eulerian}.  The maps involved are enumerated by
even moments of the \GUE{}, but the relevant maps can also be embedded
injectively into the class of bipartite maps, and when enumerated as
such are naturally related to moments of the complex Ginibre ensemble.
In fact this may be the more natural setting for analyzing map
combinatorics, since the combinatorially related triangulation
conjecture from \cite{JV-Regular, JV-face-colored} can formulated in
terms of Ginibre matrices, but does not appear to have a formulation
in terms of the \GUE{}.

The main result of this section, Theorem~\ref{thm:ginibreindep}, was
previously described by Kostlan in \cite{Kostlan}.  We rederive it
here in a manner intended to emphasize the similarity with
Theorem~\ref{thm:mainresult}.

If no symmetry is imposed on a square matrix with independent complex
Gaussian entries of unit variance (i.e. real and imaginary parts
independently each have variance $\frac12$), then the resulting
eigenvalues are generically complex valued.  The joint eigenvalue
density for such an ensemble was derived by Ginibre, after whom such
matrices are named, in \cite{Ginibre}.  It is supported on
$\mathbb{C}^n$, and is given by
\begin{equation}\label{eqn:ginibredensity}
  p^{\mathrm{G}}_n(z_1,z_2,\dotsc,z_n)=c^{\mathrm{G}}_n {\prod_{1\leq
      i<j\leq{}n}\abs{z_i-z_j}^2} \;\exp\Big(-\frac12\sum_{i=1}^n
  \abs{z_i}^2\Big) \differentials{z}{1}{n},
\end{equation}
which differs from the joint density for the \GUE{} only in its support.
As with Theorem~\ref{thm:mainresult}, we can show that the magnitudes
of the eigenvalues of the Ginibre ensemble are independent, or with a
slight modification that the eigenvalues of large powers of Ginibre
matrices are independent.

As with the \GUE{} it is possible to evaluate expectations of
symmetric functions of the magnitude of eigenvalues by de-symmetrizing
relevant integrals.  In this case, proceeding as with the proof of
Theorem~\ref{thm:mainresult}, and using the fact that
$\abs{z}^2=z\overline{z}$, integrals can be taken against
\[
  n!\,c_n^{\mathrm{G}} \det\begin{pmatrix} 1 & {z^*_2} & {z^*_3}^2 &
  \cdots & {z^*_n}^{n-1} \\ z_1 & z_2{z^*_2} & z_3{z^*_3}^2 & \cdots &
  z_n{z^*_n}^{n-1} \\ z_1^2 & z_2^2{z^*_2} & z_3^2{z^*_3}^2 & \cdots &
  z_n^2{z^*_n}^{n-1} \\ \vdots & \vdots & \vdots & \ddots & \vdots
  \\ z_1^{n-1} & z_2^{n-1}{z^*_2} & z_3^{n-1}{z^*_3}^2 & \cdots &
  z_n^{n-1}{z^*_n}^{n-1}
  \end{pmatrix}
  \;\exp\Big(-\frac12\sum_{i=1}^n z_iz^*_i\Big)
  \differentials{z}{1}{n}.
\]
Switching to polar co-ordinates with the substitutions
$z_j=r_j\mathrm{e}^{\mathrm{i}\theta_j}$ and
$\mathrm{d}z_j=r\,\mathrm{d}r_j\mathrm{d}\theta_j$, each phase
variable occurs only in a single column of the matrix.  Integrating
over the phases annihilates all non-diagonal matrix entries and
shows that we could have integrated against the density
\[
\prod_{i=1}^n
r_i^{2i-1}\mathrm{e}^{-\frac{1}{2}{r_i^2}}\;\mathrm{d}r_i.
\]
By recognizing the factors of this density, we recover a result
previously described by Kostlan.

\begin{thm}[{Kostlan \cite{Kostlan}}]\label{thm:ginibreindep}
  The magnitudes of the eigenvalues of the $n\times{}n$ complex
  Ginibre ensemble have the same distribution as a set of independent
  $\chi$ random variables with $2$, $4$, \dots, $2n$ degrees of
  freedom.
\end{thm}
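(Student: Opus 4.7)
The plan is to imitate the proof of Theorem~\ref{thm:mainresult}, with integration over phases playing the r\^ole that summation over signs played there. Starting from \eqref{eqn:ginibredensity}, I would rewrite the Vandermonde squared as $\abs{\det(z_j^{i-1})}^2$ and, by column-linearity exactly as in Lemma~\ref{lem:nonsymGUE}, desymmetrize to produce a signed measure on ordered $n$-tuples whose identity term is the determinantal expression displayed just before the statement of the theorem. This measure has $(i,j)$-entry $z_j^{i-1}(z_j^*)^{j-1}$ and integrates symmetric observables in the same way as the density $p_n^{\mathrm{G}}$ (up to a factor of $n!$).

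Next, under the polar substitution $z_j = r_j\mathrm{e}^{\mathrm{i}\theta_j}$ with $\mathrm{d}z_j = r_j\,\mathrm{d}r_j\,\mathrm{d}\theta_j$, the determinant expands as
\[
\sum_{\pi \in \mathfrak{S}_n} \mathrm{sgn}(\pi)\prod_{j=1}^n r_j^{\pi(j)+j-2}\,\mathrm{e}^{\mathrm{i}(\pi(j)-j)\theta_j}.
\]
Integrating each $\theta_j$ over $[0,2\pi)$ annihilates every term with $\pi(j)\neq j$ for some $j$, leaving only the identity permutation. Combined with the Gaussian $\mathrm{e}^{-r_j^2/2}$ and the Jacobian, the resulting expression on $(r_1,\dotsc,r_n)\in[0,\infty)^n$ is proportional to $\prod_{j=1}^n r_j^{2j-1}\mathrm{e}^{-r_j^2/2}\,\mathrm{d}r_j$, which factors as a product of $\chi_{2j}$ densities.

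Because expectations of symmetric observables of the magnitudes reduce to integrals against this desymmetrized measure, one concludes that the unordered multiset $\{\abs{z_1},\dotsc,\abs{z_n}\}$ has the same distribution as the unordered multiset of independent samples from $\chi_2,\chi_4,\dotsc,\chi_{2n}$. The main step requiring care is the desymmetrization: producing the particular determinant in which column $j$ uniformly carries the factor $(z_j^*)^{j-1}$ is precisely what makes the subsequent phase integration collapse to the identity permutation; everything after that is mechanical. A secondary subtlety is that the $r_j$ appearing in the desymmetrized measure are not the order statistics of the magnitudes, so the independence conclusion must be stated at the level of unordered $n$-sets.
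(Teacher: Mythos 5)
Your proposal is correct and follows exactly the route the paper takes: desymmetrize the squared Vandermonde into a determinant whose $(i,j)$-entry is $z_j^{i-1}(z_j^*)^{j-1}$, pass to polar coordinates, and observe that integrating out the phases kills all but the identity permutation, leaving the product of $\chi_{2j}$ densities on the radial variables. Your closing remarks about why the column-aligned conjugate powers are the crux, and about the unordered-multiset level at which independence holds, are good articulations of points the paper leaves implicit.
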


From this information, and the fact that the ensemble is invariant
under multiplication by a complex phase, we immediately obtain the
known distribution of the determinant of the Ginibre ensemble, and the
distribution of a randomly chosen eigenvalue.  In particular, the pdf
for the magnitude of a randomly chosen eigenvalue is given by the
average of the densities of $\chi$-distributed random variables (see
Figure~\ref{fig:Ginibreleveldensity}), and takes the form
\[
\frac{x}{n}\frac{\Gamma(n,\frac{x^2}{2})}{\Gamma(n)}.
\]
Paralleling Rains' results about the eigenvalues of compact
Lie groups (\cite{Rains-Powers, Rains-power-images}), a slight
extension shows that for an $n\times{}n$ Ginibre matrix, the phases of
the $k$--th power of the eigenvalues are independent and uniformly
distributed for every $k\geq{}n$.  Together these give a weak form of
the circular law.

\begin{figure}
\hspace*{\fill}%
\includegraphics[width=.95\textwidth]{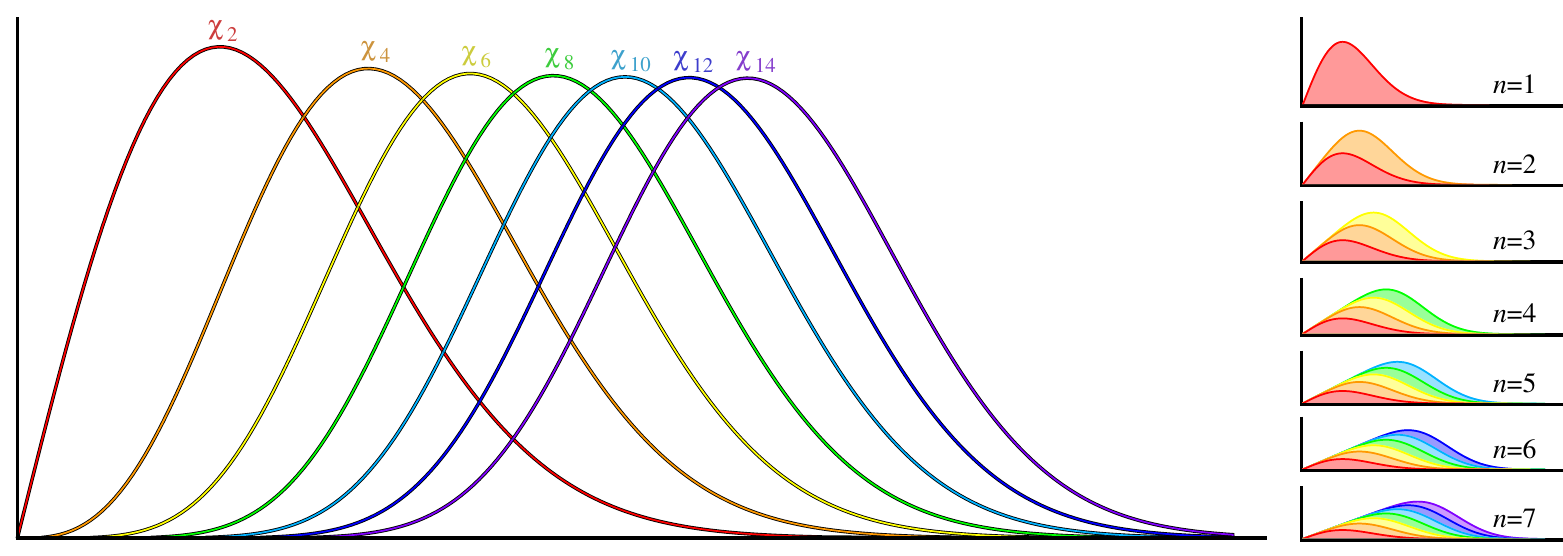}\hspace{\fill}\mbox{}%
\caption{The pdfs of the magnitude of a randomly chosen eigenvalue from
  the $n\times{}n$ complex Ginibre ensemble (right) is the average of
  the pdfs of $\chi$-distributed random variables with consecutive
  even numbers of degrees of freedom (left).}\label{fig:Ginibreleveldensity}
\end{figure}


\section{Related Questions}\label{sec:future}

\begin{itemize}
\item Is there a practical way to identify when a set-valued random
  variable can be generated as a union of independent sets?  In
  particular, given a collection of samples of a set-valued random
  variable can we determine if the same sample distribution can be
  generated as a union of two smaller collections.
\item Are there hidden independencies for the \GOE{} or $\GBE{}$ for
  any $\beta\neq2$?
\item Quantitatively, the absolute value of the determinant of the 
  $n\times n$ \GUE{} can be sampled na\"ively as a function of $n^2$
  independent Gaussian random variables.  Similarly, a $\chi_k$ random
  variable can be sampled as a function of $k$ independent Gaussian
  random variables, so using Theorem~\ref{thm:absdetGUE} we can sample
  the absolute value of the determinant as a function of
  $n^2-\floor{\frac{(n-1)^2}{2}}$ independent Gaussian random
  variables.  This sequence appears in the Online Encyclopedia of
  Integer Sequences as A074148, where a comment suggests that it also
  arises in the context of Cartan decompositions.  To what extent can
  our main theorem be viewed as a relationship between the Unitary
  Group of order $n$ and the Orthogonal Groups of order $n$ and $n+1$?
  To make this question concrete, we have exhibited two ensembles of
  random matrices defined in terms of $n^2$ Gaussian random variables.
  One, the $\GUE_n$, is invariant under conjugation by $U(n)$, while
  the other, anti-$\GUE_{n}\otimes$anti-$\GUE_{n+1}$ is invariant
  under conjugation by $O(n)\times{}O(n+1)$.
\item The decomposition of Theorem~\ref{thm:mainresult} raises the question,
  to what extent can one ensemble be transformed into the other.  It
  would appear that this cannot be accomplished deterministically.
  There is naturally a $2^n\colon\binom{n}{\floor{\frac{n}{2}}}$
  ambiguity.  Any element-wise action would also carry with it an
  implicit mapping between $O(n)\otimes{}O(n+1)$ and $U(n)$.
\end{itemize}

\section*{Acknowledgements}
We would like to thank Folkmar Bornemann, Paul Bourgade, and Peter
Forrester for discussing preliminary versions of our results, and
helping us find new connections with the existing theory.  The second
author would also like to point out that the combinatorial work of
David Jackson and Terry Visentin was essential for inspiring the
present investigation.  We also thank the National Science Foundation for
funding this research through grants DMS--1035400 and  DMS--1016125.

\bibliographystyle{amsplain}
\bibliography{LaCroix}

\end{document}